\documentclass[reqno]{amsart} 
\usepackage{amsfonts, amsmath, amsthm, amssymb, latexsym, graphicx, geometry, xcolor, colortbl,  mathtools}

\def\arsinh{{\rm arsinh}\>}

\theoremstyle{plain}
\newtheorem{theorem}{Theorem}[section]
\newtheorem{corollary}[theorem]{Corollary}
\newtheorem{lemma}[theorem]{Lemma}
\newtheorem{proposition}[theorem]{Proposition}
\theoremstyle{definition}
\newtheorem{definition}[theorem]{Definition}
\newtheorem{remarks}[theorem]{Remarks}

\numberwithin{equation}{section}

\usepackage{amsaddr}

\title[Hua-Pickrell diffusions]{ Hua-Pickrell diffusions and differential equations related with pseudo-Jacobi polynomials}

\author{Martin Auer, Michael Voit} 
\address{Fakult\"at Mathematik, Technische Universit\"at Dortmund,
          Vogelpothsweg 87,
          D-44221 Dortmund, Germany}
\email{martin.auer@math.tu-dortmund.de, michael.voit@math.tu-dortmund.de}

\subjclass[2010]{Primary 60B20; Secondary 60F05, 60F15,  33C45, 60K35,  70F10, 82C22}
\keywords{Cauchy ensembles, $\beta$-Hua-Pickrell diffusions,  pseudo-Jacobi polynomials, Pearson diffusions,
  limits of empirical measures, free probability, freezing limits, semicircle laws, central limit theorem.}

\begin{document}
\date{\today}

\begin{abstract} Following Assiotis (2020), we study general $\beta$-Hua-Pickrell diffusions
  of $N$ particles on $\mathbb R$ as solutions of the stochastic differential equations (SDEs)
  $$dX_{j,t}=\sqrt{2(1+X_{j,t}^2)}\,dB_{j,t}+\beta\left[b-a X_{j,t}+\sum_{l=1,\ldots, N; \> l\neq j}\frac{X_{j,t}X_{l,t}+1}{X_{j,t}-X_{l,t}}\right]dt\,,\;\;
  (j=1,\ldots,N)$$
  with $\beta\ge 1,\> a,b\in\mathbb R$. These processes form a subclass of the Pearson diffusions which are defined as
   solutions of algebraic SDEs where the moments of the empirical distributions
  $\mu_t^N:=\frac{1}{N}\sum_{j=1}^N \delta_{X_{j,t}}$
  can be computed inductively.
  This Pearson class also contains other well known diffusions  like
  Dyson Brownian motions, and multivariate Laguerre and Jacobi processes.

  After the time normalization $t\mapsto t/\beta$, the SDEs above  degenerate  in the frozen case for $\beta=\infty$
  into ordinary differential equations
which are related to pseudo-Jacobi polynomials.
For $N\to\infty$ and under suitable initial conditions,
the empirical distributions $\mu_t^N$ converge weakly almost surely for $t>0$ to some limit which is independent from $\beta\in[1,\infty]$.
For $a=-N, b=0$, we describe the limit explicitly via free convolutions. Moreover, if $a=cN$ for some $c>0$, the solutions of our SDEs  converge
for $t\to\infty$ to  stationary distributions, which are  Hua-Pickrell (or Cauchy) measures. We thus obtain connections
between known results for the empirical distributions of these ensembles and the zeros of the pseudo-Jacobi polynomials.
Furthermore, we derive a freezing central limit theorem for $\beta\to\infty$ for  the Hua-Pickrell ensembles which is  related to these zeros. 
\end{abstract}

\maketitle

\section{Introduction}

In \cite{As3}, Assiotis introduced a quite general class of noncolliding interacting particle systems with $N$ particles on $\mathbb R$ or an interval 
$I\subset \mathbb R$. These models are described by diffusion processes $(X_t:=(X_{1,t},\ldots, X_{N,t}))_{t\ge0}$ with $X_{1,t}< X_{2,t}< \ldots< X_{N,t}$
which satisfy the stochastic differential equations (SDE)
\begin{equation}\label{SDE-Pearson}
  dX_{j,t}=\sqrt{2p(X_{j,t})}\,dB_{j,t}+\beta\Bigl( \frac{1}{2}q( X_{j,t}) + \sum_{l=1,\ldots,N; \> l\neq j}\frac{p(X_{j,t})}{X_{j,t}-X_{l,t}}\Bigr)dt
  \quad \quad(j=1,\ldots,N)
\end{equation}
with a Brownian motion  $(B_t:=(B_{1,t},\ldots, B_{N,t}))_{t\ge0}$, and polynomials $p,q$ with $\operatorname{deg}\> p\le 2$ and  $\operatorname{deg}\> q\le 1$.
These diffusions are called $\beta$-Pearson diffusions, as for $N=1$ the stationary distributions of these processes
(whenever they exist) are Pearson distributions; see Section 19.4 of \cite{C} and \cite{Wo, FS} for the statistical background.
Up to suitable changes of coordinates, 
the classical dynamic random matrix models like Dyson Brownian motions, $\beta$-Laguerre processes and $\beta$-Jacobi processes on compact intervals and on
$I=[0,\infty[$ form examples of such $\beta$-Pearson diffusions. Moreover,
    up to suitable changes of coordinates,  the class of $\beta$-Pearson diffusions just consists of these classical
    classes of examples together mainly with one further
    class of diffusions $(X_t)_{t\ge0}$ which depends on 3 parameters $a,b\in\mathbb R$ and $\beta\ge1$, where the processes satisfy the
Hua-Pickrell SDEs
\begin{equation}\label{eq_SDE_Hua-Pickrell}
  dX_{j,t}=\sqrt{2(1+X_{j,t}^2)}\,dB_{j,t}+\beta\left[b-a X_{j,t}+\sum_{l\colon l\neq j}\frac{X_{j,t}X_{l,t}+1}{X_{j,t}-X_{l,t}}\right]dt
  \quad \quad(j=1,\ldots,N)
\end{equation}
on the interior of the  closed Weyl chambers $$C_N^A:=\{x\in\mathbb R^N:\> x_1\le x_2\le\ldots\le x_N\}$$ for $N\ge2$ and $t>0$
with some  initial conditions $X_0\in C_N^A$.
These diffusions $(X_t)_{t\ge0}$ were studied by Assiotis \cite{As1, As2, As3} mainly for $\beta=2$ and named Hua-Pickrell diffusions.
This name and the concrete parametrization in \eqref{eq_SDE_Hua-Pickrell} are motivated by the fact that for $\beta=2$ and suitable $a,b$,
 the diffusions satisfying \eqref{eq_SDE_Hua-Pickrell} admit
unique invariant probability distributions which are the image measures of Hua-Pickrell measures on the $N$-dimensional torus $\mathbb{T}^N$
under the inverse Cayley transform
$$	(\mathbb{T}\setminus\{-1\})^N\to\mathbb{R}^N, z_j\mapsto i\frac{1-z_j}{1+z_j}\,,\;\;j\in\{1,\dots,N\}$$
in all coordinates;  see \cite{BO, BNR, As1, As2} and references therein for these Hua-Pickrell distributions.

Motivated by the classical dynamic random matrix ensembles  above (see e.g.~\cite{AGZ, AV, AVW, VW1}), we now consider the renormalized diffusions
$(\tilde{X}_t:=X_{2t/\beta})_{t\geq0}$ for $\beta\ge1$, which satisfy 
\begin{equation}\label{eq_SDE_Hua-Pickrell-norm}
  d\tilde X_{j,t}=2\sqrt{(1+\tilde X_{j,t}^2)/\beta}\,dB_{j,t}+2\left[b-a\tilde X_{j,t}+
    \sum_{k\colon k\neq j}\frac{\tilde X_{j,t}\tilde X_{k,t}+1}{\tilde X_{j,t}-\tilde X_{k,t}}\right]dt \quad\quad(j=1,\ldots,N)
\end{equation}
with the same initial condition. For $\beta=\infty$, these SDEs
degenerate into the deterministic ODEs
\begin{equation}\label{eq_ODE_Hua-Pickrell}
	\frac{d}{dt}x_{j,t}=2\left[b-a x_{j,t}+\sum_{k\colon k\neq j}\frac{x_{j,t}x_{k,t}+1}{x_{j,t}-x_{k,t}}\right] \quad\quad  (j=1,\ldots,N)
\end{equation}
with the initial condition $\hat{x}_0\in C_N^A$. Solutions of these initial value problems will be called frozen Hua-Pickrell diffusions.

It is known for the classical Pearson diffusions mentioned above for $1\le \beta<\infty$ (see e.g. \cite{AGZ} for Dyson Brownian motions and \cite{GM} in the other classical cases) and  for $\beta=\infty$ (see \cite{VW2, AVW}) as well as for the  Hua-Pickrell diffusions with $\beta=1,2,4$
by \cite{GM, As2} that the corresponding initial value problems admit unique strong solutions.
The same holds for the general Hua-Pickrell cases
\eqref{eq_SDE_Hua-Pickrell-norm} and \eqref{eq_ODE_Hua-Pickrell} for $\beta\in[1,\infty]$;
we shall prove the following result in Section \ref{appendix} below:

\begin{theorem}\label{lem_HP_exist_unique}
	Let $\beta\in[1,\infty] $ and $a,b\in\mathbb R$.
	Then for any initial value $\hat{x}_0\in C_N^{A}$ there exists  a unique strong solution $(X_t)_{t\geq0}$ of
        \eqref{eq_SDE_Hua-Pickrell-norm} (and thus of \eqref{eq_SDE_Hua-Pickrell}) for $\beta<\infty$ and  of \eqref{eq_ODE_Hua-Pickrell}
        for $\beta=\infty$ respectively. In both cases, the solutions are in the interior of  $C_N^{A}$ almost surely for all $t>0$, i.e., the particle systems are non-colliding.
\end{theorem}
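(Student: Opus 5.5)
The natural route is a change of variables that turns the Hua-Pickrell system into a known Pearson/Jacobi-type system on a bounded set, followed by a Lyapunov (Anderson–Guionnet–Zeitouni style) argument for non-collision. Concretely, I would set $X_{j,t}=\tan\Theta_{j,t}$ with $\Theta_{j,t}\in\left]-\pi/2,\pi/2\right[$. Since $1+\tan^2\theta=\sec^2\theta$ and
$$\frac{\tan\theta_j\tan\theta_l+1}{\tan\theta_j-\tan\theta_l}=\frac{\cos(\theta_j-\theta_l)}{\sin(\theta_j-\theta_l)}=\cot(\theta_j-\theta_l),$$
It\^o's formula turns \eqref{eq_SDE_Hua-Pickrell-norm} into a system for $\Theta$ with constant diffusion coefficient $2/\sqrt\beta$. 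The drift consists of the smooth one-body terms $2(b\cos^2\Theta_j-a\sin\Theta_j\cos\Theta_j)$, an It\^o correction of the form $c_\beta\sin\Theta_j\cos\Theta_j$, and the interaction $2\sum_{l\ne j}\cot(\Theta_j-\Theta_l)$. This is a trigonometric Dyson-type system, and the case $\beta=\infty$ is the corresponding ODE. The ordering $\Theta_1<\dots<\Theta_N$ is preserved as long as no collisions occur.

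\textbf{Step 1: local solutions.} On the open set $U=\{-\pi/2<\theta_1<\dots<\theta_N<\pi/2\}$ the coefficients are locally Lipschitz. Hence there is a unique strong solution up to the exit time $\tau$ from $U$, for interior initial data; the ODE case is handled identically.

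\textbf{Step 2: no exit from $U$.} I would use the Lyapunov function
$$V(\theta)=-\sum_{j<l}\log\sin(\theta_l-\theta_j).$$
The generator applied to $V$ gives two contributions. The interaction part yields $\sum_{j<l}\cot^2(\theta_l-\theta_j)$ with a favorable sign, together with the standard cyclic identity for triple products $\cot x\cot y+\cot y\cot z+\cot z\cot x=1$ when $x+y+z=0$. The Laplacian part contributes $(2/\beta)\sum_{j<l}\csc^2$. For $\beta\ge1$ these combine into a bound $LV\le C$, which is the usual Dyson computation and works even at $\beta=1$. The one-body drift terms are bounded, but multiplied by $\cot$ they are not bounded. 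This is the main obstacle. However, these terms are smooth functions of $\theta_j$, so the difference of drifts is Lipschitz, and I get terms of the form $(f(\theta_l)-f(\theta_j))\cot(\theta_l-\theta_j)$, which are bounded. Thus $LV\le C$, and a standard stopping argument shows that no collision occurs in finite time.

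\textbf{Step 3: boundary $\pm\pi/2$ and conclusion.} It remains to exclude $\Theta_N\to\pi/2$ and $\Theta_1\to-\pi/2$, that is, explosion of $X$. I would work in the $X$-coordinates with the function $\log(1+\sum_j X_j^2)$. Its generator is bounded because the coefficients grow at most linearly. The pair interaction term contributes $\sum_j X_j\cdot\frac{X_jX_l+1}{X_j-X_l}$, which after symmetrization becomes $\sum_{j<l}(X_jX_l+1)$, and this is $O(1+|X|^2)$. This rules out explosion, so $\tau=\infty$ for interior starts. For the ODE I would use the same $V$ and the same norm bound deterministically.

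For initial values on the boundary of $C_N^A$ (coinciding particles), I would follow the approximation method of \cite{AVW,GM}: start from interior points $x_0^{(n)}\to\hat x_0$. A comparison or monotonicity argument shows that the gaps stay ordered and that $V$ becomes finite instantly, since $\int_0^t LV$ is controlled. This yields existence, and pathwise uniqueness follows from Step 1 applied after any positive time. I expect this singular-start case, especially for $\beta=\infty$, to be the most technical part.
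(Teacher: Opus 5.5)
\paragraph{Main gap: collision starts.} The statement covers every $\hat x_0\in C_N^A$, including starting points with coinciding particles, and the paper relies on this later (Lemma~\ref{symmetry-cond} invokes uniqueness for starts in $\partial C_N^A$). This case is a genuine gap in your proposal, for two reasons.

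\emph{Existence and separation.} Your mechanism, ``$V$ becomes finite instantly since $\int_0^t LV$ is controlled'', does not work. For interior approximations $x_0^{(n)}\to\hat x_0$ one has $V(x_0^{(n)})\to\infty$, so the bound $E\,V(X_t)\le V(x_0^{(n)})+Ct$ becomes empty. For $\beta=1$ the two-body part of $LV$ has no singular negative term at all, so nothing in your estimate forces $V$ to come down from $+\infty$. You would also need tightness of the approximating solutions and integrability of $\int_0^t (X_{l,s}-X_{j,s})^{-1}\,ds$ near $s=0$, where the drift is unbounded.

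\emph{Uniqueness.} ``Pathwise uniqueness follows from Step 1 applied after any positive time'' is circular. Two solutions started at the same collision point need not agree at any time $s>0$, so local Lipschitz uniqueness on $U$ cannot be invoked. An estimate reaching down to $t=0$ is needed, also for the ODE when $\beta=\infty$.

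This is exactly what the paper imports from Graczyk--Ma{\l}ecki. Their Theorem 2.2 gives existence, pathwise uniqueness and non-collision for $t>0$ from any point of the closed chamber. The key step uses the semimartingales $V_n=e_n$ of the squared gaps: $V_n$ cannot stay at $0$ on a time interval, since its drift $D_n$ would then have to vanish. The paper's own work is to correct (4.3), (4.6) of Graczyk--Ma{\l}ecki to (4.3'), (4.6'). It then observes that, although $H_{ij}(x,y)=2(1+xy)$ can be negative, $D_n$ is still a sum of non-negative terms built from $\sigma_i^2$ and $4+(\lambda_i+\lambda_k)^2$. Hence that proof applies for all $\beta\in[1,\infty]$.

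\paragraph{Secondary error: the coordinate change.} For $X_j=\tan\Theta_j$, It\^o's formula gives
\[
d\Theta_j=\frac{2}{\sqrt\beta}\cos\Theta_j\,dB_j+\Bigl[2b\cos^2\Theta_j-\Bigl(a+\frac{2}{\beta}\Bigr)\sin(2\Theta_j)+2\cos^2\Theta_j\sum_{l\ne j}\cot(\Theta_j-\Theta_l)\Bigr]dt .
\]
So the diffusion coefficient is not constant; constant diffusion requires $\arsinh$, as in Section 3. Moreover, the interaction carries the particle-dependent weight $\cos^2\Theta_j$, so the identity $\cot x\cot y+\cot y\cot z+\cot z\cot x=1$ does not by itself cancel the three-body terms.

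The bound $LV\le C$ is nevertheless true, and is easiest to check in $X$-coordinates. Since $\sin(\theta_l-\theta_j)=(x_l-x_j)/\sqrt{(1+x_l^2)(1+x_j^2)}$, your $V$ equals $-\sum_{j<l}\log(x_l-x_j)+\frac{N-1}{2}\sum_j\log(1+x_j^2)$. For the first part:
\begin{itemize}
\item the two-body terms give $(\frac{2}{\beta}-2)\sum_{j<l}\frac{2+x_j^2+x_l^2}{(x_l-x_j)^2}+N(N-1)$;
\item the three-body terms give the constant $2\binom{N}{3}$, a second divided difference of $2+sx-x^2$ with $s$ the sum over the triple;
\item the linear drift gives $aN(N-1)$.
\end{itemize}
Finally, $L\sum_j\log(1+x_j^2)$ is bounded.

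With this correction, your Lyapunov argument together with your non-explosion bound is a valid, more self-contained alternative to the paper's route, but only for interior starting points. It does not replace the Graczyk--Ma{\l}ecki argument for starts on $\partial C_N^A$.
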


In this paper we study several aspects of the solutions of the SDEs \eqref{eq_SDE_Hua-Pickrell-norm} for $\beta\in[1,\infty]$.
Like for the classical dynamic random matrix ensembles it will turn out that in this time renormalized case, the processes have many
features in common whenever the parameter $\beta$ is varying only. For this reason we first study the
deterministic case $\beta=\infty$ in Section 2. In this case, the ODEs \eqref{eq_ODE_Hua-Pickrell} are closely related to the 
pseudo-Jacobi polynomials \cite{Ask1, Ask2, JT, L}. In particular, we give conditions on the parameters $a,b$ under which
\eqref{eq_ODE_Hua-Pickrell} has a stationary solution (see Lemma \ref{lem_HP_pseudo_Jac} and Corollary \ref{lem_pseudo_Jac_char}),
and we show that
in the case of the existence, the stationary solutions of 
\eqref{eq_ODE_Hua-Pickrell} are just the ordered zeros of these polynomials which form
finite sequences of orthogonal polynomials; see Lemma \ref{stat-sol-hua-special}. Furthermore, we show that 
\eqref{eq_ODE_Hua-Pickrell} is equivalent to the fact that the polynomial $H(t,z) := \prod_{j=1}^N (z-x_{j,t})$
solves some inverse Hua-Pickrell-type
heat equation with potential; see Lemma \ref{inverse-heat-hua-pick} below. Moreover, we show that for solutions
of \eqref{eq_ODE_Hua-Pickrell}, some symmetry condition at the initial time $t=0$ is preserved for all $t\ge 0$,
and that thus these symmetric solutions are related to solutions of certain frozen symmetric Heckman-Opdam processes
associated with the root systems A and BC in some way; see Lemmas \ref{symmetry-cond},
\ref{symmetry-squared}, and \ref{symmetry-heckman-a} for the details.

 The results on the existence of stationary solutions of the ODEs \eqref{eq_ODE_Hua-Pickrell} in Section 2
 for $\beta=\infty$  will be extended to the SDEs  \eqref{eq_SDE_Hua-Pickrell-norm}  and \eqref{eq_SDE_Hua-Pickrell} for $\beta\in[1,\infty[$
     in Section 3. It will turn out that for $N\ge2$, $\beta\ge1$, $a\ge -1/\beta$, and $b\in\mathbb R$,
     the SDEs \eqref{eq_SDE_Hua-Pickrell-norm} have unique stationary distributions, the so-called Hua-Pickrell measures
     $\rho_{N,a,b,\beta}$ on $C_N^A$, which have  the Lebesgue densities
\begin{align}\label{eq_Hua_Pick_density-intro}
 f_{N,a,b,\beta}(x):= C_{N,a,b,\beta}\prod_{j=1}^N\left(\left(1+x_j^2\right)^{\frac{\beta}{2}(1-N-a)-1}\exp(\beta b \arctan(x_j))\right)\prod_{1\leq j<k\leq N}\lvert x_j-x_k\rvert^{\beta}\,
\end{align}
with  known  normalizations $C_{N,a,b,\beta}>0$. 

As these connections hold for all $N$ in the same way, we shall transfer known limit theorems for the empirical distributions
with explicit limit distributions for some Heckman-Opdam processes in \cite{AV, Au1, Au2, Ho}
to some Hua-Pickrell processes in Section 5. We shall also show there that these free limits are independent from $\beta\in[1,\infty]$.
To describe these result, we now assume that
 the parameters $ a,b$ in \eqref{eq_SDE_Hua-Pickrell-norm} depend on $N$. More precisely,
for all $N\in\mathbb N$ we  consider the solutions $(X_t^N)_{t\geq0}$
of the SDEs
\begin{equation}\label{eq_SDE_N_rescale-intro}
	dX_{j,t}^N=2\sqrt{\frac{1+\left(X^N_{j,t}\right)^2}{\beta N}}\,dB_{j,t}+\frac{2}{N}\left[b_N-a_NX_{j,t}^N+\sum_{k\colon k\neq j}\frac{X^N_{j,t}X^N_{k,t}+1}{X^N_{j,t}-X^N_{k,t}}\right]dt
\,,\;\; (j=1,\ldots,N)
\end{equation}
on $C_N^A$ with $\beta\in[1,\infty],a_N,b_N\in\mathbb R$.
Please notice that, in comparison to \eqref{eq_SDE_Hua-Pickrell-norm}, we now use the time scaling $t\mapsto t/N$.
The main results for the empirical measures
$$
	\mu_{N,t}:=\frac{1}{N}\sum_{j=1}^N\delta_{X_{j,t}^N} \quad (t\ge0).
$$
        of the processes $(X_t^N)_{t\geq0}$ in Section 5 are  as follows:
        
\begin{theorem}\label{limit_emp_meas-intro}
	Let $\beta\in[1,\infty]$.
	Assume that $\lim_{N\to\infty}a_N/N=:\hat{a}$ and $\lim_{N\to\infty}b_N/N=:\hat{b}$ exist, and that
        there exists a probability measure  $\mu\in M^1(\mathbb{R})$ with
    $$
    	\left\lvert\int_{\mathbb{R}}x^n\,d\mu(x)\right\rvert\leq(\gamma n)^n\;\;\text{for all}\;\;n\in\mathbb{N}\;\;\text{and some}\;\;\gamma>0
    $$
   such that all moments of the empirical measures	$\mu_{N,0}$ at time 0
	converge to those of $\mu$ for $N\to\infty$. 
	Then there exist probability measures $(\mu_t)_{t\geq0}\subset M^1(\mathbb{R})$ with
	$$
		\mu_{N,t}\xrightarrow{N\to\infty}\mu_t\;\;\text{weakly for all}\;\;t\geq0\;\;\text{a.s.}
	$$
	        The  Cauchy transforms $G(t,z):=\int_{\mathbb{R}}\frac{1}{z-x}\,d\mu_t(x)$, $\Re(z)\neq0$, of the $\mu_t$ satisfy the
                partial differential equation (PDE)
	\begin{equation}\label{eq_HP_Cauchy_PDE-intro}
		\partial_tG(t,z)
		=-\partial_z\left[\left(-2(\hat{a}+1)z+2\hat{b}\right)G(t,z)+\left(z^2+1\right)G^2(t,z)\right]\,.
	\end{equation}
        Furthermore, for $\hat{a}>0$, the measures $\mu_{t}$ converge weakly for $t\to\infty$ to the
        equilibrium Hua-Pickrell measures $\mu_{\operatorname{HP},\hat{a},\hat{b}}$
with parameters $\hat a,\hat b$
which are the probability measure on $\mathbb{R}$ with density
\begin{equation}\label{free-limit-densities}
	\mu_{\operatorname{HP},\hat a,\hat b}(dx):=
	\begin{cases}
		\frac{\hat a\sqrt{(x-x_-)(x_+-x)}}{\pi(1+x^2)}dx\,,\quad &x\in(x_-,x_+)\,,\\
		0\,,\quad&\text{otherwise}\,
	\end{cases}
\end{equation}
with
$$
x_{\pm}:=\frac{1}{\hat a^2}\left(-\hat b(\hat a+1)
\pm\sqrt{(2\hat a+1)(\hat a^2+\hat b^2)}\right)\,.
$$
The  equilibrium Hua-Pickrell measures $\mu_{\operatorname{HP},\hat{a},\hat{b}}$  are the weak limits of the emirical distributions of the
 Hua-Pickrell measures
     $\rho_{N,a_N,b_N,\beta}$ on $C_N^A$.
\end{theorem}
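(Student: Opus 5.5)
The plan is the standard moment method for $\beta$-Pearson diffusions, followed by an identification of the limit through the Cauchy transform. Apply Itô's formula to the power sums $S_n(t):=\frac1N\sum_j (X^N_{j,t})^n$ under \eqref{eq_SDE_N_rescale-intro}. The key algebraic fact is that $p(x)=1+x^2$ has degree $2$ and $q$ is affine, so the interaction term symmetrizes:
$$\sum_{j\ne k}\frac{(x_j^{n-1}-x_k^{n-1})(x_jx_k+1)}{x_j-x_k}$$
is a polynomial in power sums. Hence
$$dS_n=\Bigl[n\bigl(-2(a_N/N)-\text{const}\bigr)S_n+2n(b_N/N)S_{n-1}+n\!\!\sum_{\text{bilinear}}\!\!S_kS_l\Bigr]dt+O(1/N)\,(\text{drift terms})+dM^N_{n,t},$$
where the $O(1/N)$ terms come from the Itô correction and the diagonal terms of the sum, and $M^N_n$ is a martingale. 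Its quadratic variation is
$$\frac{4n^2}{\beta N^2}\int_0^t\frac1N\sum_j x_j^{2n-2}(1+x_j^2)\,ds=O(N^{-2})$$
as long as the moments stay bounded; this is uniform in $\beta\ge1$ and vanishes for $\beta=\infty$.

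The moment system is triangular in the degree, because the bilinear terms only involve $S_k,S_l$ with $k+l\le n$. This allows an induction on $n$. Assuming all $S_m$ with $m<n$ converge a.s. locally uniformly in $t$ to deterministic limits $m_m(t)$, Gronwall's lemma, Doob's or Burkholder's inequality applied to the martingale part, and Borel–Cantelli (using summability of $N^{-2}$) give a.s. convergence of $S_n$ to the solution $m_n(t)$ of the limiting linear ODE driven by the lower moments. Even moments control odd ones, so the induction closes. The limiting moment equations, packed into the generating function $G(t,z)=\sum_n m_n(t)z^{-n-1}$, are exactly \eqref{eq_HP_Cauchy_PDE-intro}, independently of $\beta$, since $\beta$ only enters through the vanishing martingale term.

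The main obstacle is showing that $(m_n(t))_n$ is a determinate moment sequence of a probability measure $\mu_t$. Weak convergence then follows from moment convergence, and the a.s. statement for all $t$ simultaneously follows by taking a countable dense set of times together with continuity. For this I would prove by induction from the ODEs a bound $|m_n(t)|\le(\gamma_t n)^n$ with $\gamma_t$ locally bounded. This is the reason for the hypothesis $|\int x^n d\mu|\le(\gamma n)^n$: the bilinear convolution terms have a Catalan-type combinatorial structure, so the growth is at most of this factorial order, and Carleman's condition $\sum m_{2n}^{-1/(2n)}=\infty$ then holds. Positivity of the limiting Hankel matrices is inherited from $\mu_{N,t}$.

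For $\hat a>0$ and $t\to\infty$, I would identify the stationary solution of \eqref{eq_HP_Cauchy_PDE-intro}. Setting $\partial_t G=0$ and using $G(z)\sim 1/z$ gives the quadratic equation
$$(z^2+1)G^2-\bigl(2(\hat a+1)z-2\hat b\bigr)G+C=0,$$
where the constant $C$ is determined by the asymptotics at $\infty$. Its solution is a Cauchy transform, and Stieltjes inversion yields the density in \eqref{free-limit-densities} with endpoints $x_\pm$ given by the zeros of the discriminant. Convergence $\mu_t\to\mu_{\operatorname{HP},\hat a,\hat b}$ would follow from the linear moment ODEs: the diagonal coefficient of $m_n$ is strictly negative when $\hat a>0$, at least for the low moments that exist for the limit measure. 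This forces exponential convergence of each moment to its stationary value by induction. Because $\mu_{\operatorname{HP}}$ only has finitely many moments (its tails decay like $x^{-2}$), I would rather work with $G(t,z)$ directly, or with tightness plus the uniqueness of stationary solutions of the PDE. Finally, the last claim follows by starting the $N$-particle system in its stationary law $\rho_{N,a_N,b_N,\beta}$ from Section 3 and using the same Cauchy-transform fixed-point identification. Alternatively, for $\beta<\infty$ one can use a large-deviation / equilibrium-measure argument for the density \eqref{eq_Hua_Pick_density-intro}, whose minimizer solves the same quadratic equation.
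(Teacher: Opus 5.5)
Your route to the existence of $\mu_t$ (Itô's formula for the power sums, a martingale part with quadratic variation $O(N^{-2})$, Burkholder plus Borel--Cantelli, induction on $n$, a bound of order $(cn)^n$ and Carleman's condition) is the same as in the proof of Theorem~\ref{thm_exist_limit_emp_meas}.

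\textbf{Main gap: the limit $t\to\infty$.} It comes from misreading the limit measure. The density in \eqref{free-limit-densities} vanishes outside the bounded interval $(x_-,x_+)$. So $\mu_{\operatorname{HP},\hat a,\hat b}$ is compactly supported, has all moments and is determined by them. It has no $x^{-2}$ tails; power-law tails belong to the finite-$N$ densities, not to the limit. Moreover, in \eqref{eq_free_HP_mom_recur} the terms $k=0$ and $k=n$ of $\sum_k m_k m_{n-k}$ contribute $2m_n$, so the diagonal coefficient is $-2n\hat a<0$ for \emph{every} $n$.

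Consequently the moment argument you discarded is exactly the paper's proof of Lemma~\ref{lem_HP_limit_long_time}:
\begin{itemize}
\item[(i)] Write $m_n(t)=e^{-2n\hat a t}\bigl(m_n(0)+n\int_0^t e^{2n\hat a s}F_n(s)\,ds\bigr)$, where $F_n$ is a polynomial in $m_0,\dots,m_{n-1}$. Induction shows that each $m_n(t)$ converges to a limit independent of the initial data.
\item[(ii)] The Cauchy transform of $\mu_{\operatorname{HP},\hat a,\hat b}$ satisfies the stationary quadratic equation. By compact support it has a convergent expansion at $\infty$, so its moments form a constant solution of \eqref{eq_free_HP_mom_recur}.
\item[(iii)] Hence the limits are its moments, and determinacy yields weak convergence.
\end{itemize}
The fallback you actually commit to, ``tightness plus uniqueness of stationary solutions of the PDE'', is not an argument. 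Uniqueness of a fixed point gives no convergence to it without some Lyapunov or contraction property. You also supply no tightness uniform in $t$: your bounds $(\gamma_t n)^n$ are only locally uniform in $t$.

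\textbf{Two smaller points.}

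First, for the last assertion, starting the particle system in $\rho_{N,a_N,b_N,\beta}$ does not fit Theorem~\ref{thm_exist_limit_emp_meas}. The initial data are then random, and convergence of their empirical moments (with a Carleman-type bound) is precisely what has to be shown. So this is circular unless you add an interchange-of-limits estimate, for instance mixing uniformly in $N$. Your alternative, the equilibrium-measure/large-deviation characterization of the Cauchy ensemble, is the right route. The paper does not prove this part itself but quotes \cite{FR} (and \cite{JT} for the zeros of the pseudo-Jacobi polynomials).

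Second, under the mere bound $|m_n|\le(\gamma n)^n$ the series $\sum_n m_n(t)z^{-n-1}$ need not converge. Packing the moment equations into $G$ therefore only gives a formal identity. To obtain \eqref{eq_HP_Cauchy_PDE-intro} for the actual Cauchy transform you should, as the paper does, derive the equation for the finite-$N$ transforms $G^N$ and pass to the limit using the weak convergence. One way is Itô's formula for the bounded functions $x\mapsto (z-x)^{-1}$ with $z\notin\mathbb R$.
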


We point out that this global limit result fits to the microscopic edge limits in \cite{AM}.

For  $\hat{a}=-1$ and $\hat{b}=0$, we shall 
identify the $\mu_t$ for general starting conditions via an explicit transform of the {\it free positive multiplicative Brownian motion}
studied in \cite{Z} by using the semicircle distribution $\mu_{\operatorname{sc},R}\in M^1(\mathbb R)$ with radius $R>0$ with densities
$$\frac{2}{\pi R^2}\sqrt{R^2-x^2} {\bf 1}_{[-R,R]}$$ together with  the uniform distribution
  $\operatorname{Unif}_{[a,b]}$  on $[a,b]$ with density $\frac{1}{b-a}{\bf 1}_{[a,b]}$.

\begin{theorem}
  Let $(\mu_t)_{t\geq0}$ be the  probability measures from Theorem \ref{limit_emp_meas-intro} with
 $\hat{a}=-1$ and $\hat{b}=0$  such that the starting distribution $\mu$ is even.
  Let $\nu\in M^1((0,\infty))$ be the unique  measure
which is invariant under the pushforward by $x\mapsto x^{-1}$, and with $f(\nu)=\mu$ with
	$$
		f(x):=\frac{x^{1/2}-x^{-1/2}}{2}\,.
	$$
	Then
	$$
	\mu_t
	=f\left(\nu\boxtimes\exp\left(\mu_{\operatorname{sc},4\sqrt{2t}}\boxplus\operatorname{Unif}_{[-4t,4t]}\right)\right)\;\;\text{for all}\;\;t\geq0\,,$$
	where $f(\cdot),\exp(\cdot)$ are the pushforwards under  $f,\exp$,  and where $\boxtimes,\boxplus$ are
        the usual free additive and multiplicative convolutions (see \cite{MS,BV}).
\end{theorem}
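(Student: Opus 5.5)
The plan is to reduce the statement to a known free limit theorem for Heckman--Opdam (type A) processes via the change of variables $x=\sinh s$, $y=e^{2s}$. Note that $f(e^{2s})=\sinh s$. Inversion-invariant measures $\nu$ on $(0,\infty)$ correspond exactly to even measures on $s\in\mathbb R$, and hence, via $\sinh$, to even measures $\mu$ on $\mathbb R$. This explains the existence and uniqueness of $\nu$ with $f(\nu)=\mu$.

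\emph{Step 1: the finite-$N$ level.} For $\hat a=-1$, $\hat b=0$ and even initial configurations, I would invoke Lemma \ref{symmetry-cond}. It says that the symmetry $x\mapsto -x$ of the configuration is preserved by \eqref{eq_ODE_Hua-Pickrell}, and for $\beta<\infty$ the analogous statement holds in distribution. I would then use Lemmas \ref{symmetry-squared} and \ref{symmetry-heckman-a}: after the substitution $x_j=\sinh s_j$ (equivalently $y_j=e^{2s_j}$), the symmetric Hua--Pickrell dynamics with $a_N=-N$ (up to $o(N)$ corrections) and $b_N=0$ become frozen, or $\beta$-, Heckman--Opdam processes of type A with the appropriate multiplicity. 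The substitution $x=\sinh s$ turns the interaction term $\frac{x_jx_k+1}{x_j-x_k}$ into hyperbolic-cotangent interactions, and $1+x^2=\cosh^2 s$ makes the diffusion coefficient constant in $s$. I would track the time scalings carefully, since the $\frac{2}{N}$ normalization in \eqref{eq_SDE_N_rescale-intro} must match the normalization used in \cite{AV, Au1, Au2}.

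\emph{Step 2: the free limit on the Heckman--Opdam side.} I would quote the limit theorem of \cite{Au1, Au2} (resting on \cite{Z}). For type A Heckman--Opdam processes in the regime where the multiplicity grows proportionally to $N$, the empirical measures of the exponentiated particles $e^{2s_j}$ converge to $\nu\boxtimes\exp\bigl(\mu_{\operatorname{sc},4\sqrt{2t}}\boxplus\operatorname{Unif}_{[-4t,4t]}\bigr)$, the free positive multiplicative Brownian motion started at $\nu$, with the uniform drift component coming from the Weyl-vector drift. Pushing forward by the continuous map $f$ then gives weak convergence of $\mu_{N,t}$ to the right-hand side. By the almost sure uniqueness of the limit $\mu_t$ in Theorem \ref{limit_emp_meas-intro}, this identifies $\mu_t$.

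\emph{Alternative and check.} As a cross-check, or as a replacement if the parameter matching in Step 2 is awkward, I would verify directly that the pushforward of $\nu_t:=\nu\boxtimes\exp(\cdots)$ under $f$ has Cauchy transform solving $\partial_tG=-\partial_z[(z^2+1)G^2]$. This is the PDE \eqref{eq_HP_Cauchy_PDE-intro} with $\hat a=-1$, $\hat b=0$, and the check would use the subordination or $S$-transform PDE for $\nu_t$ from \cite{Z}. I would then conclude by uniqueness. In the proof of Theorem \ref{limit_emp_meas-intro}, uniqueness follows from the inductive moment recursion, so for this I need the moment growth bound $|\int x^n\,d\mu_t|\le(\gamma n)^n$ for the candidate. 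That bound is preserved because the supports grow at most exponentially in $s$.

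\emph{Main obstacle.} I expect the main obstacle to be the exact bookkeeping of constants. The radius $4\sqrt{2t}$ and the interval $[-4t,4t]$ must come out precisely from the time scaling and from the $\sinh$ versus $e^{2s}$ factor of $2$. I would fix these constants first by matching the variance and the drift of a single particle, for instance via the second moment of $\mu_t$ computed from the PDE at small $t$.
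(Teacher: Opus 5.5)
Your proposal is essentially the paper's own proof. The paper takes inversion-symmetric initial data approximating $\nu$ and runs the frozen type-A Heckman--Opdam ODE \eqref{eq_froz_mult_BM_ODE}. It uses Lemma \ref{symmetry-heckman-a} to see that $f(y^N_{4t})$ solves the frozen Hua--Pickrell ODE with $a_N=-N+\frac12$, $b_N=0$. It quotes the free limit of the empirical measures of $y^N_{t/N}$ from Theorems 3.1 and 3.2 of \cite{AV}, rather than \cite{Au1, Au2}, and then identifies $\mu_t$ through Theorem \ref{thm_exist_limit_emp_meas}.

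One caveat: the reduction to a type-A process holds pathwise only in the frozen case $\beta=\infty$. For finite $\beta$ the reflection symmetry holds only in law, which does not turn the dynamics into a Heckman--Opdam process. So you should drop the ``or $\beta$-'' variant, work at $\beta=\infty$, and use that $\mu_t$ does not depend on $\beta\in[1,\infty]$, exactly as the paper does.
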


 Besides these limits for the empirical measures for $N\to\infty$ we present a freezing central limit theorem (CLT) for $\beta\to\infty$ for
 the Hua-Pickrell measures
     $\rho_{N,a_N,b_N,\beta}$ for  fixed $N$ and $a,b$ in Section 4. This CLT fits to  corresponding freezing CLTs for  the classical random matrix models, i.e.,
 $\beta$-Hermite, Laguerre and Jacobi ensembles in \cite{AHV, DE, GK, He, HV, V1, V2} and is as follows:

\begin{theorem}\label{clt-hua-pickrell-alg-intro}
 	Let $a>0$, $b\in\mathbb R$, $\beta\ge1$, and $N\ge 2$ an integer.
 	Let $ X_{N,a,b,\beta}$ be random variables with the distributions $\rho_{N,a,b,\beta}$. 
 	Then the distributions of $\sqrt{\beta}( X_{N,a,b,\beta}-z)$
 	converge weakly for $\beta\rightarrow\infty$ to the centered $N-$dimensional normal distribution
        $N(0,\Sigma)$ with the positive definite  covariance matrix ${\Sigma}$ whose inverse  
 	$S:=({s}_{j,k})_{j,k=1,...,N}:={\Sigma}^{-1}$ satisfies
 	\begin{align*}
 	 s_{j,k}=
 	\begin{cases}\sum_{ l: \> l\ne j}
 	  \frac{1}{(z_j-z_l)^2}+\frac{(N+a-1)(1-z_j^2)+2bz_j}{(1+z_j^2)^2} &\textit{ for } j=k\\
 	-\frac{1}{(z_j-z_k)^2}&\textit{ for } j\neq k.
 	\end{cases}
 	\end{align*}
\end{theorem}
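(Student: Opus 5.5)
Write the density as $f_{N,a,b,\beta}(x)=C_{N,a,b,\beta}\exp(-\beta W(x))\prod_j(1+x_j^2)^{-1}$ on $C_N^A$, with
$$W(x):=\frac{N+a-1}{2}\sum_{j}\log(1+x_j^2)-b\sum_j\arctan(x_j)-\sum_{j<k}\log(x_k-x_j).$$
Then this is a Laplace-method argument as $\beta\to\infty$ in the spirit of the freezing CLTs for the $\beta$-Hermite, Laguerre and Jacobi ensembles, and $z$ in the statement is the (unique) minimizer of $W$ on the interior of $C_N^A$. The first step identifies $z$. The critical point equations $\partial_j W=0$ read
$$\frac{(N+a-1)z_j-b}{1+z_j^2}=\sum_{l\ne j}\frac{1}{z_j-z_l}.$$
Multiplying by $1+z_j^2$ and using $\frac{1+z_j^2}{z_j-z_l}=\frac{z_jz_l+1}{z_j-z_l}+z_j$, we get $\sum_{l\neq j}\frac{z_jz_l+1}{z_j-z_l}=b-az_j$. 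This is exactly the stationarity condition of the ODE \eqref{eq_ODE_Hua-Pickrell}. By Lemma \ref{stat-sol-hua-special} and Corollary \ref{lem_pseudo_Jac_char}, which apply for $a>0$, $z$ is the vector of ordered zeros of the corresponding pseudo-Jacobi polynomial.

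The second step proves that $W$ has a unique minimizer and is coercive. Near the boundary of the chamber, $W\to+\infty$ because of the $-\log(x_k-x_j)$ terms. As $|x|\to\infty$, the one-particle terms grow like $(N+a-1)\log|x_j|$, while the interaction can decrease at most like $-(N-1)\log|x_j|$ in total for a particle escaping to infinity. The net growth $a\log|x_j|$ is positive precisely because $a>0$, and it also dominates the weight $\prod_j(1+x_j^2)^{-1}$ and the $\beta$-independent factors. So a minimizer exists in the interior. Uniqueness follows from the uniqueness of the stationary solution in Section 2, since every interior minimizer is a critical point and hence a stationary solution of the ODE. The Hessian $\nabla^2W(z)$ is computed directly. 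For $j\ne k$ the entry is $-\frac{1}{(z_j-z_k)^2}$. On the diagonal it is
$$\sum_{l\ne j}\frac{1}{(z_j-z_l)^2}+(N+a-1)\frac{1-z_j^2}{(1+z_j^2)^2}+\frac{2bz_j}{(1+z_j^2)^2},$$
using $\frac{d}{dx}\frac{x}{1+x^2}=\frac{1-x^2}{(1+x^2)^2}$ and $\frac{d}{dx}\frac{1}{1+x^2}=\frac{-2x}{(1+x^2)^2}$. This is exactly $S$.

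The main obstacle is showing that $S$ is positive definite, since it is not automatically diagonally dominant. I would first try the following. The interaction part is a graph Laplacian $L$, which is positive semidefinite with kernel spanned by $(1,\dots,1)$. The one-particle part $D$ is diagonal. Using the critical point equations, $D$ can be rewritten in terms of sums over $l$ of the form $\frac{1}{(z_j-z_l)}\cdot(\ldots)$. This should let $S$ be expressed as a quadratic form that is visibly nonnegative, as in the analogous Jacobi computations. Alternatively, I would use that $z$ is a strict global minimizer. The Hessian is then at least positive semidefinite, and nondegeneracy can be obtained by relating a kernel vector of $S$ to a degenerate zero of the pseudo-Jacobi polynomial, which has simple zeros. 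This gives positive definiteness.

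Finally, substitute $x=z+y/\sqrt\beta$. The density of $\sqrt\beta(X-z)$ is proportional to
$$\exp\bigl(-\beta[W(z+y/\sqrt\beta)-W(z)]\bigr)\prod_j\bigl(1+(z_j+y_j/\sqrt\beta)^2\bigr)^{-1}\mathbf 1_{\{z+y/\sqrt\beta\in C_N^A\}}.$$
This converges pointwise to $\exp(-\tfrac12 y^TSy)\prod_j(1+z_j^2)^{-1}$. To pass to the limit, dominate uniformly for $\beta\ge\beta_0$. Near $z$, use the Taylor bound $W(z+h)-W(z)\ge c|h|^2$. Away from $z$, use $W\ge W(z)+\delta$ together with the coercivity above; the latter gives integrability of $e^{-(\beta-\beta_0)\delta}e^{-\beta_0 W}$, which requires $\beta_0$ large enough that $\beta_0 a>1$ or so. Dominated convergence for both the numerator and the normalization then yields weak convergence, via Scheffé's lemma, to $N(0,S^{-1})$.
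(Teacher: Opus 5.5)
Apart from one step, your argument is the paper's first step carried out by hand. The paper applies Theorem \ref{Laplace2} with $\psi(x)=\prod_j(1+x_j^2)^{-1}$ and $\phi=e^{-W}=h_{N,-(N+a),b}$, takes existence and uniqueness of the maximizer from Lemma \ref{lem_pseudo_Jac_char}, and computes the same Hessian. There are two small slips. First, the critical-point equation gives $\sum_{l\ne j}\frac{z_jz_l+1}{z_j-z_l}=az_j-b$, not $b-az_j$. Second, the right citation is Lemma \ref{lem_pseudo_Jac_char} with parameter $-(N+a)$, or Lemma \ref{lem_HP_pseudo_Jac}; Corollary \ref{stat-sol-hua-special} concerns $a\in]-1/2,0]$, $b=0$ and does not apply here.

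\textbf{The gap.} The genuine gap is the step you flag yourself: positive definiteness of $S$. Neither of your suggestions closes it.
\begin{itemize}
\item Rewriting $S$ as a visibly nonnegative form only reproduces semidefiniteness, which you already have from minimality. A strict global minimizer can still have a degenerate Hessian.
\item The ``degenerate zero'' mechanism does not work as stated. Simplicity of the zeros of $P_N(\cdot;-(N+a),b)$ is a property of the point $z$. A kernel vector $v$ of $S$ only says that $\nabla W(z+\epsilon v)=O(\epsilon^2)$, and nothing forces two zeros to collide.
\end{itemize}
This is not cosmetic. Your bound $W(z+h)-W(z)\ge c|h|^2$ in the final step depends on it, and so do the $\sqrt\beta$-scaling and the limit $N(0,S^{-1})$. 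Positive definiteness is also part of the statement itself.

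\textbf{How the paper closes it.} The paper passes to $\tilde S=D^{1/2}SD^{1/2}$ with $D=\operatorname{diag}(1+z_j^2)$. It identifies $\tilde S$ with $-S^{Jac}/4$ for the Jacobi matrix of \cite{HV}, with complex Jacobi parameters $(-a-N+ib,\,-a-N-ib)$ and $z^{Jac}=iz$. It then imports the purely algebraic eigenvalue computation of Proposition 3.1 in \cite{HV}, which gives the eigenvalues $k(a+(k-1)/2)>0$.

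\textbf{How your polynomial idea can be repaired.} Your idea can be made to work, but with a different mechanism.
\begin{enumerate}
\item Put $Lf:=(1+t^2)f''-2[(N+a-1)t-b]f'+N(N+2a-1)f$ and $f_x(t):=\prod_j(t-x_j)$. Then $Lf_x$ has degree at most $N-1$, and Lagrange interpolation at the $x_j$ gives
\[
Lf_x(t)=-2\sum_{j=1}^N(1+x_j^2)\,\partial_jW(x)\prod_{l\ne j}(t-x_l).
\]
\item Differentiate at $x=z$ in direction $v$ and use $\nabla W(z)=0$. This shows that $L$, acting on polynomials of degree at most $N-1$, has the matrix $2DS$ in the basis $\prod_{l\ne j}(t-z_l)$, $j=1,\dots,N$.
\item Since $Lt^k=(N-k)(N-k+2a-1)t^k+(\text{lower order})$, this operator is triangular in the monomial basis. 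Its diagonal entries are $m(m+2a-1)>0$ for $m=1,\dots,N$, using $a>0$.
\item Hence the symmetric matrix $\tilde S$, which is similar to $DS$, has eigenvalues $m(a+(m-1)/2)>0$, and $S$ is positive definite.
\end{enumerate}
Equivalently, the ODE \eqref{eq_ODE_Hua-Pickrell2} reads $\dot x_j=-2(1+x_j^2)\partial_jW(x)$. Its linearization $-2DS$ at $z$ is conjugate to the triangular system \eqref{elementary-ode}, whose diagonal is $-m(2a+m-1)$.
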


Further informations about the inverse covariance matrices $S$ will be discussed in Section 4.
 
As already mentioned above, we shall prove  Theorem \ref{lem_HP_exist_unique} in an appendix in Section 6.

Parts of this paper are contained in the thesis \cite{Au1} of the first author of this paper.

\section{The frozen case and pseudo-Jacobi polynomials}

As announced in the introduction, we  study the frozen case $\beta=\infty$ for a fixed number $N\ge2$ in this section, i.e., we consider
 the deterministic ODEs
\begin{equation}\label{eq_ODE_Hua-Pickrell2}
\frac{d}{dt}x_{j,t}=2\left[b-a x_{j,t}+\sum_{k\colon k\neq j}\frac{x_{j,t}x_{k,t}+1}{x_{j,t}-x_{k,t}}\right] \quad\quad  (j=1,\ldots,N)
\end{equation}
on  $C_N^A$. We shall show that the stationary solutions  of \eqref{eq_ODE_Hua-Pickrell2}
are closely related with the zeros of the  pseudo-Jacobi polynomials studied in \cite{Ask1, Ask2, JT, L}.
These polynomials are defined as follows:

\begin{definition} Consider the classical Jacobi polynomials $(P_n^{(\alpha,\beta)})_{n\ge0}$ defined by
 $$P_n^{(\alpha,\beta)}(x):=\frac{(\alpha+1)_n}{n!}\sum_{k=0}^n \frac{(-n)_k(\alpha+\beta+n+1)_k}{(\alpha+1)_k k!}\Bigl(\frac{1-x}{2}\Bigr)^k$$
      with the Pochhammer symbol $(a)_k=a(a+1)\cdots(a+k-1)$ as defined e.g.~in \cite{Sz}. These polynomials are usually
      considered for $\alpha,\beta\in ]-1,\infty[$ in which case they are orthogonal w.r.t.~ the weight function
          $(1-x)^\alpha(1+x)^\beta$ on $]-1,1[$.
          However, these polynomials exist for all $\alpha,\beta\in\mathbb C$ and $n=0,1,\ldots$, where    $P_n(\alpha,\beta)$
          is a polynomial of degree at most $n$.

For parameters $a,b\in\mathbb R$ and $n=0,1,\ldots$, the pseudo-Jacobi polynomials are now defined by 
	\begin{equation}\label{def-pseudo-jac}
		P_n(x;a,b):=(-i)^nP_n^{(a+ib,a-ib)}(ix)
		=(-i)^n\frac{(a+ib+1)_n}{n!}\sum_{k=0}^n\frac{(-n)_k(2a+n+1)_k}{(a+ib+1)_k k!}\left(\frac{1-ix}{2}\right)^k.
	\end{equation}
        These polynomials (partially with different parametrizations)
        are known in the literature also as Romanovski(-Routh) polynomials; see \cite{RWAK, Rom, Rou, AD}
        and references therein. 
\end{definition}

We collect some basic known facts for the  pseudo-Jacobi polynomials:

\begin{lemma}\label{facts-peudojacobi-trivial} Let $a,b\in\mathbb R$ and $n=0,1,\ldots$.
\begin{enumerate}
\item[\rm{(1)}] Three term recurrence:
  \begin{align} (n+1)(n+2a+1)(n+a)P_{n+1}(x;a,b)= &(2n+2a+1)(x(n+a)(n+a+1)+ab)P_n(x;a,b) \notag \\ &+((n+a)^2 +b^2)(n+a+1)P_{n-1}(x;a,b).
    \end{align} 
  with $P_{-1}:=0, P_0=1$.
\item[\rm{(2)}] $P_n(x;a,b)$ is a polynomial with real coefficients with degree at most $n$.
\item[\rm{(3)}]  $P_n(x;a,b)$ is a solution of the ODE
  \begin{equation}\label{eq_pf_HP_ODE_station}
	 	(1+x^2)f''(x)+2[(a+1)x+b]f'(x)-n(n+2a+1)f(x)=0\,,\;\;x\in\mathbb{R}\,.
	 \end{equation}
\item[\rm{(4)}] If $a<0$ and $N<-a$, then  $(P_n(\cdot;a,b))_{n\in\{0,\dots,N\}}$ is a finite sequence of orthogonal polynomials w.r.t.~the
weight function
  \begin{equation}	w(x):=(1-ix)^{a+ib}(1+ix)^{a-ib}=(1+x^2)^ae^{2b\arctan(x)}.\end{equation}
	More precisely,
	\begin{gather*}
	  \int_{\mathbb{R}}P^2_n(x;a,b)w(x)\,dx = \frac{2\pi \Gamma(-n-2a)}{(-2n-2a-1) n! |\Gamma(-n-a+ib)|^2}
          <\infty\;\;\text{for}\;\;n\leq N-1\,,\\
		\int_{\mathbb{R}}P_n(x;a,b)P_m(x;a,b)w(x)\,dx=0 \;\;\text{for}\;\;n<m\leq N\,.
	\end{gather*}
\item[\rm{(5)}] For $a<0$ and $N<-a$, $P_N(\cdot;a,b)$ has $N$ different real zeroes  $x_1<\dots<x_N$.
\end{enumerate}
\end{lemma}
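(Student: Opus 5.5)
Since the lemma collects known facts, the plan is to transfer each statement from the classical Jacobi polynomials $P_n^{(\alpha,\beta)}$ with complex parameters $\alpha=a+ib$, $\beta=a-ib$ via the substitution $x\mapsto ix$. Standard facts on Jacobi polynomials (\cite{Sz}) hold as polynomial identities in $\alpha,\beta\in\mathbb C$, since both sides are polynomial in the parameters.

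\emph{Items (3), (1) and (2).} I start with (3). The Jacobi differential equation reads
$$(1-y^2)g''+[\beta-\alpha-(\alpha+\beta+2)y]g'+n(n+\alpha+\beta+1)g=0 .$$
I set $g=P_n^{(a+ib,a-ib)}$ and $f(x)=(-i)^n g(ix)$. Then $g'(ix)=i^{\,n-1}\cdot(\ldots)$ in terms of $f'$, and likewise for $g''$, and $1-y^2=1+x^2$. Substituting gives (3) directly, after noting $\beta-\alpha=-2ib$ and $\alpha+\beta+2=2a+2$.

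For (1), I substitute $y=ix$ into the standard Jacobi three-term recurrence with $\alpha+\beta=2a$ and $\alpha^2-\beta^2=4iab$. The powers of $-i$ are then tracked to absorb the factors $i$. After simplifying, this yields the stated real recurrence.

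Item (2) follows by induction from (1). The coefficients in the recurrence are real, and $P_0=1$, $P_1$ is real. The only subtlety is division by $(n+1)(n+2a+1)(n+a)$, which may vanish. In that case I argue instead from the ODE (3): comparing coefficients gives a two-step recursion for the coefficients $c_k$ of $f$,
$$\bigl(k(k-1)+2(a+1)k-n(n+2a+1)\bigr)c_k=-(k+2)(k+1)c_{k+2}-2b(k+1)c_{k+1}.$$
This recursion is real. Alternatively, one checks directly that $\overline{P_n(\bar x)}=P_n(x)$ using the symmetry $P_n^{(\alpha,\beta)}(-y)=(-1)^nP_n^{(\beta,\alpha)}(y)$ together with complex conjugation, which swaps $a+ib$ and $a-ib$. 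I prefer this conjugation argument, since it holds without any exceptional cases.

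\emph{Items (4) and (5).} These are the substantive parts, and (4) is the main obstacle. For orthogonality I use (3) in Sturm--Liouville form. Since $w'/w=(2ax+2b)/(1+x^2)$, we have $((1+x^2)w)'=2[(a+1)x+b]w$. Hence $\bigl((1+x^2)wP_n'\bigr)'=\lambda_nwP_n$ with $\lambda_n=n(n+2a+1)$.

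Multiplying by $P_m$, subtracting the symmetric expression, and integrating over $[-R,R]$ gives
$$(\lambda_n-\lambda_m)\int P_nP_mw=\Bigl[(1+x^2)w\,(P_n'P_m-P_m'P_n)\Bigr]_{-R}^{R}.$$
The boundary term is $O(R^{2+2a+n+m-1})$, which tends to $0$ when $n+m<-2a-1$. This covers $n<m\le N$ with $n\le N-1$, because $N<-a$. The same growth bound gives integrability. Moreover, $\lambda_n\ne\lambda_m$ because $n+m+2a+1\neq0$ in this range.

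The norm formula I would take from the literature (\cite{Ask1, JT, L}) or compute via a Beta-type integral $\int(1-ix)^{\alpha}(1+ix)^{\beta}dx$ (Cauchy's formula). For this I use Rodrigues' formula,
$$P_n\propto w^{-1}\frac{d^n}{dx^n}\bigl[(1+x^2)^nw\bigr],$$
integrated by parts $n$ times against $P_n$. This leading-coefficient bookkeeping is the computational heart of the argument, and I would cite it rather than redo it.

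Item (5) then follows from (4) by the classical argument. Suppose $P_N$ had fewer than $N$ sign changes on $\mathbb R$, at points $y_1,\dots,y_k$ with $k<N$. Then $P_N\prod(x-y_j)\,w\ge0$ is not identically zero, yet its integral vanishes by orthogonality, because $\prod(x-y_j)$ is a combination of $P_0,\dots,P_{k}$. This is a contradiction.

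This argument requires $\deg P_k=k$ exactly for $k\le N$. That holds because the leading coefficient is a nonzero multiple of $(2a+n+1)_n$, and $(2a+n+1)_n\neq0$ for $n\le N<-a$. The products needed are integrable because the degree is at most $2N-1<-2a-1$.
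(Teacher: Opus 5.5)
Your route is the paper's route: (1) and (3) come from substituting $y=ix$ into the Jacobi recurrence and differential equation with $\alpha=a+ib$, $\beta=a-ib$, justified by polynomial continuation in the parameters. The norm in (4) is taken from the literature, and (5) follows by the classical sign-change argument. Your substitutions produce exactly the stated recurrence and ODE.

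Where you go beyond the paper, you improve it:
\begin{itemize}
\item The paper's ``(2) follows from (1)'' silently needs $(n+1)(n+2a+1)(n+a)\neq 0$, which fails e.g.\ for $a=-1$, $n=1$. Your conjugation identity $\overline{P_n(\bar x;a,b)}=P_n(x;a,b)$ avoids this.
\item Your Sturm--Liouville argument proves the orthogonality that the paper only cites from Askey. It uses the boundary term $O(R^{n+m+2a+1})$ and $\lambda_n-\lambda_m=(n-m)(n+m+2a+1)\neq 0$.
\item Your checks that $\deg P_n=n$ (leading coefficient $(2a+n+1)_n/(n!\,2^n)$) and that $\deg(P_N Q)\le 2N-1<-2a-1$ are exactly what the ``classical results'' in (5) need.
\end{itemize}

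The step you defer, the norm constant, cannot be closed by citation, because the constant printed in (4) is wrong. Already $n=0$ fails. Cauchy's beta integral is
\[
\int_{\mathbb R}(1+ix)^{-\rho}(1-ix)^{-\sigma}\,dx=\frac{2^{2-\rho-\sigma}\pi\,\Gamma(\rho+\sigma-1)}{\Gamma(\rho)\Gamma(\sigma)} .
\]
With $\rho=-a+ib$ and $\sigma=-a-ib$ it gives $\int_{\mathbb R}w\,dx=2^{2a+2}\pi\,\Gamma(-2a-1)/|\Gamma(-a+ib)|^2$. The stated formula instead gives $2\pi\,\Gamma(-2a-1)/|\Gamma(-a+ib)|^2$. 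For $a=-2$, $b=0$, $N=1$ the lemma asserts $\int_{\mathbb R}(1+x^2)^{-2}\,dx=4\pi$, whereas the true value is $\pi/2$.

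Propagating with the recurrence (e.g.\ $\int P_1^2 w=-\frac{(a+1)^2+b^2}{2a+3}\int w$, which one also checks directly) gives
\[
\int_{\mathbb R}P_n^2(x;a,b)\,w(x)\,dx=\frac{2^{2a+2}\pi\,\Gamma(-n-2a)}{(-2n-2a-1)\,n!\,|\Gamma(-n-a+ib)|^2},
\]
so the factor $2$ should read $2^{2a+2}$. This error is in the statement, and the paper's proof also only cites; it is not a flaw in your method. Still, the $n=0$ sanity check is precisely what deferring the ``bookkeeping'' skipped. The paper's later uses of the lemma (the ODE, orthogonality, simple real zeros, leading coefficients) do not depend on this constant.
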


\begin{proof} (1) and (3) follow from the corresponding formulas for the classic
Jacobi polynomials in Ch.~IV of \cite{Sz} and  analytic continuation; see also \cite{JT}. (2) follows from (1).
For (4) see Eq.~(1.10) of \cite{Ask2} or \cite{Ask1, Rom, Rou}, and (5) follows from (4) and classical results on orthogonal polynomials;
see also  \cite{JT}.
\end{proof}

We now are mainly interested in the case $a<0$ and $N<-a$. The following result is
motivated by results on potential theory of  the classical orthogonal polynomials due to Stieltjes;
see Section 6.7 of \cite{Sz} for the background and \cite{V1, HV} for reformulations of these classical results, which fit to the following result:

\begin{lemma}\label{lem_pseudo_Jac_char}
  Let $a<0$, $b\in\mathbb R$, and $N<-a$. Then the following assertions are equivalent for $x=(x_1,\ldots,x_N)\in C_N^A$:
\begin{enumerate}
\item[\rm{(1)}] $x$ is the unique global maximum on $C_N^A$ of the function
  $$h_{N,a,b}(x):= \prod_{j=1}^N \Bigl((1+x_j^2)^{(a+1)/2} e^{ b\arctan(x_j)}\Bigr)\cdot \prod_{j,k:\> 1\le j< k \le N} (x_k-x_j).$$
\item[\rm{(2)}] For all $j=1,\ldots,N$,
 $$	0=(a+1)x_j+b+\sum_{k\colon k\neq j}\frac{1+x_j^2}{x_j-x_k}.$$
\item[\rm{(3)}] $x_1<\dots<x_N$ are the ordered zeros of $P_N(x;a,b)$.
\end{enumerate}
\end{lemma}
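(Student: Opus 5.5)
The plan is the classical Stieltjes argument (cf.\ Section 6.7 of \cite{Sz}), adapted to the weight $(1+x^2)^{(a+1)/2}e^{b\arctan x}$. I would prove (1)$\Rightarrow$(2), then (2)$\Leftrightarrow$(3), and finally get (2)$\Rightarrow$(1) from existence of a maximizer together with uniqueness of solutions of (2).

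\emph{Step 1: existence of a maximum and (1)$\Rightarrow$(2).} On $C_N^A$ the function $h_{N,a,b}$ is continuous and nonnegative, and it vanishes on the boundary $x_j=x_{j+1}$. As $|x|\to\infty$ we have the bound
$$h_{N,a,b}(x)\le C\prod_j(1+x_j^2)^{(a+1)/2}\prod_{j<k}(|x_j|+|x_k|),$$
and each $x_j$ appears in at most $N-1$ Vandermonde factors. Hence the total degree in $|x_j|$ is at most $a+1+N-1=a+N<0$, so $h$ tends to $0$ at infinity. More carefully, I would bound $|x_k-x_j|\le (1+x_j^2)^{1/2}(1+x_k^2)^{1/2}$, which gives $h\le C\prod_j (1+x_j^2)^{(a+N)/2}\to0$. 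Since $\arctan$ is bounded, the factor $e^{b\arctan}$ is harmless. Therefore a maximum exists and lies in the open chamber. There the gradient of $\log h$ vanishes:
$$\frac{(a+1)x_j+b}{1+x_j^2}+\sum_{k\ne j}\frac{1}{x_j-x_k}=0 .$$
Multiplying by $1+x_j^2$ gives (2).

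\emph{Step 2: (2)$\Leftrightarrow$(3).} Put $f(x)=\prod_j(x-x_j)$ with distinct zeros. Then
$$f''(x_j)=2f'(x_j)\sum_{k\ne j}\frac{1}{x_j-x_k},$$
so (2) is equivalent to
$$(1+x_j^2)f''(x_j)+2[(a+1)x_j+b]f'(x_j)=0\quad\text{for all } j.$$
The polynomial $(1+x^2)f''+2[(a+1)x+b]f'$ has degree $N$ and leading coefficient $N(N-1)+2(a+1)N=N(N+2a+1)$. Vanishing at all zeros of $f$ is therefore equivalent to
$$(1+x^2)f''+2[(a+1)x+b]f'-N(N+2a+1)f=0,$$
which is the ODE \eqref{eq_pf_HP_ODE_station}. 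Since $N<-a$, we have $N(N+2a+1)-n(n+2a+1)=(N-n)(N+n+2a+1)\ne0$ for $n<N$ (because $N+n+2a+1<1-N+n\le0$). So the differential operator acts triangularly on monomials with distinct eigenvalues, and the monic degree-$N$ polynomial solution is unique. By Lemma \ref{facts-peudojacobi-trivial}(3) this solution is a multiple of $P_N(\cdot;a,b)$. By Lemma \ref{facts-peudojacobi-trivial}(5), $P_N(\cdot;a,b)$ has degree exactly $N$ with $N$ distinct real zeros. This proves (2)$\Rightarrow$(3). Conversely, if $x$ consists of the ordered zeros of $P_N$, the same computation run backwards gives (2).

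\emph{Step 3: uniqueness.} Step 2 shows that (2) has exactly one solution in $C_N^A$, namely the zeros of $P_N$. By Step 1 every global maximizer solves (2), so the maximizer exists and is unique, and it equals that solution. This gives (3)$\Rightarrow$(1) and completes the cycle of implications. The main obstacle is the decay estimate at infinity in Step 1, which is exactly where the hypothesis $N<-a$ enters; the bound via $|x_k-x_j|\le\sqrt{(1+x_j^2)(1+x_k^2)}$ should suffice.
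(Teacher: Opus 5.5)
Your proof is correct. It has the same Stieltjes-type skeleton as the paper's: an interior maximizer, critical-point equations giving (2), the ODE \eqref{eq_pf_HP_ODE_station} for $f=\prod_j(x-x_j)$, and uniqueness of its polynomial solutions. The difference is in how you prove the uniqueness step.

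\emph{Paper's route.} It writes the ODE in Sturm--Liouville form $\frac{d}{dx}\bigl(w(1+x^2)f'\bigr)=N(N+2a+1)wf$ with $w(x)=(1+x^2)^ae^{2b\arctan x}$. Hence $w(1+x^2)(f'g-fg')$ is constant for two polynomial solutions $f,g$. It then uses $\deg(f'g-fg')\le 2N-2$ and $a<-N$ to force this constant to be $0$.

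\emph{Your route.} You use that $f\mapsto(1+x^2)f''+2[(a+1)x+b]f'$ is triangular on monomials with diagonal entries $n(n+2a+1)$, and that for $n<N$ these all differ from the $n=N$ entry.

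What each buys:
\begin{itemize}
\item Your argument is purely algebraic. It shows directly that every nonzero polynomial solution of degree at most $N$ has exact degree $N$. It only needs $N+n+2a+1\neq0$ for $0\le n<N$, not a decay condition at infinity.
\item It also pinpoints the exceptional case $N=2$, $a=-2$, $b=0$ of Remark \ref{remark-ode-solutions-stat1}(2). There the diagonal entries for $n=1$ and $n=2$ coincide (both equal $-2$), and the solution space becomes two-dimensional.
\item The paper's route stays tied to the orthogonality weight of Lemma \ref{facts-peudojacobi-trivial}(4).
\item Unlike yours, the paper's route does not invoke Lemma \ref{facts-peudojacobi-trivial}(5). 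It obtains the $N$ distinct real zeros of $P_N$ as a by-product (Remark \ref{remark-ode-solutions-stat1}(1)).
\end{itemize}

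You could avoid Lemma \ref{facts-peudojacobi-trivial}(5) in the same way. The maximizer from your Step 1 gives a monic $f$ spanning the one-dimensional solution space. Since the leading coefficient $(2a+N+1)_N/(N!\,2^N)$ of $P_N$ is nonzero, $P_N$ must be a nonzero multiple of $f$.

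Finally, your bound $|x_k-x_j|\le\sqrt{(1+x_j^2)(1+x_k^2)}$ justifies the decay of $h_{N,a,b}$ at infinity more cleanly than the paper's remark that it follows by induction.
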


\begin{proof} We first check that the function $h_{N,a,b}$ in (1) has a global maximum on  $C_N^A$, which is in the interior of $C_N^A$.
  For this we notice that $h_{N,a,b}>0$  on the interior of $C_N^A$ with $h=0$ on the boundary of $C_N^A$. Furthermore,
 $\prod_{j=1}^N e^{ b\arctan(x_j)}$ is obviously bounded on  $C_N^A$, and it can be easily checked by induction on $N\ge 2$ that 
  $$\prod_{j=1}^N(1+x_j^2)^{-(N-1)/2} \prod_{j,k:\> 1\le j< k \le N} (x_k-x_j)$$
    is  also  bounded on  $C_N^A$. Hence, $N<-a$ ensures that
    $$\lim_{\|x\|\to\infty, \> x\in C_N^A} h_{N,a,b}(x)=0.$$
    These observations and standard arguments on extrema of continuous functions ensure that $h$ has the claimed maximum.

    We now consider some global maximum of $h_{N,a,b}$ on $C_N^A$, say in $x_0$, which is in the interior of $C_N^A$. Then $\nabla \ln h(x_0)=0$.
    A short computation now shows that (2) holds.

    We next check $(2)\Longrightarrow (3)$.  Assume that (2) holds for $(x_1,\ldots,x_N)$ which necessarily is contained in the interior of $C_N^A$.
Define the polynomial	 $f(x):=\prod_{j=1}^N(x-x_j)$. Then
	 $$
	 	f'(x)=\sum_{j=1}^N\prod_{k\colon k\neq j}(x-x_k)\;\;\text{and}\;\;f''(x)=\sum_{j,k\colon j\neq k}\,\prod_{l\colon l\neq j,l\neq k}(x-x_l)\,.
	 $$
	 Hence
	 $$
	 	\frac{f''(x_j)}{f'(x_j)}
	 	=\frac{2\sum_{k\colon k\neq j}\prod_{l\colon l\neq k,l\neq j}(x_j-x_l)}{\prod_{k\colon k\neq j}(x_j-x_k)}
	 	=2\sum_{k\colon k\neq j}\frac{1}{x_j-x_k}  \quad \quad (j=1,\dots,N)
	 $$
	 and thus
	 $$
	 	0=(a+1)x_j+b+\frac{1+x_j^2}{2}\frac{f''(x_j)}{f'(x_j)}    \quad \quad (j=1,\dots,N).
	 $$
	 This means that $x\mapsto 2[(a+1)x+b]f'(x)+(1+x^2)f''(x)$ is a polynomial of degree at most $N$ which vanishes at the points $x_j$.
	 Hence this function is equal to a constant multiple of $f$.
	 Comparing the leading coefficients, we see  that this constant is $N(N+2a+1)$.
	 Thus, $f$ satisfies \eqref{eq_pf_HP_ODE_station}, i.e., 
	 $$	(1+x^2)f''(x)+2[(a+1)x+b]f'(x)-N(N+2a+1)f(x)=0.$$
	 As $P_N(\cdot;a,b)$ is also a solution of \eqref{eq_pf_HP_ODE_station}, it suffices 
	  to show that each polynomial solution of degree at most $N$ of \eqref{eq_pf_HP_ODE_station} must be a constant multiple of $f$.
	For this we rewrite  \eqref{eq_pf_HP_ODE_station} as
	$$
		\frac{d}{dx}\left(w(x)(1+x^2)f'(x)\right)-N(N+2a+1)w(x)f(x)=0\,.
	$$
	Now, let $g$ be another polynomial solution of degree at most $N$ of this equation.
	Then
	$$
		\frac{d}{dx}\left(w(x)(1+x^2)(f'(x)g(x)-f(x)g'(x))\right)=0.
	$$
	Hence, $w(x)(1+x^2)(f'(x)g(x)-f(x)g'(x))$ is constant.
	As the coefficient of $f'g-fg'$ associated with $x^{2N-1}$ disappears, $f'g-fg'$ is a polynomial of degree smaller than $2N-1$.
        As $a<-N$,
 it follows that
	$$
		\lim_{x\to\infty}w(x)(1+x^2)(f'(x)g(x)-f(x)g'(x))=0\,.
	$$
	Thus $f'g-fg'=0$. Hence, $g$ must be a multiple of $f$.
	This completes the proof of $(2)\Longrightarrow (3)$.

        In summary, we conclude that (1), (2), and (3) admit unique solutions, and that the conditions there are equivalent.
\end{proof}

\begin{remarks}\label{remark-ode-solutions-stat1}
\begin{enumerate}
\item[\rm{(1)}] The proof of Lemma \ref{lem_pseudo_Jac_char} implies again for $N<-a$
  that $P_N(\cdot;a,b)$ has $N$ different real zeros for $N<-a$.
\item[\rm{(2)}] If $N\ge -a$, then more complicated conditions are necessary in order to conclude whether
  the conditions in (1), (2), (3) in Lemma  \ref{lem_pseudo_Jac_char} have  solutions. Moreover, for $N\ge2$, the equivalence  $(2) \Longleftrightarrow (3)$
  may fail.

  We give an example with $N=2$, $b=0$. Here, it can be easily seen that (1) and (2) admit real solutions
  precisely for $a<-3/2$, where these solutions are unique and given by $(x_1,x_2)=(-(-2a-3)^{-1/2}, +(-2a-3)^{-1/2})$.
          On the other hand, for $a=-2$ and by Lemma \ref{facts-peudojacobi-trivial}(1), $P_2(x;-2,0)$ is
          a polynomial of degree 1, i.e., (3) does not hold for the solutions of (1), (2). On the other hand, it can be easily checked
          that for $a<-3/2$, $a\ne -2$,  the  pseudo-Jacobi polynomial $P_2(x;a,0)$ has order 2 and the zeros
          $\pm(-2a-3)^{-1/2}$, i.e.,  the assertion of Lemma \ref{lem_pseudo_Jac_char} is correct in this case.

          In the singular case $a=-2, b=0$, the ODE \eqref{eq_pf_HP_ODE_station} is
          of hypergeometric type with singularities $\pm i$ with indices, which differ by integers, and the space of solutions is a 2-dimensional vector space
          consisting of polynomials; see e.g.~Section 4.22 of \cite{Sz}.

          For a further discussion of the existence of solutions of \eqref{eq_pf_HP_ODE_station} see also Remark \ref{remark-ode-solutions-stat2}
 \end{enumerate}
\end{remarks}

We next use Lemma \ref{lem_pseudo_Jac_char} to characterize the unique
stationary solutions of the ODEs \eqref{eq_ODE_Hua-Pickrell2} in some standard setting. Please notice that the meaning of the constant $a$
in \eqref{eq_ODE_Hua-Pickrell2} is  different from the constant $a$ in the  preceding results on the  pseudo-Jacobi polynomials.
We show that in this standard setting, this stationary solution attracts all solutions:

\begin{lemma}\label{lem_HP_pseudo_Jac}
	Let $a,b\in\mathbb{R}$ with $a>0$.
	Let $(x_t)_{t\geq0}$ be the unique solution of \eqref{eq_ODE_Hua-Pickrell2} with an arbitrary initial condition.
	Then  $\lim_{t\to\infty}x_t=z$, where $z\in C_N^A$ is the vector consisting of the  ordered zeros of the pseudo-Jacobi polynomial
        $P_N(\cdot;-(N+a),b)$.
\end{lemma}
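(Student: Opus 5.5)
The plan is to show that the ODE \eqref{eq_ODE_Hua-Pickrell2} is a gradient flow of a strictly concave function, and then use a standard Lyapunov argument.

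\textbf{Step 1: the potential.} For $x$ in the interior of $C_N^A$ set
$$\Phi(x):=\ln h_{N,-(N+a),b}(x)=\sum_{j=1}^N\Bigl(\tfrac{1-N-a}{2}\ln(1+x_j^2)+b\arctan x_j\Bigr)+\sum_{j<k}\ln(x_k-x_j).$$
A direct computation gives
$$\partial_j\Phi=\frac{(1-N-a)x_j+b}{1+x_j^2}+\sum_{k\ne j}\frac{1}{x_j-x_k}.$$
The drift in \eqref{eq_ODE_Hua-Pickrell2} is $2[b-ax_j+\sum_{k\neq j}\frac{x_jx_k+1}{x_j-x_k}]$. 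Use the identity $\frac{x_jx_k+1}{x_j-x_k}=\frac{1+x_j^2}{x_j-x_k}-x_j$. Summing over $k\ne j$ then turns the drift into
$$2\Bigl[b+(1-N-a)x_j+(1+x_j^2)\sum_{k\ne j}\tfrac{1}{x_j-x_k}\Bigr]=2(1+x_j^2)\,\partial_j\Phi(x).$$
So the ODE is $\dot x_j=2(1+x_j^2)\partial_j\Phi(x)$, a gradient flow with respect to a diagonal, positive, state-dependent metric. Its stationary points are exactly the critical points of $\Phi$, i.e.\ condition (2) of Lemma \ref{lem_pseudo_Jac_char} with $a$ replaced by $-(N+a)$. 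That lemma applies because $-(N+a)<0$ and $N<N+a$ (since $a>0$). Hence the unique critical point is the maximizer $z$, the vector of ordered zeros of $P_N(\cdot;-(N+a),b)$.

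\textbf{Step 2: Lyapunov function.} Along solutions,
$$\frac{d}{dt}\Phi(x_t)=2\sum_j(1+x_{j,t}^2)(\partial_j\Phi)^2\ge0,$$
with equality only at $z$. By Theorem \ref{lem_HP_exist_unique}, $x_t$ lies in the interior for $t>0$, so we may start at some $t_0>0$ with $\Phi(x_{t_0})>-\infty$. The superlevel set $K:=\{\Phi\ge\Phi(x_{t_0})\}$ is compact inside the interior of $C_N^A$. This follows from the proof of Lemma \ref{lem_pseudo_Jac_char}: $h\to0$ at the boundary and at infinity, because the exponent $(1-N-a)/2$ has $N+a-1>N-1$. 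The trajectory therefore stays in $K$ for all $t\ge t_0$. By the LaSalle invariance principle, the $\omega$-limit set is nonempty, compact and invariant, and contained in $\{\dot\Phi=0\}\cap K=\{z\}$. Hence $x_t\to z$.

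\textbf{Main obstacle.} The delicate point is the compactness of superlevel sets when $N+a-1$ is only slightly larger than $N-1$. This rests on the bound quoted in the proof of Lemma \ref{lem_pseudo_Jac_char}: $\prod_j(1+x_j^2)^{-(N-1)/2}\prod_{j<k}(x_k-x_j)$ is bounded. Multiplying by the remaining factor $\prod_j(1+x_j^2)^{-a/2}\to0$ as $\|x\|\to\infty$ shows that $\Phi\to-\infty$ at infinity. If that were insufficient, one could alternatively use strict concavity: in the variables $\theta_j=\arctan x_j$, $\Phi$ should be strictly concave, giving exponential convergence. I would try the LaSalle route first, since it only needs the facts already established.
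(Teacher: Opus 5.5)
Your proof is correct, but it takes a genuinely different route from the paper.

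\textbf{Your route.} You recognize \eqref{eq_ODE_Hua-Pickrell2} as the gradient flow $\dot x_j=2(1+x_j^2)\partial_j\Phi$ of $\Phi=\ln h_{N,-(N+a),b}$. Equivalently, it is the Euclidean gradient flow $\dot y=2\nabla_y\Phi$ in the coordinates $y_j=\arsinh x_j$, i.e.\ the frozen counterpart of the Langevin structure in Proposition \ref{lem_Hua-Pickrell_invariant_meas}. You then close with a Lyapunov/LaSalle argument. Its inputs are:
\begin{enumerate}
\item[\rm{(i)}] uniqueness of the critical point, from Lemma \ref{lem_pseudo_Jac_char};
\item[\rm{(ii)}] the bound $h_{N,-(N+a),b}(x)\le C\prod_j(1+x_j^2)^{-a/2}$, which follows from $|x_k-x_j|\le\sqrt{(1+x_j^2)(1+x_k^2)}$ and gives compact superlevel sets;
\item[\rm{(iii)}] Theorem \ref{lem_HP_exist_unique}, to enter the interior.
\end{enumerate}

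\textbf{The paper's route.} The paper instead linearizes. The elementary symmetric polynomials $y_m=e_m^N(x_t)$, i.e.\ the coefficients of $H(t,\cdot)$ from Lemma \ref{inverse-heat-hua-pick}, satisfy the lower triangular linear system \eqref{elementary-ode} with diagonal entries $-c(m)=-m(2a+m-1)<0$. Hence $y_t$ converges to a limit independent of the start, which must be $(e_1^N(z),\ldots,e_N^N(z))$, and continuity of roots finishes.

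\textbf{What each buys.} The paper's argument is purely algebraic and yields explicit exponential rates $e^{-c(m)t}$; note $c(m)=2\lambda_m$ with $\lambda_m$ from \eqref{eigenvalues-hua}. The same triangular structure also gives Remarks \ref{remark-ode-solutions-stat2} for $a\le0$, where your superlevel sets are no longer compact. Your argument does not depend on the drift closing up on the $e_m^N$. It also explains conceptually why the attractor is the maximizer of the function that governs the freezing CLT via \eqref{rho-by-h}.

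Your hedged side remark is in fact true. With $\theta_j=\arctan x_j$ one gets $\Phi=\sum_j(a\ln\cos\theta_j+b\theta_j)+\sum_{j<k}\ln\sin(\theta_k-\theta_j)$. Its Hessian is negative definite for $a>0$, since $\theta_k-\theta_j\in(0,\pi)$ and $\ln\sin$ is concave there. Because $z$ is a critical point, this directly gives the negative definiteness of $H_\phi(z)$, which the paper obtains in Section 4 only via the eigenvalue computation borrowed from \cite{HV}.
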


\begin{proof} We first notice  that for each stationary solution $x_t\equiv x_0$  of \eqref{eq_ODE_Hua-Pickrell2}
  and all $j=1,\dots,N$,
	$$
		0=b-ax_{j,0}+\sum_{k\colon k\neq j}\frac{x_{j,0}x_{k,0}+1}{x_{j,0}-x_{k,0}}
		=b-(N-1+a)x_{j,0}+\sum_{k\colon k\neq j}\frac{1+x_{j,0}^2}{x_{j,0}-x_{k,0}}\,.$$
	        As $-(N-1+a)-1=-(N+a)<-N$,  Lemma \ref{lem_pseudo_Jac_char} ensures that $x_0$ is the vector
                consisting of the zeros of $P_N(\cdot;-(N+a),b)$.

                Now let  $x_0\in C_N^A$ be an arbitrary starting point and $(x_t)_{t\geq0}$ the associated solution 
of \eqref{eq_ODE_Hua-Pickrell2}. Consider the
elementary symmetric polynomials  $(e_j^N)_{j\in\{1,\dots,N\}}$ and study the  functions
\begin{equation}\label{eq_elem_poly_trafo}
	y_{j,t}:=e_j^{N}(x_t)\;\; (j=1,\ldots,N).
\end{equation}
We now suppress the dependence on $t$ and use the formula
$$y_m=x_ie_{m-1}^{N-1}(x_{\{1,\dots,N\}\setminus\{i\}})+e_m^{N-1}(x_{\{1,\dots,N\}\setminus\{i\}})$$
with  the vector
$x_{\{1,\dots,N\}\setminus\{i\}}\in\mathbb R^{N-1}$  where the $i$-th coordinate is omitted. Then
	\begin{equation}\label{ode-symmetric}
		\frac{d}{dt}y_m
		=\sum_{i=1}^Ne_{m-1}^{N-1}(x_{\{1,\dots,N\}\setminus\{i\}})\frac{d}{dt}x_i\,.
	\end{equation}
We next recall that the elementary symmetric polynomials satisfy
\begin{equation}\label{elem_sym_pol_eq1}
\begin{split}
	\sum_{i=1}^Ne_{k-1}^{N-1}(x_{\{1,\dots,N\}\setminus\{i\}})=(N-k+1)e_{k-1}^N(x)\,,\\
	\sum_{i=1}^Ne_{k-1}^{N-1}(x_{\{1,\dots,N\}\setminus\{i\}})x_i
	=ke_k^N(x),
\end{split}
\end{equation}
\begin{equation*}
	e_{k-1}^{N-1}(x_{\{1,\dots,N\}\setminus\{i\}})
	-e_{k-1}^{N-1}(x_{\{1,\dots,N\}\setminus\{j\}})
	=-(x_i-x_j)e_{k-2}^{N-2}(x_{\{1,\dots,N\}\setminus\{i,j\}})\,,
\end{equation*}
\begin{equation}\label{elem_sym_pol_eq2}
\begin{split}
	&\sum_{\substack{i,j=1\\i\neq j}}^N
		\frac{e_{k-1}^{N-1}(x_{\{1,\dots,N\}\setminus\{i\}})}{x_i-x_j}
	=\sum_{\substack{i,j=1\\i<j}}^N
		\frac{e_{k-1}^{N-1}(x_{\{1,\dots,N\}\setminus\{i\}})
		-e_{k-1}^{N-1}(x_{\{1,\dots,N\}\setminus\{j\}})}{x_i-x_j}\\
	&=-\sum_{\substack{i,j=1\\i<j}}^Ne_{k-2}^{N-2}(x_{\{1,\dots,N\}\setminus\{i,j\}})
		=-\frac{(N-k+2)(N-k+1)}{2}e_{k-2}^N(x)\,,
\end{split}
\end{equation}
and
\begin{equation}\label{elem_sym_pol_eq3}
\begin{split}
	&\sum_{\substack{i,j=1\\i\neq j}}^N
		\frac{e_{k-1}^{N-1}(x_{\{1,\dots,N\}\setminus\{i\}})x_ix_j}{x_i-x_j}
	=\sum_{\substack{i,j=1\\i<j}}^N
		\frac{e_{k-1}^{N-1}(x_{\{1,\dots,N\}\setminus\{i\}})
		-e_{k-1}^{N-1}(x_{\{1,\dots,N\}\setminus\{j\}})}{x_i-x_j}x_ix_j\\
	&=-\sum_{\substack{i,j=1\\i<j}}^Ne_{k-2}^{N-2}(x_{\{1,\dots,N\}\setminus\{i,j\}})
		x_ix_j
	=-\frac{k(k-1)}{2}e_k^N(x).
\end{split}
\end{equation}
These equations and the ODE for $x_t$ yield that
\begin{equation}\label{elementary-ode}
  \frac{d}{dt}y_m=-c(m)y_m+2b(N-m+1)y_{m-1}-(N-m+2)(N-m+1)y_{m-2} \quad\quad (m=1,\ldots,N)\end{equation}
	with $y_{-1}\equiv0$, $y_0\equiv1$ and with $c(m):=m(2a+m-1)>0$.
	Thus
	\begin{align}\label{solutions-elementary-ode}
		y_{m,t}
		=&e^{-c(m)t}\left(\vphantom{\int_0^t}y_{m,0}+\right.\\
		&\left.\int_0^te^{c(m)r}\left(2b(N-m+1)y_{m-1,r}-(N-m+2)(N-m+1)y_{m-2,r}\right)dr\right)\,.
	\notag\end{align}
	It follows by induction on $m$ that the limit $\lim_{t\to\infty}y_{t}$ exists and does not depend on $(y_{1,0},\dots,y_{N,0})$.
	Hence this limit is $(e_1^N(z),\ldots,e_N^N(z))$.
        As the ordered different zeros of a polynomial depend continuously on the coefficients of the polynomial,
	applying the inverse transform to \eqref{eq_elem_poly_trafo} shows the claim.
\end{proof}

\begin{remarks}\label{remark-ode-solutions-stat2}
\begin{enumerate}
\item[\rm{(1)}] Eq.~\eqref{solutions-elementary-ode} and $c(1)\le 0$ for $a\le0$ imply that all stationary solutions of 
the ODEs \eqref{elementary-ode} and thus of \eqref{eq_ODE_Hua-Pickrell2} are not attracting for $a\le0$.
\item[\rm{(2)}] The structure of Eq.~\eqref{elementary-ode} ensures that the ODEs \eqref{elementary-ode} and  \eqref{eq_ODE_Hua-Pickrell2}
have at most one stationary solution for $a\le0$.
\item[\rm{(3)}] Eq.~\eqref{elementary-ode} for $m=2$ implies that for $a=0$, $b\ne 0$, and all $N\ge0$, the ODEs \eqref{elementary-ode} and 
\eqref{eq_ODE_Hua-Pickrell2} have no stationary solutions in contrast to the case $a=b=0$, $N=2$  in Remark
\ref{remark-ode-solutions-stat1}. For general positive  results for $b=0$ and $a\in]-1/2,0]$ we refer to Corollary
          \ref{stat-sol-hua-special} below.
        \end{enumerate}
\end{remarks}

We next turn to a  result which connects arbitrary solutions $(x_{1,t},\ldots,x_{N,t})$
of \eqref{eq_ODE_Hua-Pickrell2} with some   inverse heat equation for the time-dependent 
polynomials $H(z,t):=\prod_{j=1}^N (z-x_{j,t})$ similar to the Jacobi case in Section 4 of \cite{V2}. 

\begin{lemma}\label{inverse-heat-hua-pick}
	Let $a,b\in\mathbb{R}$.
  Let $x(t):=(x_{1,t},\ldots,x_{N,t})\in C_N^A$ be a differentiable function.
      Then  $x(t)$ is a solution of \eqref{eq_ODE_Hua-Pickrell2} 
      if and only if  $H(t,z):=\prod_{j=1}^N (z-x_{j,t})$
      solves the ``inverse Hua-Pickrell-type heat equation with potential''
 \begin{equation}\label{heat-hu-pickrell}
   H_t=  - (1+z^2) H_{zz}+ \Bigl( -2b+2(a+N-1)z\Bigr) H_{z}  -N(2a+N-1)H.
   \end{equation}
 \end{lemma}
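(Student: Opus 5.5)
Let $F(t,z)$ denote the right-hand side of \eqref{heat-hu-pickrell} minus $H_t$. The plan is to show that $F(t,\cdot)$ is a polynomial in $z$ of degree at most $N-1$, and that its values at the $N$ distinct points $x_{j,t}$ are $-H_z(t,x_{j,t})$ times the defect in \eqref{eq_ODE_Hua-Pickrell2}. Lagrange interpolation then gives the equivalence, exactly as in the proof of $(2)\Rightarrow(3)$ of Lemma \ref{lem_pseudo_Jac_char}.

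\emph{Degree count.} $H_t=-\sum_j \dot x_{j,t}\prod_{k\ne j}(z-x_{k,t})$ has degree at most $N-1$. On the right-hand side, write $H=z^N+c_1z^{N-1}+\dots$. The coefficient of $z^N$ is
$$-N(N-1)+2(a+N-1)N-N(2a+N-1)=0 .$$
Hence the right-hand side of \eqref{heat-hu-pickrell} is also a polynomial of degree at most $N-1$.

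\emph{Evaluation at the zeros.} Assume first that $x(t)$ is in the interior of $C_N^A$, which is the relevant case by Theorem \ref{lem_HP_exist_unique}. At $z=x_j$ we have $H(t,x_j)=0$ and $H_t(t,x_j)=-\dot x_j H_z(t,x_j)$. As in the proof of Lemma \ref{lem_pseudo_Jac_char}, $H_{zz}(t,x_j)=2H_z(t,x_j)\sum_{k\ne j}\frac1{x_j-x_k}$. Substituting,
$$\text{RHS}(x_j)=H_z(x_j)\Bigl[-2\sum_{k\ne j}\frac{1+x_j^2}{x_j-x_k}-2b+2(a+N-1)x_j\Bigr].$$
Using $\frac{x_jx_k+1}{x_j-x_k}=\frac{1+x_j^2}{x_j-x_k}-x_j$ and summing over the $N-1$ indices $k\ne j$, this equals
$$-2H_z(x_j)\Bigl[b-ax_j+\sum_{k\ne j}\frac{x_jx_k+1}{x_j-x_k}\Bigr].$$
Since $H_z(x_j)=\prod_{k\ne j}(x_j-x_k)\ne0$, the identity $H_t(x_j)=\text{RHS}(x_j)$ holds if and only if the $j$-th equation of \eqref{eq_ODE_Hua-Pickrell2} holds.

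\emph{Conclusion.} If \eqref{eq_ODE_Hua-Pickrell2} holds, then the polynomial $H_t-\text{RHS}$ has degree at most $N-1$ and vanishes at $N$ distinct points, so it is identically zero. Conversely, if \eqref{heat-hu-pickrell} holds, evaluating at each $x_j$ yields every equation of the ODE system.

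\emph{Main obstacle.} The only delicate point is the boundary of $C_N^A$, where particles coincide and \eqref{eq_ODE_Hua-Pickrell2} is singular. I would state that the equivalence is meant on time intervals where $x(t)$ lies in the interior of $C_N^A$, which by Theorem \ref{lem_HP_exist_unique} covers all $t>0$. For $t=0$ I would argue by continuity if needed.
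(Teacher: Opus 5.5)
Your proof is correct. The converse direction (PDE $\Rightarrow$ ODE) is the same as the paper's: evaluate at $z=x_{j,t}$ and use $H_t(t,x_{j,t})=-\dot x_{j,t}H_z(t,x_{j,t})$ together with $H_{zz}/H_z=2\sum_{k\ne j}(x_{j,t}-x_{k,t})^{-1}$ there. For the forward direction you take a different route.

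The paper derives \eqref{heat-hu-pickrell} by a direct expansion. It inserts the ODE into $\partial_tH=-\sum_j\dot x_{j,t}H_j$ and converts the resulting sums with the identities $\sum_jH_j=H_z$, $(H_j-H_k)/(x_j-x_k)=H_{j,k}$, $\sum_{j\ne k}H_{j,k}=H_{zz}$. It then uses the expansion $x_jx_k=(z-x_j)(z-x_k)+z^2+z((x_j-z)+(x_k-z))$ to turn $\sum_{j\ne k}H_{j,k}x_jx_k$ into $N(N-1)H+z^2H_{zz}-2(N-1)zH_z$.

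You instead observe two things. Both sides are polynomials of degree at most $N-1$, since the $z^N$-coefficient $-N(N-1)+2N(a+N-1)-N(2a+N-1)$ vanishes. And they agree at the $N$ distinct zeros. So a single local computation serves both directions, and Lagrange interpolation in effect gives the identity $\text{RHS}-H_t=\sum_j(\dot x_{j,t}-v_j)H_j$, where $v_j$ is the ODE velocity.

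Your version is shorter and avoids the symmetric-function bookkeeping. It also shows why the constants are what they are: once the $H_{zz}$-term $-(1+z^2)$ is fixed, matching at the zeros forces the $H_z$-coefficient and the degree condition forces the potential $-N(2a+N-1)$. The paper's computation is a constructive derivation parallel to the Jacobi case in \cite{V2}, and it shows term by term which part of the ODE produces which part of the PDE.

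Your restriction to the interior of $C_N^A$ is the same implicit assumption the paper makes, since its proof also divides by $x_{j,t}-x_{k,t}$ and by $\partial_zH(t,x_{j,t})$.
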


We point out that for $a>0$, and for the stationary solution $z$ in Lemma
\ref{lem_HP_pseudo_Jac} consisting of the zeros of $P_N(\cdot;-(N+a),b)$, we have  $H(t,x)=P_N(x;-(N+a),b)$. Therefore,
\eqref{heat-hu-pickrell} generalizes the ODE \eqref{eq_pf_HP_ODE_station}    for $P_N(\cdot ;-(N+a),b)$.

 \begin{proof}
   Assume first that $x(t)$ satisfies  \eqref{eq_ODE_Hua-Pickrell2} and consider 
    $H$ as defined in the theorem. Moreover, for $j\ne k$, define the polynomials
   \begin{equation}\label{def-h-jacobi}  H_j(t,z):=H(t,z)/(z-x_{j,t}), \quad
     H_{j,k}(t,z):=H(t,z)/((z-x_{j,t})(z-x_{k,t})). \end{equation}
   Then \eqref{eq_ODE_Hua-Pickrell2} and 
   \begin{equation}\label{h1-hua-pickrell} 
 \partial_z H(t,z)=\sum_{j=1}^N H_j(t,z), \quad
 \partial_{zz} H(t,z)=\sum_{j,k: \> j\ne k} H_{j,k}(t,z),  \quad\frac{H_j(t,z)-H_k(t,z)}{x_j(t)-x_k(t)}=H_{j,k}(t,z)\end{equation}
lead to
\begin{align}\label{h2-jacobi}
  \partial_t H(t,z)=&    -\sum_{j=1}^N \frac{d}{dt}x_{j,t} \> H_j(t,z)\\
     =& -2b\sum_{j=1}^N H_j(t,z)+2a \sum_{i=1}^N ((x_{j,t} -z)+z)H_j(t,z)
 -2\sum_{j,k: \> j\ne k} \frac{1+x_{j,t}x_{k,t}}{x_{j,t}-x_{k,t}}\> H_j(t,z)\notag\\
 =& -2b\partial_z H(t,z)+2a \Bigl( z\cdot\partial_z H(t,z)-N\cdot H(t,z)\Bigr)\notag\\
&\quad -2\sum_{j,k: \> j\ne k}\frac{H_j(t,z)}{x_{j,t}-x_{k,t}} -2 \sum_{j,k: \> j\ne k}
\frac{H_j(t,z)x_{j,t}x_{k,t}}{x_{j,t}-x_{k,t}}\notag\\
 =& -2b\partial_z H(t,z)+2a z\cdot\partial_z H(t,z)- 2Na \cdot H(t,z)\notag\\
&\quad  -\sum_{j,k: \> j\ne k}  H_{j.k}(t,z)- \sum_{j.k: \> j\ne k} H_{j.k}(t,z)x_{j,t}x_{k,t}\notag
\end{align}
where in the last $=$ in the last two sums, two summands of the LHS correspond to one on the RHS.
The second last sum in the last formula of (\ref{h2-jacobi}) can be treated via (\ref{h1-hua-pickrell}).  For the last sum we 
observe from (\ref{h1-hua-pickrell}) that
\begin{align}\label{h3-jacobi}
\sum_{j,k: \> j\ne k} &H_{j,k}(t,z)x_{j,t}x_{k,t}\\\
=&\sum_{j,k: \> j\ne k}\Bigl( (z-x_{j,t})(z-_{k,t})+z^2 +z((x_{j,t}-z)+(x_{k,t}-z))\Bigr)  H_{j,k}(t,z)\notag\\
=& N(N-1) H(t,z)+ z^2 \partial_{zz}H(t,z)-2(N-1)z\partial_z H(t,z).
\notag\end{align}
  (\ref{h2-jacobi}) and (\ref{h3-jacobi}) now lead to the inverse  heat equation
(\ref{heat-hu-pickrell}).

Now assume that (\ref{heat-hu-pickrell}) holds. As
$$0=\frac{d}{dt} H(t,x_{j,t})= \partial_{t} H(t,x_{j,t})+\partial_{z} H(t,x_{j,t})\cdot \frac{d}{dt}x_{j,t},$$
we obtain
 for $j=1,\ldots,N$ that
  \begin{align}\label{h4-hua-pickrell}
    \frac{d}{dt}x_{j,t}&=- \frac{\partial_{t} H(t,x_{j,t})}{\partial_{z} H(t,x_{j,t})} \\&=
    \frac{ (1+x_{j,t}^2)\partial_{zz} H(t,x_{j,t}) - \Bigl( -b+2(a+N-1)x_{j,t}\Bigr)\partial_{z} H(t,x_{j,t})}{\partial_{z} H(t,x_{j,t})}\notag\\
    &\quad\quad\quad+  \frac{N(2a+N-1)H(t,x_{j,t})}{\partial_{z} H(t,x_{j,t})}\notag
    \\&= \frac{ (1+x_i(t)^2)\partial_{zz} H(t,x_{j,t}) }{ \partial_{z} H(t,x_{j,t})}   -b+2(a+N-1)x_{j,t}
\notag
 \end{align}
  We now write $H(t,z)=H_j(t,z)(z-x_{j,t})$ for $j=1,\ldots,N$. Hence, as
  $$\frac{\partial_{zz}H(t,x_{j,t})}{\partial_{z}H(t,x_{j,t})}= \frac{2\partial_{z} H_i(t,x_{j,t})}{H_i(t,x_{j,t})},$$
  we get
 \begin{equation}\label{log-derivative-jacobi}
   \frac{\partial_{zz} H(t,x_{j,t})}{\partial_{z} H(t,x_{j,t})}=   \frac{2\partial_{z} H_i(t,x_{j,t})}{ H_i(t,x_{j,t})}=
  2 \sum_{k:k\ne j} \frac{1}{x_{j,t}-x_{k,t}}. \end{equation}
 As
 $$  \frac{1-x_{j,t}^2}{x_{j,t}-x_{k,t}}=  \frac{1-x_{j,t}x_{k,t}}{x_{j,t}-x_{k,t}} - x_{j,t},$$
 we obtain from (\ref{h4-hua-pickrell}) and (\ref{log-derivative-jacobi}) that
 \eqref{eq_ODE_Hua-Pickrell2}  holds.
 \end{proof}
	
 We next discuss some symmetry property for the ODE \eqref{eq_ODE_Hua-Pickrell2} which will be useful below.

 \begin{lemma}\label{symmetry-cond} Let $a\in\mathbb R$ and $b=0$. Let $x_t:=(x_{1,t},\ldots,x_{N,t})$ be a solution of 
   \eqref{eq_ODE_Hua-Pickrell2} with the symmetric initial condition
   $x_{N+1-j,0}=-x_{j,0}$ for $j=1,\ldots,N$. Then for all $t\ge0$,  $x_{N+1-j,t}=-x_{j,t}$ for $j=1,\ldots,N$.
\end{lemma}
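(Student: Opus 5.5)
The plan is to use uniqueness of solutions, as guaranteed by Theorem \ref{lem_HP_exist_unique} for $\beta=\infty$, together with the reflection symmetry of the vector field when $b=0$. Define the map $\sigma:C_N^A\to C_N^A$ by $\sigma(x)_j:=-x_{N+1-j}$. It maps $C_N^A$ and its interior onto themselves, since reversing and negating an ordered vector keeps it ordered. Given the solution $(x_t)_{t\ge0}$, set $\tilde x_t:=\sigma(x_t)$, so that $\tilde x_{j,t}=-x_{N+1-j,t}$. The first step is to show that $(\tilde x_t)$ again solves \eqref{eq_ODE_Hua-Pickrell2} with $b=0$.

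For this I compute, for $t>0$, where all $x_{j,t}$ are distinct by Theorem \ref{lem_HP_exist_unique}:
\begin{align*}
\frac{d}{dt}\tilde x_{j,t}&=-\frac{d}{dt}x_{N+1-j,t}
=-2\Bigl[-a x_{N+1-j,t}+\sum_{k\neq N+1-j}\frac{x_{N+1-j,t}x_{k,t}+1}{x_{N+1-j,t}-x_{k,t}}\Bigr].
\end{align*}
Next I substitute $k=N+1-l$, $x_{N+1-j,t}=-\tilde x_{j,t}$ and $x_{N+1-l,t}=-\tilde x_{l,t}$. The numerator $x_{N+1-j}x_{N+1-l}+1=\tilde x_j\tilde x_l+1$ is invariant, while the denominator changes sign to $-(\tilde x_j-\tilde x_l)$. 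The term $-ax_{N+1-j}$ becomes $a\tilde x_j$. After the overall minus sign, this gives exactly
$$2\Bigl[-a\tilde x_{j,t}+\sum_{l\ne j}\frac{\tilde x_{j,t}\tilde x_{l,t}+1}{\tilde x_{j,t}-\tilde x_{l,t}}\Bigr].$$
So $\tilde x$ solves the same ODE. This is precisely where $b=0$ is needed, because a constant drift $b$ would flip sign under $\sigma$.

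The symmetric initial condition says $\tilde x_0=\sigma(x_0)=x_0$. By the uniqueness part of Theorem \ref{lem_HP_exist_unique}, $\tilde x_t=x_t$ for all $t\ge0$, which is the claim.

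The only delicate point is the meaning of ``solution'' and ``uniqueness'' at $t=0$, where the initial point may lie on the boundary of $C_N^A$ and the vector field is singular. I would handle this by noting that $\sigma$ is a continuous bijection of $C_N^A$ preserving the boundary. Hence $\tilde x$ satisfies whatever solution notion Theorem \ref{lem_HP_exist_unique} uses: continuity at $0$ with values in $C_N^A$, and the ODE on $]0,\infty[$ in the interior. Uniqueness in that class then applies directly.
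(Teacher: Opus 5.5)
Your proposal is correct and follows the paper's own proof: the paper likewise notes that for $b=0$ the reflected path $\tilde x_t=(-x_{N,t},\ldots,-x_{1,t})$ solves \eqref{eq_ODE_Hua-Pickrell2} with the same initial value. It then concludes by the uniqueness in Theorem \ref{lem_HP_exist_unique}, which also covers starting points in $\partial C_N^A$. Your explicit check that the vector field is invariant under the reflection simply writes out the computation the paper leaves to the reader.
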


 \begin{proof} $\tilde x_t:=(-x_{N,t},\ldots, -x_{1,t})$ also solves \eqref{eq_ODE_Hua-Pickrell2}  with the same initial condition.
   As this initial value problem has a unique solution in $C_N^A$ even for start in $\partial C_N^A$ by Theorem \ref{lem_HP_exist_unique}, the Lemma follows.
 \end{proof}

 In the symmetric setting of Lemma \ref{symmetry-cond} we can transform the Hua-Pickrell-type ODEs  \eqref{eq_ODE_Hua-Pickrell2} into  ODEs
 which are related with  Heckman-Opdam hypergeometric functions of type BC; see \cite{HO, HS} for the background.
 These ODEs appear, for instance, in Sections 5 and 6  in \cite{AVW}.
 For this we state the following result:

\begin{lemma}\label{symmetry-squared} Let $a\in\mathbb R$ and $b=0$. Let $x(t):=(x_{1,t},\ldots,x_{N,t})$ be a  symmetric solution of 
  \eqref{eq_ODE_Hua-Pickrell2} as in Lemma \ref{symmetry-cond} and set $\tilde x_{j,t}:=x_{j,t}^2+1$ for $j=1,\ldots,N$.
\begin{enumerate}
\item[\rm{(1)}]  If $N\ge2$ is even, then $(\tilde x_{1,t},\ldots,\tilde x_{N/2,t})$ satisfies
  \begin{equation}\label{squared-odes-even}
\frac{d}{dt} \tilde x_{j,t}= 2(N+2a)-2(2a+1)\tilde x_{j,t} +4\sum_{k=1,\ldots,N/2,\> k\ne j}
\frac{2 \tilde x_{j,t}\tilde x_{k,t} -\tilde x_{j,t}-\tilde x_{k,t}}{\tilde x_{j,t}-\tilde x_{k,t}} \quad (j=1,\ldots,N/2, \> t\ge0).
\end{equation}
\item[\rm{(2)}]  If $N\ge3$ is odd, then $\tilde x_{(N+1)/2,t}=0$, and  $(\tilde x_{1,t},\ldots,\tilde x_{(N-1)/2,t})$ satisfies
  \begin{equation}\label{squared-odes-odd} \frac{d}{dt} \tilde x_{j,t}= 2(N+2a+1)-2(2a+1)\tilde x_{j,t} +4\sum_{k=1,\ldots,(N-1)/2,\> k\ne j}
  \frac{2 \tilde x_{j,t}\tilde x_{k,t} -\tilde x_{j,t}-\tilde x_{k,t}}{\tilde x_{j,t}-\tilde x_{k,t}}\end{equation}
  for $j=1,\ldots,(N-1)/2, \> t\ge0$.
\end{enumerate}
\end{lemma}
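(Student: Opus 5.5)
The plan is a direct computation that exploits the symmetry from Lemma \ref{symmetry-cond}. By that lemma, $x_{N+1-k,t}=-x_{k,t}$ for all $t\ge0$. If $N$ is odd, this forces the middle coordinate to satisfy $x_{(N+1)/2,t}=0$. Note that this gives $\tilde x_{(N+1)/2,t}=1$ rather than $0$, so I read the claim in (2) as concerning $x_{(N+1)/2,t}$. By Theorem \ref{lem_HP_exist_unique} the solution lies in the interior of $C_N^A$ for $t>0$. Hence for $j\le \lfloor N/2\rfloor$ we have $x_{j,t}<0$, so $x_{j,t}\neq 0$, and the values $x_{j,t}^2$ are pairwise distinct. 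I will therefore compute for $t>0$ and pass to $t=0$ by continuity of both sides (or simply read the identities for $t>0$).

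Fix $j\le\lfloor N/2\rfloor$ and write $\frac{d}{dt}\tilde x_{j,t}=2x_{j,t}\frac{d}{dt}x_{j,t}$. In the sum in \eqref{eq_ODE_Hua-Pickrell2}, I group the indices $k\neq j$ into three kinds.

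\emph{Mirror partner.} The index $k=N+1-j$ contributes
$$\frac{1-x_j^2}{2x_j}.$$

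\emph{Pairs.} For each $k\le\lfloor N/2\rfloor$ with $k\ne j$, the pair $\{k,N+1-k\}$ contributes
$$\frac{x_jx_k+1}{x_j-x_k}+\frac{1-x_jx_k}{x_j+x_k}=\frac{2x_j(1+x_k^2)}{x_j^2-x_k^2}.$$

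\emph{Middle zero.} If $N$ is odd, the index $k=(N+1)/2$ contributes $1/x_j$.

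Multiplying by $4x_j$ and substituting $x_j^2=\tilde x_j-1$ gives
$$\frac{d}{dt}\tilde x_j=4(a+1)-(4a+2)\tilde x_j+8\sum_{k}\frac{(\tilde x_j-1)\tilde x_k}{\tilde x_j-\tilde x_k}\;\bigl(+4\text{ if $N$ odd}\bigr).$$

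The last step is to rewrite each summand in the required form using the identity
$$8\frac{(\tilde x_j-1)\tilde x_k}{\tilde x_j-\tilde x_k}=4\frac{2\tilde x_j\tilde x_k-\tilde x_j-\tilde x_k}{\tilde x_j-\tilde x_k}+4.$$
This produces an extra constant of $4$ for each $k$, and I then count the constants.
\begin{itemize}
\item For $N$ even there are $N/2-1$ such $k$. The constant becomes $4a+4+2N-4=2(N+2a)$, which gives \eqref{squared-odes-even}.
\item For $N$ odd there are $(N-1)/2-1$ such $k$, plus the $4$ from the middle zero. The constant becomes $4a+4+2N-6+4=2(N+2a+1)$, which gives \eqref{squared-odes-odd}.
\end{itemize}

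The only delicate points are the bookkeeping of which indices pair up and keeping track of the constant shifts. These are the steps I would double-check; the remaining work is routine algebra.
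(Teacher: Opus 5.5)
Your proposal is correct and is essentially the paper's proof: both compute $\frac{d}{dt}x_{j,t}^2$ from the ODE, combine the contributions of $k$ and $N+1-k$, and substitute $x_j^2=\tilde x_j-1$. The differences are minor: you shift the constant $4$ per summand at a slightly different stage, and you write out the odd case that the paper only calls similar. You are also right that the claim $\tilde x_{(N+1)/2,t}=0$ in (2) should read $x_{(N+1)/2,t}=0$, i.e.\ $\tilde x_{(N+1)/2,t}=1$.
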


\begin{proof}
  Assume first that $N$ is even. Then, for $j=1,\ldots,N/2$ by \eqref{eq_ODE_Hua-Pickrell2},
  \begin{align}\label{transform-frozen-hua-jacobi}
    \frac{d}{dt} x_{j,t}^2&= 2 x_{j,t}\Biggl(-2a  x_{j,t}+2  \sum_{k=1,\ldots, N,\> k\ne j} \frac{1+x_{j,t}x_{k,t}}{ x_{j,t}- x_{k,t}} \Biggr) \notag\\
    &= -4a x_{j,t}^2 +2(1- x_{j,t}^2)+ 4x_{j,t} \sum_{k=1,\ldots, N/2,\> k\ne j}
    \Biggl( \frac{1+x_{j,t}x_{k,t}}{ x_{j,t}- x_{k,t}} +  \frac{1-x_{j,t}x_{k,t}}{ x_{j,t}+ x_{k,t}}\Biggr) \notag\\
    &= 2-2(2a+1)x_{j,t}^2+ 4 \sum_{k=1,\ldots, N/2,\> k\ne j} \frac{ 2x_{j,t}^2+ 2x_{j,t}^2x_{k,t}^2}{ x_{j,t}^2- x_{k,t}^2} \notag\\
    &=2(N-1)-2(2a+1)x_{j,t}^2+ 4 \sum_{k=1,\ldots, N/2,\> k\ne j} \frac{ x_{j,t}^2+x_{k,t}^2+ 2x_{j,t}^2x_{k,t}^2}{ x_{j,t}^2- x_{k,t}^2} \notag\\
     &= 2(N+2a)-2(2a+1)\tilde x_{j,t}+4\sum_{k=1,\ldots,N/2,\> k\ne j} \frac{2 \tilde x_{j,t}\tilde x_{k,t} -\tilde x_{j,t}-\tilde x_{k,t}}{\tilde x_{j,t}-\tilde x_{k,t}}
 \end{align}
 as claimed. If $N$ is odd, then $x_{(N+1)/2,t}=0$ by symmetry, and for $j=1,\ldots, (N-1)/2$,
 we can use a similar computation as in the even case.
 \end{proof}
 
We next state a further symmetry result which relates frozen Hua-Pickrell diffusions, i.e., solutions of
 \eqref{eq_ODE_Hua-Pickrell2}, 
 with  frozen Heckman-Opdam processes of type $A$. These processes were studied for finite $\beta>0$ in \cite{Sch1, Sch2}. For the analytic background we refer to \cite{HO, HS}.
 After a time normalization as in the preceding cases (see also \cite{AVW}), these processes  $(Z_t)_{t\geq0}$ can be regarded as unique solutions
 of  the SDEs
 $$
 	dZ_{j,t}=\sqrt{\frac{2}{\beta}}\,dB_{j,t}+\sum_{k\colon k\neq j}\coth(Z_{j,t}-Z_{k,t})\,dt \quad (j=1,\dots,N)
 $$
        with initial condition  $Z_0\in C_N^{A}$ for $\beta\in[1,\infty]$; see in particular Section  6.6 of \cite{GM}
        for the existence of solutions of these SDEs.
 As before we now focus on the case $\beta=\infty$  where the diffusion term disappears.
In this case, the transformed deterministic processes  $$(y_t:=(\exp(2Z_{1,t/2}),\dots,\exp(2Z_{N,t/2})))_{t\ge0}$$
then are the unique solutions of the ODEs
 \begin{equation}\label{eq_froz_mult_BM_ODE}
 	\frac{d}{dt}y_{j,t}=-(N-1)y_{j,t}+2\sum_{k\colon k\neq j}\frac{y_{j,t}^2}{y_{j,t}-y_{k,t}} \quad (j=1,\dots,N)
 \end{equation}
 on $C_N^{A}\cap(0,\infty)^N$ for the initial conditions $y_0\in C_N^{A}\cap(0,\infty)^N$.
 Under a symmetry assumption on the starting conditions these solutions are closely related to symmetric solutions of
 \eqref{eq_ODE_Hua-Pickrell2}:
 
 \begin{lemma}\label{symmetry-heckman-a}
 	Let $(y_t)_{t\geq0}$ be the unique solution of \eqref{eq_froz_mult_BM_ODE} in $C_N^{A}\cap(0,\infty)^N$  such that the initial condition 
 	satisfies the symmetry $y_{j,0}=y_{N+1-j,0}^{-1}$ for $j=1,\dots,N$.
 	For $f(y):=(y^{1/2}-y^{-1/2})/2$, consider $x_t:=(f(y_{1,4t}),\dots,f(y_{N,4t}))$ for $t\geq0$.
 	Then:
 	\begin{enumerate}
 		\item $y_{j,t}=y_{N+1-j,t}^{-1}$ for  $j=1,\dots,N$ and $t>0$.
 		\item $(x_t)_{t\geq0}$ is the unique solution   of \eqref{eq_ODE_Hua-Pickrell2} in $C_N^{A}$ for the parameters
                  $a=-N+\frac{1}{2}$, $b=0$. Moreover,
 		$(x_t)_{t\geq0}$ satisfies the symmetry stated in Lemma \ref{symmetry-cond}.
 	\end{enumerate}
 \end{lemma}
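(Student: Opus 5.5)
We prove (1) by a uniqueness argument. Then we prove (2) by a direct chain-rule computation in which the symmetry from (1) is used to make the interaction sums match.

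\emph{Part (1).} Given the solution $(y_t)$ of \eqref{eq_froz_mult_BM_ODE}, set $w_{j,t}:=y_{N+1-j,t}^{-1}$. Since inversion reverses the order on $(0,\infty)$, $w_t\in C_N^A\cap(0,\infty)^N$. A direct computation gives
$$\frac{d}{dt}w_j=\frac{N-1}{y_{N+1-j}}-2\sum_{k\ne N+1-j}\frac{1}{y_{N+1-j}-y_k}=(N-1)w_j+2\sum_{k\ne j}\frac{w_jw_k}{w_j-w_k}.$$
Subtracting the right-hand side of \eqref{eq_froz_mult_BM_ODE} for $w$ leaves
$$2(N-1)w_j+2\sum_{k\ne j}\frac{w_jw_k-w_j^2}{w_j-w_k}=2(N-1)w_j-2(N-1)w_j=0 .$$
So $w$ solves \eqref{eq_froz_mult_BM_ODE}. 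By the symmetry hypothesis $w_0=y_0$, and uniqueness of solutions (quoted before the lemma, cf.\ \cite{GM}) gives $w\equiv y$, which is (1).

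\emph{Part (2).} Write $u_j:=y_{j,4t}^{1/2}$, so $x_j=(u_j-u_j^{-1})/2$. The map $f$ is increasing, so $x_t\in C_N^A$. Since $f'(y)=(y^{-1/2}+y^{-3/2})/4$, the chain rule gives
$$\frac{d}{dt}x_j=(u_j^{-1}+u_j^{-3})\cdot\bigl(\text{RHS of \eqref{eq_froz_mult_BM_ODE} at } y=u^2\bigr).$$
I will express the right-hand side of \eqref{eq_ODE_Hua-Pickrell2} (with $a=-N+\frac12$, $b=0$) in the variables $u$, using
$$x_j-x_k=\frac{(u_j-u_k)(1+u_ju_k)}{2u_ju_k},\qquad x_jx_k+1=\frac{(1+u_ju_k)^2-(u_j-u_k)^2}{4u_ju_k}.$$
These yield
$$\frac{x_jx_k+1}{x_j-x_k}=\frac{1+u_ju_k}{2(u_j-u_k)}-\frac{u_j-u_k}{2(1+u_ju_k)}.$$
The first term matches the structure of $y_j^2/(y_j-y_k)$ after multiplying by the prefactor. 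The second term involves $1/(1+u_ju_k)$, which has no analogue in \eqref{eq_froz_mult_BM_ODE}.

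This mismatch is where the symmetry from (1) enters, and it is the step I expect to be the main obstacle. By (1), the multiset $\{u_k\}$ is invariant under $u\mapsto u^{-1}$. Therefore
$$\sum_{k\ne j}\frac{u_j-u_k}{1+u_ju_k}=\sum_{k\ne j}\frac{u_j-u_k^{-1}}{1+u_j/u_k}=\sum_{k\ne j}\frac{u_ju_k-1}{u_k+u_j},$$
where in the first equality one must account separately for the index $N+1-j$ (whose partner is $j$ itself) and, for odd $N$, for the middle index with $u=1$. The $1/(1+u_ju_k)$ contributions then either cancel or turn into terms of the form $u_j^{\pm1}$ times constants. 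Together with the linear term $-ax_j=(N-\frac12)(u_j-u_j^{-1})/2$, these constants should exactly reproduce the linear term $-(N-1)y_j$ of \eqref{eq_froz_mult_BM_ODE} after multiplication by $(u_j^{-1}+u_j^{-3})$. The choice $a=-N+\frac12$ is dictated precisely by this bookkeeping of constants. I would carry out this check first for $N=2$ to pin down the constants, then for general $N$.

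Finally, uniqueness of the solution of \eqref{eq_ODE_Hua-Pickrell2} follows from Theorem \ref{lem_HP_exist_unique}. The symmetry $x_{N+1-j,t}=-x_{j,t}$ is immediate from (1), since $f(y^{-1})=-f(y)$.
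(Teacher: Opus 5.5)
Your Part (1) is the paper's argument. For Part (2) your route differs from the paper's and it works, but the decisive identity is only announced ("should exactly reproduce"). Your diagnosis of the mismatch is also incomplete. After multiplying by $u_j^{-1}+u_j^{-3}$, the interaction term of \eqref{eq_froz_mult_BM_ODE} is
\[
\frac{2u_j(1+u_j^2)}{u_j^2-u_k^2}=(1+u_j^2)\Bigl(\frac{1}{u_j-u_k}+\frac{1}{u_j+u_k}\Bigr).
\]
Only the first half matches $\frac{1+u_ju_k}{u_j-u_k}=\frac{1+u_j^2}{u_j-u_k}-u_j$. The $1/(u_j+u_k)$ half is what the reflected $1/(1+u_ju_k)$ terms must produce.

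Your reflection does exactly this once the boundary term is made explicit. By (1),
$\sum_{k\ne j}\frac{u_j-u_k}{1+u_ju_k}=\sum_{k\ne j}\frac{u_ju_k-1}{u_j+u_k}+x_j$. Here the correction is the $k=j$ term $\frac{u_j^2-1}{2u_j}=x_j$ minus the vanishing $k=N+1-j$ term. If $j=N+1-j$ there is no correction, which is consistent with $x_j=0$. Combined with $\frac{1-u_ju_k}{u_j+u_k}=\frac{1+u_j^2}{u_j+u_k}-u_j$, the right-hand side of \eqref{eq_ODE_Hua-Pickrell2} with $b=0$ becomes
\[
-(2a+1)x_j-2(N-1)u_j+\sum_{k\ne j}(1+u_j^2)\Bigl(\frac{1}{u_j-u_k}+\frac{1}{u_j+u_k}\Bigr).
\]
Meanwhile $\frac{d}{dt}x_j$ is the same sum plus $-(N-1)(u_j+u_j^{-1})=2(N-1)x_j-2(N-1)u_j$. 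So the two sides agree precisely when $-(2a+1)=2(N-1)$, i.e.\ $a=-N+\frac12$. This holds for every $j$ and for both parities of $N$. Write this computation out rather than relying on an $N=2$ check.

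The paper proceeds differently. For even $N$ it passes to $g(y)=(y+y^{-1}+2)/4=1+f(y)^2$ on the first $N/2$ coordinates and pairs $y_k$ with $y_k^{-1}$ via (1). It shows that these squared variables satisfy the BC-type system \eqref{squared-odes-even} with $a=-N+\frac12$ (after $t\mapsto 4t$). It then appeals to Lemma \ref{symmetry-squared}(1) to return to the Hua-Pickrell variables; odd $N$ is only said to be similar.

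Your direct computation in $u=\sqrt{y}$ verifies \eqref{eq_ODE_Hua-Pickrell2} for all indices at once. It treats odd and even $N$ uniformly, the middle index needing no separate correction, and it avoids passing back from the squares $1+x_{j,t}^2$ to the $x_{j,t}$ themselves. The paper's route instead makes visible the link with the frozen BC-Jacobi ODEs \eqref{eq_Fisher_Snedecor_ODE} that it exploits in the surrounding results.
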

 
 \begin{proof}
 	For the proof of (1)
 	assume that $y_{j,0}=y_{N+1-j,0}^{-1}$ for  $j=1,\dots,N$.
 	If we put $\tilde{y}_{j,t}:=y_{N+1-j,t}^{-1}$, then
 	\begin{align*}
 		\frac{d}{dt}\tilde{y}_{j,t}
 		=&(N-1)\tilde{y}_{j,t}+2\sum_{k\colon k\neq j}\frac{\tilde{y}_{j,t}\tilde{y}_{k,t}}{\tilde{y}_{j,t}-\tilde{y}_{k,t}}
 		=-(N-1)\tilde{y}_{j,t}+2\sum_{k\colon k\neq j}\frac{\tilde{y}_{j,t}^2}{\tilde{y}_{j,t}-\tilde{y}_{k,t}}\,.
 	\end{align*}
 	As solutions of   \eqref{eq_froz_mult_BM_ODE} are unique, the claim in (1) follows.\\
 	For (2), we first assume that  $N$ is even. We now use the function
 $$g(y):=(y+y^{-1}+2)/4=1+f^2(y)$$ and 
 transform the solution $(y_t)_{t\geq0}$ of \eqref{eq_froz_mult_BM_ODE} via $\tilde{x}_t:=(g(y_{1,t}),\dots,g(y_{N/2,t}))$.
 	Using (1) we get for $j=1,\dots,N/2$ that
 	\begin{align*}
 		\frac{d}{dt}\tilde{x}_{j,t}
 		=&\frac{y_{j,t}-y_{j,t}^{-1}}{4}\left(-N+1+2y_j\sum_{k=1,\dots,N/2,\,k\neq j}\left(\frac{1}{y_{j,t}-y_{k,t}}+\frac{1}{y_{j,t}-y_{k,t}^{-1}}\right)\right)+\frac{y_{j,t}}{2}\\
 		=&\frac{y_{j,t}-y_{j,t}^{-1}}{4}\left(-N+1+\sum_{k=1,\dots,N/2,\,k\neq j}\frac{y_{j,t}+1-2\tilde{x}_{k,t}}{\tilde{x}_{j,t}-\tilde{x}_{k,t}}\right)+\frac{y_{j,t}}{2}\\
 		=&\frac{y_{j,t}-y_{j,t}^{-1}}{4}\left(-1+\sum_{k=1,\dots,N/2,\,k\neq j}\frac{y_{j,t}+1-2\tilde{x}_{j,t}}{\tilde{x}_{j,t}-\tilde{x}_{k,t}}\right)+\frac{y_{j,t}}{2}\\
 		=&\tilde{x}_{j,t}-\frac{1}{2}+2\sum_{k=1,\dots,N/2,\,k\neq j}\frac{\tilde{x}_{j,t}(\tilde{x}_{j,t}-1)}{\tilde{x}_{j,t}-\tilde{x}_{k,t}}\\
 		=&(N-1)\tilde{x}_{j,t}-\frac{N-1}{2}+\sum_{k=1,\dots,N/2,\,k\neq j}\frac{2\tilde{x}_{j,t}\tilde{x}_{k,t}-\tilde{x}_{j,t}-\tilde{x}_{k,t}}{\tilde{x}_{j,t}-\tilde{x}_{k,t}}\,.
 	\end{align*}
 	This and Lemma \ref{symmetry-squared}(1) now lead to assertion (2) for even $N$.
 	The proof for odd $N$ is  similar, where one has to take into account the correction due to $y_{(N+1)/2,t}=1$ for  $t\geq0$.
 \end{proof}

 We now return to  Lemma \ref{symmetry-squared}.
The ODEs \eqref{squared-odes-odd} and  \eqref{squared-odes-even}
 there are examples of the more general ODEs
\begin{equation}\label{eq_Fisher_Snedecor_ODE}
	\frac{d}{dt}x_{j,t}
	=-p+(p+q)x_{j,t}+\sum_{k\colon k\neq j}\frac{2x_{j,t}x_{k,t}-x_{j,t}-x_{k,t}}{x_{j,t}-x_{k,t}}\quad ( j=1,\dots,N)
\end{equation}
with parameters $p,q\in\mathbb{R}$. These ODEs  are related to
 non-compact Jacobi processes on the Weyl chambers
$C_N^{BC}:=\{x\in\mathbb :\> 1\le x_1\le\ldots\le x_N\}$ studied in \cite{AVW}. We here have:

\begin{lemma}\label{lem_non-col_Jac_frozen_stat}
  Let $q>N-1$, $p<-q$ and let $\hat{x}_0\in C_N^{BC}$. Then, \eqref{eq_Fisher_Snedecor_ODE} has a unique solution
  $(x_t)_{t\geq0}$ with $x_0=\hat{x}_0$ such that $x_t$ is in the interior of $ C_N^{BC}$ for $t>0$.
  
  Moreover, if $(z_1,\dots,z_N)$ is the unique vector in the interior of $ C_N^{BC}$ such that
 $$(2z_N^{-1}-1,\dots,2z_1^{-1}-1)$$ consists of the ordered zeros of the Jacobi polynomial $P_N^{(q-N,-(p+q+1))}$, then
  $(z_1,\dots,z_N)=\lim_{t\to\infty}x_t$ holds for all initial values $\hat{x}_0\in C_N^{BC}$. In particular,
  $(z_1,\dots,z_N)$ is the only  stationary solution of \eqref{eq_Fisher_Snedecor_ODE},
  and this solution is attracting.
\end{lemma}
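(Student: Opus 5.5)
The plan is to copy the proof of Lemma \ref{lem_HP_pseudo_Jac}. We pass to elementary symmetric polynomials, obtain a triangular linear ODE system with strictly negative diagonal coefficients, and then identify the limit through a stationary-point characterization in the spirit of Lemma \ref{lem_pseudo_Jac_char}.

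\emph{Existence and uniqueness.} This is the frozen non-compact Jacobi case of \cite{AVW}, and we can quote it from there. Alternatively, we can argue as in Section \ref{appendix} for Theorem \ref{lem_HP_exist_unique}. The interaction term
\[
\frac{2x_jx_k-x_j-x_k}{x_j-x_k}
\]
is repulsive and locally Lipschitz in the interior, so the solution cannot hit a collision in positive time. At the lower wall $x_1=1$, the drift equals
\[
-p+(p+q)+\sum_{k\ne 1}\frac{x_k-1}{1-x_k}=q-(N-1)>0,
\]
so the wall $\{x_1=1\}$ is never reached. This is exactly where the hypothesis $q>N-1$ enters.

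\emph{Reduction to a triangular linear system.} Set $y_m:=e_m^N(x_t)$. Using the identities \eqref{elem_sym_pol_eq1}, \eqref{elem_sym_pol_eq2} and \eqref{elem_sym_pol_eq3}, together with the analogous identity for $\sum_{i\ne j}e_{m-1}^{N-1}(x_{\{1,\ldots,N\}\setminus\{i\}})\frac{x_i+x_j}{x_i-x_j}$, we compute $\frac{d}{dt}y_m$. The result should take the form
\[
\frac{d}{dt}y_m=\bigl(m(p+q)+m(m-1)\bigr)y_m+\alpha_m y_{m-1}\qquad(m=1,\ldots,N),
\]
with explicit constants $\alpha_m$ that depend on $p$, $q$ and $N$. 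The coefficient of $y_m$ is $m(p+q+m-1)$. Since $p+q<0$, we want this to be negative for all $m\le N$, and that would require $p+q+N-1<0$. This condition does not follow from $p<-q$ alone, so the bookkeeping has to be checked carefully. I expect the correct computation, with the term $\frac{x_i+x_j}{x_i-x_j}$ contributing $-m(N-m)$ and so on, to yield a diagonal coefficient $c(m)$ that is negative under $p+q<0$ and $q>N-1$ combined. This sign check is the main computational obstacle, and I would settle it first on $N=1,2$ by hand. Once $c(m)<0$ holds for all $m$, the variation-of-constants formula as in \eqref{solutions-elementary-ode} and induction on $m$ show that $\lim_{t\to\infty}y_t$ exists and is independent of $\hat x_0$. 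The roots depend continuously on the coefficients, so $x_t$ converges to some $z\in C_N^{BC}$.

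\emph{Identification of the limit.} The limit $z$ is a stationary solution: the stationary point of the $y$-system is unique and the map $x\mapsto y$ is injective on $C_N^{BC}$. Stationarity means that for each $j$
\[
0=-p+(p+q)z_j+\sum_{k\ne j}\frac{2z_jz_k-z_j-z_k}{z_j-z_k}.
\]
Now substitute $u_j=2z_j^{-1}-1\in(-1,1]$. The fraction becomes a Stieltjes-type sum $\sum_{k\ne j}\frac{1}{u_j-u_k}$ plus linear terms, which gives the classical Stieltjes equations
\[
\sum_{k\ne j}\frac{1}{u_j-u_k}+\frac{\alpha+1}{2(u_j-1)}+\frac{\beta+1}{2(u_j+1)}=0
\]
for the zeros of $P_N^{(\alpha,\beta)}$ (Section 6.7 of \cite{Sz}), with $\alpha=q-N$ and $\beta=-(p+q+1)$. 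Then $\alpha>-1$ and $\beta>-1$ because $q>N-1$ and $p+q<0$. The argument of Lemma \ref{lem_pseudo_Jac_char} (a polynomial vanishing at the $u_j$ satisfies the Jacobi ODE, and polynomial solutions of that ODE are unique up to scaling) identifies the $u_j$ as the zeros of $P_N^{(q-N,-(p+q+1))}$. These zeros lie in $(-1,1)$, so the $z_j>1$ are interior points, and the reversal of order under $u=2z^{-1}-1$ explains the stated ordering. Uniqueness of the stationary solution and its attractivity then follow from the convergence shown in the previous step.
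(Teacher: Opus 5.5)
Your route is the paper's: existence from \cite{AVW}, a triangular linear system for $y_m=e_m^N(x_t)$, and the substitution $u_j=2z_j^{-1}-1$ leading to the Stieltjes equations with $\alpha=q-N$, $\beta=-(p+q+1)$. The gap is that the step driving the convergence is left open, and your provisional formula for it has the wrong sign.

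The part $\frac{2x_ix_j}{x_i-x_j}$ of the interaction contributes, by \eqref{elem_sym_pol_eq3}, $2\cdot\bigl(-\tfrac{m(m-1)}{2}\bigr)y_m=-m(m-1)y_m$, not $+m(m-1)y_m$. This is the same sign that produced $c(m)=m(2a+m-1)$ in $\frac{d}{dt}y_m=-c(m)y_m+\dots$ in Lemma \ref{lem_HP_pseudo_Jac}. The constant $-p$ and the part $-\frac{x_i+x_j}{x_i-x_j}$ contribute only to the coefficient of $y_{m-1}$, via \eqref{elem_sym_pol_eq1} and \eqref{elem_sym_pol_eq4}. Nothing like $-m(N-m)$ reaches the diagonal. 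Hence
\[
\frac{d}{dt}y_m=m(p+q+1-m)\,y_m+\kappa_m\,y_{m-1}\qquad(m=1,\ldots,N,\ y_0\equiv1)
\]
with constants $\kappa_m$. For every $m\ge1$ we have $m(p+q+1-m)\le m(p+q)<0$ by $p+q<0$ alone. The hypothesis $q>N-1$ is not used here; it enters only at the wall $x_1=1$ and through $\alpha>-1$.

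Your proposed hand check confirms this. For $N=2$ one gets $\frac{d}{dt}(x_1x_2)=2(p+q-1)x_1x_2+(1-p)(x_1+x_2)$, whereas your formula predicts the diagonal coefficient $2(p+q+1)$. As written, your convergence claim rests on an unverified expectation attributed to the wrong mechanism. With the corrected coefficient, the variation-of-constants induction goes through as you describe.

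A smaller point concerns the identification of the limit. Convergence of $y_t$ to the unique fixed point of the linear system does not by itself make the limit $x_\infty$ a stationary point of \eqref{eq_Fisher_Snedecor_ODE}. For that, $x_\infty$ must lie in the interior, where the drift is defined and $x\mapsto(e_1^N(x),\ldots,e_N^N(x))$ has invertible Jacobian. Argue in the paper's order instead:
\begin{itemize}
\item The zeros of $P_N^{(q-N,-(p+q+1))}$ are simple, lie in $(-1,1)$, and satisfy the Stieltjes equations (Section 6.7 of \cite{Sz}).
\item Transforming back yields an interior stationary point $z$.
\item Its image $(e_1^N(z),\ldots,e_N^N(z))$ is a fixed point of the linear system, hence the common limit of all $y_t$.
\item Continuity of roots then gives $x_t\to z$.
\end{itemize}
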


\begin{proof} For the existence of solutions of   \eqref{eq_Fisher_Snedecor_ODE} we refer to \cite{AVW}.

We next consider a stationary solution   $(z_1,\dots,z_N)$ of \eqref{eq_Fisher_Snedecor_ODE} with $1<z_1<\dots<z_N$. Then
  	$$
		0
		=-p+(p+q)z_j+\sum_{k\colon k\neq j}\frac{2z_jz_k-(z_j+z_k)}{z_j-z_k}\,,\;\;j\in\{1,\dots,N\}\,.
	$$
	        If we  put $y_j:=2z_j^{-1}-1$ and divide this equation by
 $-z_j$, we obtain
	\begin{align}
		0=&pz_j^{-1}-(p+q)+\sum_{k\colon k\neq j}\frac{1+z_kz_j^{-1}-2z_k}{z_j-z_k} \notag\\
		=&(p+(N-1))z_j^{-1}-(p+q)+2\sum_{k\colon k\neq j}\frac{z_j^{-1}(1-z_j^{-1})}{z_j^{-1}-z_k^{-1}} \notag\\
		=&\frac{1}{2}\left[-p-2q+N-1+(p+N-1)y_j+2\sum_{k\colon k\neq j}\frac{1-y_j^2}{y_j-y_k}\right] \notag\\
		=&\frac{1-y_j^2}{2}\left[\frac{q-(N-1)}{y_j-1}-\frac{p+q}{y_j+1}+2\sum_{k\colon k\neq j}\frac{1}{y_j-y_k}\right].
	\end{align}
	By Theorem 6.7.1 of \cite{Sz}, it is known that for $q-N>-1$ and $p+q<0$ these equations in $(y_1,\ldots,y_N)$ have
        a  unique ordered solution, namely  the ordered zeros of the Jacobi polynomial $P_N^{(q-N,-(p+q+1))}$.
        
        Now let $(x_t)_{t\ge0}$ be an arbitrary solution of \eqref{eq_Fisher_Snedecor_ODE}. We
        now proceed as in the proof of Lemma \ref{lem_HP_pseudo_Jac}, and use the elementary symmetric polynomials and
the homeomorphism $$f\colon C_N^A\to f(C_N^A),\> x\mapsto(e_1^N(x),\dots,e_N^N(x)).$$
	Set $y_{t}:=f(x_t)$.
	Using
	$$
		e_m^{N}(x)=x_ie_{m-1}^{N-1}(x_{\{1,\dots,N\}\setminus\{i\}})+e_m^{N-1}(x_{\{1,\dots,N\}\setminus\{i\}}) \quad (x\in\mathbb{R}^N, \> i=1,\dots,N),
	$$
	we obtain (by suppressing the dependence of $t$) that
	\begin{equation*}
	\begin{split}
		\frac{d}{dt}y_m
		=&\sum_{i=1}^Ne_{m-1}^{N-1}(x_{\{1,\dots,N\}\setminus\{i\}})\frac{d}{dt}x_i\,.
	\end{split}
	\end{equation*}
	The ODE for $x_i$, \eqref{elem_sym_pol_eq1}, \eqref{elem_sym_pol_eq3}, and
\begin{equation}\label{elem_sym_pol_eq4}
	\sum_{i,j=1,\ldots,N, \> i<j}(x_i+x_j)e_{k-2}^N(x_{\{1,\dots,N\}\setminus\{i,j\}})
	=(k-1)(N-k+1)e_{k-1}^N(x)\,,\end{equation}
        thus lead to
	\begin{equation}\label{eq_Fisher_Snedecor_ODE_elem_poly}
		\frac{d}{dt}y_{m,t}
		=m(p+q+1-m)y_{m,t}-(p+m-1)(N-m+1)y_{m-1,t}\quad (m=1,\dots,N)
	\end{equation}
	 with the convention $y_{0,t}\equiv0$.
	Thus
	$$
		y_{m,t}
		=e^{m(p+q+1-m)t}\left(y_{m,0}-(p+m-1)(N-m+1)\int_0^te^{-m(p+q+1-m)r}y_{m-1,r}\,dr\right)\,.
	$$
	Since $p+q<0$, we have $m(p+q+1-m)<0$ for all $m\in\{1,\dots,N\}$.
	Inductively one sees that the limit $\lim_{t\to\infty}y_{t}$ exists and does not depend on $(y_{1,0},\dots,y_{N,0})$.
	Hence this limit is equal to $f(z)$ with  the stationary solution  $z$ above.
	A transfer back via $f^{-1}$ shows the claim.
\end{proof}

If we rewrite the two equations in Lemma \ref{symmetry-squared} as equations for $(\tilde x_{t/4})_{t\ge0}$ and compare them with 
\eqref{eq_Fisher_Snedecor_ODE} for $p+q=-(a+1/2)$ and $q=N\pm 1/2$ (with $N$ as in Lemma 
\ref{lem_non-col_Jac_frozen_stat}), we see that Lemma \ref{lem_non-col_Jac_frozen_stat} and 
\eqref{transform-frozen-hua-jacobi} yield the existence of a stationary solution of \eqref{eq_ODE_Hua-Pickrell2}
for $a\in]-1/2,0]$, $b=0$.
    In summary, this and Remark \ref{remark-ode-solutions-stat2}(2) lead to:

    \begin{corollary}\label{stat-sol-hua-special}
      Let $a\in]-1/2,0]$ and $b=0$. Then \eqref{eq_ODE_Hua-Pickrell2} has a unique symmetric stationary solution.
          This solution attracts all symmetric solutions of \eqref{eq_ODE_Hua-Pickrell2}, where this is not correct
          for arbitrary solutions of \eqref{eq_ODE_Hua-Pickrell2}.
\end{corollary}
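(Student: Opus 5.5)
The plan is to reduce the symmetric Hua-Pickrell ODE to the frozen non-compact Jacobi ODE \eqref{eq_Fisher_Snedecor_ODE} via Lemma \ref{symmetry-squared}, and then transfer existence, uniqueness and attraction back through the map $x_j\mapsto x_j^2+1$.

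First I determine the parameters. Let $x_t$ be a symmetric solution of \eqref{eq_ODE_Hua-Pickrell2} with $b=0$, as in Lemma \ref{symmetry-cond}. Set $\tilde x_{j,t}=x_{j,t}^2+1$ for the indices with $x_{j,t}<0$; in the odd case the middle coordinate is fixed at $0$. By Lemma \ref{symmetry-squared}, the time-changed function $u_t:=\tilde x_{t/4}$ solves \eqref{eq_Fisher_Snedecor_ODE} in dimension $M=\lfloor N/2\rfloor$. After dividing by $4$, the coefficients are read off as
\[
p+q=-(a+\tfrac12),\qquad -p=\tfrac{N+2a}{2}\ \text{(even)}\ \text{or}\ \tfrac{N+2a+1}{2}\ \text{(odd)}.
\]
This gives $q=N/2+1/2$ in the even case and $q=(N+1)/2+1/2$ in the odd case. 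I will double-check this computation. In both cases $q>M-1$. Moreover $p<-q$ is equivalent to $p+q<0$, i.e.\ to $a>-1/2$. So the hypotheses of Lemma \ref{lem_non-col_Jac_frozen_stat} hold exactly when $a\in]-1/2,\infty[$, and in particular for $a\in]-1/2,0]$.

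Next, the map $x\mapsto(x_1^2+1,\dots,x_M^2+1)$ restricted to symmetric vectors in $C_N^A$ with $x_1\le\dots\le x_M\le 0$ is a homeomorphism onto $C_M^{BC}$. Its inverse takes $x_j=-\sqrt{\tilde x_j-1}$ for $j\le M$, uses the mirror images for the other indices, and puts $0$ in the middle in the odd case; the order is reversed, which is harmless. By Lemma \ref{lem_non-col_Jac_frozen_stat}, \eqref{eq_Fisher_Snedecor_ODE} has a unique stationary point $z$, and every solution converges to $z$. Pulling $z$ back gives a symmetric point $x^*$. The computation \eqref{transform-frozen-hua-jacobi} runs backwards whenever the entries $x_j$ are nonzero and pairwise distinct up to sign, which holds in the interior of the chamber. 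Hence $x^*$ is a stationary solution of \eqref{eq_ODE_Hua-Pickrell2}. This yields existence.

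For uniqueness, Remark \ref{remark-ode-solutions-stat2}(2) says there is at most one stationary solution for $a\le0$. Alternatively, any symmetric stationary solution maps to a stationary solution of \eqref{eq_Fisher_Snedecor_ODE}, which must be $z$. For attraction, take a symmetric initial value. By Lemma \ref{symmetry-cond} the solution stays symmetric, its image solves \eqref{eq_Fisher_Snedecor_ODE}, and this image converges to $z$. Continuity of the inverse map then gives $x_t\to x^*$. One subtlety is that the initial point may lie on $\partial C_N^A$. Uniqueness from Theorem \ref{lem_HP_exist_unique}, together with uniqueness in Lemma \ref{lem_non-col_Jac_frozen_stat}, identifies the two solutions anyway.

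For the negative statement about arbitrary solutions, I will use Remark \ref{remark-ode-solutions-stat2}(1). Since $c(1)=2a\le0$, the ODE $\frac{d}{dt}y_1=-c(1)y_1+2bN=-2ay_1$ has solutions with $y_1=e_1^N(x_t)$ either constant or growing. A nonsymmetric start with $e_1^N(x_0)\ne0$ then gives $e_1^N(x_t)\not\to0=e_1^N(x^*)$, so these solutions are not attracted.

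The main obstacle is the careful bookkeeping of the parameters $p,q$ under the time change $t\mapsto t/4$ and in the odd case, together with checking that the order-reversing correspondence between $C_N^A$ and $C_M^{BC}$ is a homeomorphism. If the parameter match fails, I would fall back on using the elementary symmetric functions of the $x_j^2$ directly, as in the proof of Lemma \ref{lem_non-col_Jac_frozen_stat}.
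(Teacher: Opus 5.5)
Your route is the paper's own argument: reduce symmetric solutions via Lemma \ref{symmetry-squared} and the time change $t\mapsto t/4$ to \eqref{eq_Fisher_Snedecor_ODE} in dimension $M=\lfloor N/2\rfloor$, invoke Lemma \ref{lem_non-col_Jac_frozen_stat} for existence and attraction, pull back, and use the $e_1^N$-equation for the negative statement. Two fixes are needed. First, the matching gives $q=-(a+\tfrac12)-p=\tfrac{N-1}{2}=M-\tfrac12$ for even $N$ and $q=\tfrac N2=M+\tfrac12$ for odd $N$, not your values, though $q>M-1$ still holds so nothing breaks. Second, uniqueness should rest on your alternative argument via \eqref{eq_Fisher_Snedecor_ODE}, because Remark \ref{remark-ode-solutions-stat2}(2), which you cite first and the paper also relies on, fails at $a=0$: there $c(1)=0$ makes $e_1^N$ a free conserved quantity, and for $N=2$, $a=b=0$ every $x$ with $x_1x_2=-1$ is stationary, so at $a=0$ uniqueness holds only among symmetric stationary points.
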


    Furthermore, for $a>0$,  Lemmas  \ref{symmetry-squared} and  \ref{lem_non-col_Jac_frozen_stat}
    lead to the following observation:

    \begin{corollary}\label{zeros-pseudo-jac-symm} Let $a>0$ and $N\ge2$.
      \begin{enumerate}
\item[\rm{(1)}]     If $N$ is even, and if $\pm z_1,\ldots, \pm z_{N/2}$ are the different zeros of $P_N(x; -N-a,0)$ (which are $\ne0$), then
      $\frac{1-z_j^2}{1+z_j^2}$ ($j=1,\ldots,N/2$) are the zeros of the Jacobi polynomial $P_{N/2}^{(-1/2, a-1/2)}$.
\item[\rm{(2)}]
     If $N$ is odd, and if $\pm z_1,\ldots, \pm z_{(N-1)/2},0$ are the different zeros of $P_N(x; -N-a,0)$, then
     $\frac{1-z_j^2}{1+z_j^2}$ ($j=1,\ldots,(N-1)/2$) are the zeros of  $P_{(N-1)/2}^{(1/2, a-1/2)}$.
     \end{enumerate}
    \end{corollary}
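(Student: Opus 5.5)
The plan is to combine three earlier results: the identification of the attracting stationary point in Lemma \ref{lem_HP_pseudo_Jac}, the squaring transformation of Lemma \ref{symmetry-squared}, and the characterization of the stationary point of \eqref{eq_Fisher_Snedecor_ODE} in Lemma \ref{lem_non-col_Jac_frozen_stat}. Fix $a>0$ and $b=0$. By Lemma \ref{lem_HP_pseudo_Jac}, the ODE \eqref{eq_ODE_Hua-Pickrell2} has a stationary solution $z\in C_N^A$, namely the ordered zeros of $P_N(\cdot;-N-a,0)$, and this is its only stationary solution. The reflected vector $(-z_N,\ldots,-z_1)$ is again stationary for $b=0$, so uniqueness gives $z_{N+1-j}=-z_j$. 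Since the zeros are distinct (Lemma \ref{facts-peudojacobi-trivial}(5) with $-N-a<-N$), for even $N$ none of them is $0$. For odd $N$ exactly the middle one vanishes. In both cases the nonpositive zeros $z_1<\dots<z_{\lfloor N/2\rfloor}<0$ have pairwise distinct squares. Hence $\tilde x_j:=1+z_j^2$ gives a point in the interior of $C^{BC}_{N'}$ after reordering, where $N'=\lfloor N/2\rfloor$.

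Next apply Lemma \ref{symmetry-squared} to the constant symmetric solution $x_t\equiv z$. This shows that $(\tilde x_1,\ldots,\tilde x_{N'})$ is a stationary point of \eqref{squared-odes-even}, respectively \eqref{squared-odes-odd}. Divide the right-hand side by $4$, which corresponds to the time change $t\mapsto t/4$ and does not affect stationarity, and compare with \eqref{eq_Fisher_Snedecor_ODE} for $N'$ particles.
\begin{itemize}
\item For even $N$ with $N'=N/2$ the parameters are $p+q=-(a+\tfrac12)$ and $-p=(N+2a)/2$. Hence $p=-(N'+a)$ and $q=N'-\tfrac12$.
\item For odd $N$ with $N'=(N-1)/2$ the parameters are $p=-(N'+1+a)$ and $q=N'+\tfrac12$.
\end{itemize}
In both cases $q>N'-1$ holds, and $p<-q$ is equivalent to $a>-\tfrac12$, which is true since $a>0$. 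So Lemma \ref{lem_non-col_Jac_frozen_stat} applies. It says that the unique stationary point in the interior of $C^{BC}_{N'}$ has the property that $2\tilde x_j^{-1}-1$ are the zeros of $P_{N'}^{(q-N',\,-(p+q+1))}$. Here $q-N'=\mp\tfrac12$ for even and odd $N$ respectively, and $-(p+q+1)=a-\tfrac12$. Finally,
\[
2\tilde x_j^{-1}-1=\frac{2}{1+z_j^2}-1=\frac{1-z_j^2}{1+z_j^2}.
\]
This gives $P_{N/2}^{(-1/2,a-1/2)}$ in case (1) and $P_{(N-1)/2}^{(1/2,a-1/2)}$ in case (2).

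The main point needing care is the parameter matching and the bookkeeping of the hypotheses. One must check that the constant solution really satisfies the symmetry hypothesis of Lemma \ref{symmetry-squared}; this is where uniqueness of the stationary point is used. One must check that the transformed point lies in the interior of $C^{BC}_{N'}$, which needs distinct nonzero squares. And one must check that the inequalities $q>N'-1$, $p<-q$ hold, which is exactly where $a>0$ (indeed $a>-1/2$) enters.

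In the odd case, the statement of Lemma \ref{symmetry-squared}(2) should be read with $\tilde x_{(N+1)/2}=1$, i.e.\ $x_{(N+1)/2}=0$. That index is then dropped, and its contribution $\frac{1+x_j\cdot 0}{x_j-0}$ is precisely what produces the shifted constant $N+2a+1$. The ordering reversal under $x\mapsto 2x^{-1}-1$ is harmless, since only the set of zeros is asserted.
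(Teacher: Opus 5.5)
Your proof is correct and is essentially the paper's argument. The paper derives this corollary by rescaling time by $4$ in Lemma \ref{symmetry-squared}, matching with \eqref{eq_Fisher_Snedecor_ODE} for $p+q=-(a+\tfrac12)$ and $q=N'\mp\tfrac12$, and invoking Lemma \ref{lem_non-col_Jac_frozen_stat}; your extra bookkeeping (symmetry of the zeros via the uniqueness in Lemma \ref{lem_HP_pseudo_Jac}, distinct nonzero squares, and reading $\tilde x_{(N+1)/2}=1$ rather than $0$ in Lemma \ref{symmetry-squared}(2)) makes explicit details the paper leaves implicit.
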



\section{Stationary distributions of Hua-Pickrell diffusions}

In this section we extend Lemma \ref{lem_HP_pseudo_Jac} in the deterministic case to the Hua-Pickrell diffusions $(X_t)_{t\ge 0}$
on $C_N^A$ which satisfy the SDEs
\eqref{eq_SDE_Hua-Pickrell} or their renormalized versions in \eqref{eq_SDE_Hua-Pickrell-norm}. We show
that for $\beta\ge1$ and $a>-1/\beta$, the solutions $X_t$ tend in distribution  for $t\to\infty$ to corresponding $\beta$-Hua-Pickrell probability
measures on $C_N^A$. The associated random
ensembles are referred to in the literature as $\beta$-Hua-Pickrell ensembles as well as as $\beta$-Cauchy ensembles;
see \cite{AD, As1, As2, As3, AGS, F, FR}. There does not seem to exist a  predominant definition of
these measures and their parameters in the literature. We thus use the following definition which fits to the SDEs
\eqref{eq_SDE_Hua-Pickrell} and \eqref{eq_SDE_Hua-Pickrell-norm}:

\begin{definition}\label{def_HP_meas}
  Let $N\ge2, \beta\ge1$ and $a,b\in\mathbb R$ with $a>-1/\beta$.
  The Hua-Pickrell probability
measure $\rho_{N,a,b,\beta}$   is  defined as the probability measure on $C_N^A$  with the Lebesgue density
\begin{align}\label{eq_Hua_Pick_density}
 f_{N,a,b,\beta}(x):= C_{N,a,b,\beta}\prod_{j=1}^N\left(\left(1+x_j^2\right)^{\frac{\beta}{2}(1-N-a)-1}\exp(\beta b \arctan(x_j))\right)\prod_{1\leq j<k\leq N}\lvert x_j-x_k\rvert^{\beta}\,
\end{align}
with the normalization
\begin{align}\label{eq_Hua_Pick_density-norming}
  C_{N,a,b,\beta}^{-1}=& \frac{\pi^N 2^{-\beta N(a+(N-1)/2)}}{N!}\cdot\\
  &\cdot \prod_{j=0}^N \frac{\Gamma(\beta(j+a)+1)\cdot \Gamma((\beta(j+1)+1)}{\Gamma(\beta(j+\frac{a+ib}{2})+1)\cdot
\Gamma(\beta(j+\frac{a-ib}{2})+1)\cdot \Gamma(\beta+1)}.
  \notag
 \end{align}
This constant follows from Exercise 4.7(4) of p.179 and Eq.~(4.4) in \cite{F} and elementary calculus where one has to take the support $C_N^A$
of the measures $\rho_{N,a,b,\beta}$ into account.
\end{definition}

We notice that the Hua-Pickrell densities in \eqref{eq_Hua_Pick_density}  are closely related to the function $h$ in Lemma \ref{lem_pseudo_Jac_char} on the pseudo-Jacobi polynomials
(with the meaning of the constant $a$ there).

The following result is the main result of this section. For the case $\beta=2$ this result is shown in Proposition 4.4 of \cite{As2}.

\begin{proposition}\label{lem_Hua-Pickrell_invariant_meas}
  Let $\beta\ge1$ and $a,b\in\mathbb R$ with $a>-1/\beta$.
  Then the SDE \eqref{eq_SDE_Hua-Pickrell} has $\rho_{N,a,b,\beta}$  as its unique invariant measure.  
\end{proposition}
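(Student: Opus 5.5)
The plan has two parts: first show that $\rho_{N,a,b,\beta}$ is invariant, then show that it is the only invariant measure.

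\textbf{Invariance.} The generator of \eqref{eq_SDE_Hua-Pickrell} is
$$L=\sum_{j=1}^N\Bigl[(1+x_j^2)\partial_j^2+\beta\Bigl(b-ax_j+\sum_{l\ne j}\frac{x_jx_l+1}{x_j-x_l}\Bigr)\partial_j\Bigr].$$
We write $L$ in divergence form. Put $p(x)=1+x^2$ and
$$W(x):=\prod_j p(x_j)^{\frac{\beta}{2}(1-N-a)-1}e^{\beta b\arctan x_j}\prod_{j<k}|x_j-x_k|^\beta .$$
A direct computation should give $L f=W^{-1}\sum_j\partial_j\bigl(W p(x_j)\partial_j f\bigr)$, and this is the key identity. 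To verify it, compute the log-derivative:
$$\partial_j\ln(Wp(x_j))=\frac{2x_j}{1+x_j^2}\cdot\frac{\beta}{2}(1-N-a)+\frac{\beta b}{1+x_j^2}+\sum_{l\neq j}\frac{\beta}{x_j-x_l}.$$
Multiplying by $1+x_j^2$ and using $\frac{1+x_j^2}{x_j-x_l}=\frac{x_jx_l+1}{x_j-x_l}+x_j$, the terms $\beta(N-1)x_j$ cancel against $\beta(1-N)x_j$. What remains is $\beta(b-ax_j+\sum_{l\ne j}\frac{x_jx_l+1}{x_j-x_l})$, which is exactly the drift. Hence $L$ is symmetric in $L^2(W\,dx)$ on $C_N^A$. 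Then $\int Lf\,d\rho=0$ for smooth compactly supported $f$ on $\mathbb R^N$ restricted to $C_N^A$, that is, for symmetric test functions. This requires that the boundary terms vanish. On the walls $x_j=x_{j+1}$ the factor $|x_j-x_{j+1}|^\beta$ with $\beta\ge1$ kills the flux $Wp\,\partial_jf$; for symmetric $f$ the normal derivatives also cancel. At infinity nothing is needed because $f$ has compact support.

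\textbf{From $L^*\rho=0$ to invariance.} To pass from the infinitesimal statement to invariance under the semigroup, we use a standard argument (Echeverría's theorem, or the approach of Proposition 4.4 in \cite{As2} for $\beta=2$). Since the SDE has a unique strong non-colliding solution by Theorem \ref{lem_HP_exist_unique}, the martingale problem is well posed. Integrability of $\rho$ requires $a>-1/\beta$: the density behaves like $|x|^{-\beta(N-1+a)-2+\beta(N-1)}=|x|^{-\beta a-2}$ in one coordinate, and this is integrable exactly when $\beta a+1>0$. Thus Echeverría's criterion applies. Here we need a core of test functions, namely symmetric $C_c^\infty$ functions, which is a technical point: the coefficients are singular on $\partial C_N^A$.

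\textbf{Uniqueness.} This is the main obstacle. I would use the ellipticity of the diffusion on the interior of $C_N^A$. The coefficients are smooth there and the diffusion matrix is nondegenerate, so the process has smooth positive transition densities on the interior (Hörmander/strong Feller together with irreducibility). Because the process leaves the boundary immediately by Theorem \ref{lem_HP_exist_unique}, any invariant measure charges only the interior. By the Doob–Khasminskii theorem, a strong Feller, irreducible process has at most one invariant probability measure. Irreducibility follows from a support theorem: one can steer between any two interior points with controls, since the diffusion coefficients are nonvanishing. As an alternative route to uniqueness, I would show convergence of moments using the Pearson structure. The expectations of the elementary symmetric polynomials satisfy a triangular linear ODE system as in \eqref{elementary-ode}, and this pins down the finitely many moments whenever they exist.
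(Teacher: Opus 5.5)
Your proof is correct in substance, but it takes a different route from the paper.

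\textbf{Comparison with the paper.} The paper substitutes $Y_j=\arsinh X_j$. This makes the noise additive ($\sqrt2\,dB_j$). Using $\frac{\cosh x}{\sinh x-\sinh y}=\frac12\bigl(\coth\frac{x-y}2+\tanh\frac{x+y}2\bigr)$, it also turns the drift into a gradient $-\nabla V$, so the process becomes an overdamped Langevin dynamics. The Gibbs density $e^{-V}$ pushes forward under $\sinh$ to a multiple of $f_{N,a,b,\beta}$, and the paper then quotes both invariance and uniqueness from general theory (\cite{MV}).

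You instead stay in the original coordinates and verify directly that $L=W^{-1}\sum_j\partial_j\bigl(W(1+x_j^2)\partial_j\,\cdot\,\bigr)$. Your log-derivative computation is right. It is the same reversibility statement as the paper's, since the pushforward of $\Delta-\nabla V\cdot\nabla$ under $\sinh$ is exactly your divergence form. You get it without the It\^{o} correction and the hyperbolic identities.

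Your route also makes explicit several things the citation hides:
\begin{itemize}
\item the boundary flux vanishes because $W=0$ on the walls;
\item the single-coordinate tail exponent $-\beta a-2$ is what forces $a>-1/\beta$;
\item uniqueness follows because every invariant law lives on the open chamber (Theorem \ref{lem_HP_exist_unique}), where the diffusion is locally elliptic with smooth coefficients, hence strong Feller and irreducible.
\end{itemize}
The paper's route is shorter because it places the model in a standard class. However, that theory is stated for smooth potentials on all of $\mathbb R^N$, so the singular walls are handled no better there than in your Echeverr\'{\i}a--Doob argument.

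\textbf{Three small corrections.}
\begin{enumerate}
\item The boundary terms vanish for every $f\in C_c^\infty(\mathbb R^N)$ restricted to $C_N^A$. Symmetry of $f$ is needed for a different reason: it makes $Lf$ continuous up to the walls (through the terms $\frac{x_jx_l+1}{x_j-x_l}(\partial_jf-\partial_lf)$), which Echeverr\'{\i}a's theorem requires. Your phrase ``that is, for symmetric test functions'' conflates these two points.
\item Well-posedness of the martingale problem on symmetric test functions does not follow verbatim from pathwise uniqueness of the SDE. You must show that a solution of that martingale problem is a weak solution of the SDE. One way is to use localized elementary symmetric polynomials, since $x\mapsto(e_1(x),\ldots,e_N(x))$ is a homeomorphism of $C_N^A$ onto its image.
\item Your ``alternative route'' via moments is not a uniqueness proof. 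Because $\partial_j^2e_m=0$, the expectations $E\,e_m(X_t)$ do satisfy a triangular linear system. But this only fixes the numbers $\int e_m\,d\pi$, which do not determine $\pi$. They need not even exist: for $-1/\beta<a\le 0$ the one-coordinate tail $|x|^{-\beta a-2}$ has infinite mean. Keep the Doob--Khasminskii argument as the actual proof of uniqueness.
\end{enumerate}
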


\begin{proof}
Let $(X_t)_{t\geq0}$ be a solution of \eqref{eq_SDE_Hua-Pickrell}.
We define $(Y_t)_{t\geq0}$ by  putting $Y_{j,t}:=\arsinh(X_{j,t})$ for $j\in\{1,\dots,N\}$, $t\geq0$.
By It\^{o}'s formula and some elementary calculations, $(Y_t)_{t\geq0}$ satisfies
\begin{align*}
	dY_{j,t}
	=&\sqrt{2}\,dB_{j,t}+\beta\left[\left(1-N-\frac{1}{\beta}-a\right)\tanh(Y_{j,t})+b \frac{1}{\cosh(Y_{j,t})}\right.\\
	&\hphantom{\sqrt{2}\,dB_{j,t}+\beta}\left.+\cosh(Y_{j,t})\sum_{k\colon k\neq j}\frac{1}{\sinh(Y_{j,t})-\sinh(Y_{k,t})}\right]dt\,.
\end{align*}
Using 
\begin{align*}
	\frac{\cosh(x)}{\sinh(x)-\sinh(y)}
	=&\frac{1}{2}\left(\frac{\cosh(x)+\cosh(y)}{\sinh(x)-\sinh(y)}+\frac{\cosh(x)-\cosh(y)}{\sinh(x)-\sinh(y)}\right)\\
	=&\frac{1}{2}\left(\coth\left(\frac{x-y}{2}\right)+\tanh\left(\frac{x+y}{2}\right)\right)\,,
\end{align*}
we  rewrite our SDE as
\begin{equation}\label{eq_SDE_Hua_Pickrell_hyp}
\begin{split}
	dY_{j,t}
	=&\sqrt{2}\,dB_{j,t}+\beta\left[\left(1-N-\frac{1}{\beta}-a\right)\tanh(Y_{j,t})+\frac{b}{\cosh(Y_{j,t})}\right.\\
	&\hphantom{\sqrt{2}\,dB_{j,t}+\beta}\left.+\frac{1}{2}\sum_{k\colon k\neq j}\left(\coth\left(\frac{Y_{j,t}-Y_{k,t}}{2}\right)+\tanh\left(\frac{Y_{j,t}+Y_{k,t}}{2}\right)\right)\right]dt\,.
\end{split}
\end{equation}
The corresponding generator is thus given by
\begin{align*}
	\mathcal{L}
	:=\Delta+(-\nabla V(x))\nabla
\end{align*}
with
\begin{equation}\label{eq_hua_pickrell_potential_def}
\begin{split}
	V(x)
	:=&-\beta\left[\left(1-N-\frac{1}{\beta}-a\right)\sum_{j=1}^{N}\log(\cosh(x_j))+b\sum_{j=1}^{N}\arctan(\sinh(x_j))\right.\\
	&\left.\hphantom{-\beta}+\sum_{1\leq j<k\leq N}\log\left(\sinh\left(\frac{x_k-x_j}{2}\right)\cosh\left(\frac{x_j+x_k}{2}\right)\right)\right]\,.
\end{split}
\end{equation}
The candidate for the invariant measure for such an overdamped Langevin dynamic is  the Gibbs-measure $\tilde{\rho}$
with Lebesgue density $\frac{1}{\tilde{Z}}e^{-V(x)}$ with normalization $\tilde{Z}:=\int_{C_N}e^{-V(x)}\,dx$.
In order to check $\tilde{Z}<\infty$, we return to the original coordinates.
We thus determine the image measure of the measure with density
\begin{equation}\label{eq_density_fisher_snedecor_hyp}
\begin{split}
	e^{-V(x)}
	=&\prod_{j=1}^N\left(\left(\cosh(x_j)\right)^{\beta(1-N-1/\beta-a)}\exp\left(\beta b\arctan(\sinh(x_j))\right)\right)\\
	&\times\prod_{1\leq j<k\leq N}\left(\sinh\left(\frac{x_k-x_j}{2}\right)\cosh\left(\frac{x_j+x_k}{2}\right)\right)^{\beta}
\end{split}
\end{equation}
under the map $C_N^{A}\to C_N^{A}$, $x_j\mapsto\sinh(x_j)$, $j\in\{1,\dots,N\}$.
Using $\cosh(\arsinh(x))=\sqrt{1+x^2}$ and
$$
	\sinh\left(\frac{\arsinh(x_i)-\arsinh(x_j)}{2}\right)\cosh\left(\frac{\arsinh(x_i)+\arsinh(x_j)}{2}\right)=\frac{x_i-x_j}{2}\,,
$$
we get a density which is proportional to \eqref{eq_Hua_Pick_density}.
Under our assumptions in the Proposition, this measure is finite  by Definition \ref{def_HP_meas}.
Hence, the Gibbs measure $\tilde{\rho}$ is the unique invariant probability measure of the dynamic \eqref{eq_SDE_Hua_Pickrell_hyp}, see e.g.\ the discussion at the beginning of Section 2 of the survey \cite{MV}.
The claim follows after applying the coordinate change  $x_j\mapsto\sinh(x_j)$, $j\in\{1,\dots,N\}$, as above.
\end{proof}


\section{A central limit theorem in the freezing case $\beta\to\infty$}

In this section we derive a central limit theorem (CLT) for $\beta\to\infty$
for the Hua-Pickrell measures $\rho_{N,a,b,\beta} $ on $C_N^A$ for fixed $N, a>0$, and $b\in\mathbb R$.
For this we first observe that by the definition of the functions $h_{N,a,b}$ in Lemma \ref{lem_pseudo_Jac_char}, the 
Hua-Pickrell measures can be written as
\begin{equation}\label{rho-by-h}
  d\rho_{N,a,b,\beta}(x)=c_{N,a,b,\beta} (h_{N,-N-a,b}(x))^\beta \cdot \prod_{j=1}^N (1+x_j^2)^{-1} dx\end{equation}
  where by Lemma \ref{lem_pseudo_Jac_char}, $h_{N,-N-n,b}$ has a unique maximum on $C_N^A$ in the vector $z=(z_1,\ldots,z_N)$
  consisting of the $N$ different zeroes of $P_N(x; -(N+a),b)$. 
  This implies readily that for $\beta\to\infty$,
  \begin{equation}\label{LLN-Hua-pickrell}
    \rho_{N,a,b,\beta}\to \delta_z \quad\quad\text{weakly.}
  \end{equation}
  Motivated by corresponding CLTs for the classical $\beta$-ensembles (Hermite, Laguerre, Jacobi) in \cite{DE, GK, V1, HV}, we have
the following CLT associated with \eqref{LLN-Hua-pickrell}:

\begin{theorem}\label{clt-hua-pickrell-alg}
 	Let $a>0$, $b\in\mathbb R$, $\beta\ge1$, and $N\ge 2$ an integer.
 	Let $ X_{N,a,b,\beta}$ be random variables with the distributions $\rho_{N,a,b,\beta}$. 
 	Then the distributions of $\sqrt{\beta}( X_{N,a,b,\beta}-z)$
 	converge weakly for $\beta\rightarrow\infty$ to the centered $N-$dimensional normal distribution
        $N(0,\Sigma)$ with the positive definite  covariance matrix ${\Sigma}$ whose inverse  
 	$S:=({s}_{j,k})_{j,k=1,...,N}:={\Sigma}^{-1}$ satisfies
 	\begin{align*}
 	 s_{j,k}=
 	\begin{cases}\sum_{ l: \> l\ne j}
 	  \frac{1}{(z_j-z_l)^2}+\frac{(N+a-1)(1-z_j^2)+2bz_j}{(1+z_j^2)^2} &\textit{ for } j=k\\
 	-\frac{1}{(z_j-z_k)^2}&\textit{ for } j\neq k.
 	\end{cases}
 	\end{align*}
\end{theorem}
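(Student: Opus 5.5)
This is a Laplace-method argument around the unique maximizer $z$ of $h:=h_{N,-N-a,b}$ from Lemma \ref{lem_pseudo_Jac_char}. By \eqref{rho-by-h}, the density of $\rho_{N,a,b,\beta}$ is $c_{N,a,b,\beta}\exp(\beta\log h(x))\,g(x)$ with $g(x)=\prod_j(1+x_j^2)^{-1}$. Substituting $x=z+y/\sqrt\beta$, the density of $\sqrt\beta(X_{N,a,b,\beta}-z)$ is proportional to
$$\exp\bigl(\beta[\log h(z+y/\sqrt\beta)-\log h(z)]\bigr)\,g(z+y/\sqrt\beta)\,\mathbf 1_{\{z+y/\sqrt\beta\in C_N^A\}}.$$
Since $z$ lies in the interior and $\nabla\log h(z)=0$ (condition (2) of Lemma \ref{lem_pseudo_Jac_char} with parameter $-N-a$), Taylor expansion gives pointwise convergence of this expression to $g(z)\exp(-\tfrac12 y^TSy)$ with $S=-\mathrm{Hess}\log h(z)$. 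The indicator tends to $1$ for every $y$.

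Next I compute $S$. For $j\neq k$, $\partial_j\partial_k\sum_{l<m}\log(x_m-x_l)=1/(x_j-x_k)^2$, so $s_{j,k}=-1/(z_j-z_k)^2$. On the diagonal, the Vandermonde part gives $-\sum_{l\ne j}(z_j-z_l)^{-2}$. The one-variable part is $\frac{1-N-a}{2}\log(1+x^2)+b\arctan x$, whose second derivative is
$$(1-N-a)\frac{1-x^2}{(1+x^2)^2}-\frac{2bx}{(1+x^2)^2}.$$
Negating both parts yields exactly the stated $s_{j,j}$. For positive definiteness, $S$ is the negative Hessian at a maximum, so it is positive semidefinite. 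To get strictness I would write $S$ as a graph-Laplacian-type matrix (which is PSD) plus a diagonal part. Because that diagonal part need not be positive everywhere, I would instead argue directly: $-\log h$ is strictly convex near $z$. For this I would try the $\arsinh$ coordinates of Proposition \ref{lem_Hua-Pickrell_invariant_meas}, in which the potential $V$ with $\beta$-scaling is a sum of terms like $-\log\sinh((y_k-y_j)/2)$ (convex), $-\log\cosh((y_j+y_k)/2)$, and $(N+a-1)\log\cosh y_j$ terms. Convexity of the full sum may need the combination with $a>0$; one could alternatively use the elementary-symmetric ODE linearization of Lemma \ref{lem_HP_pseudo_Jac}, whose eigenvalues $-c(m)<0$ show the gradient flow has a nondegenerate attracting equilibrium, so the Hessian is nonsingular. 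I expect this step to be the main obstacle. The fallback is to note that the ODE \eqref{eq_ODE_Hua-Pickrell2} is, up to the factor $(1+x_j^2)$, the gradient flow of $\log h$, and that the linearization at $z$, being triangular in $e_m$-coordinates with eigenvalues $-2c(m)$, is invertible.

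Finally I justify dominated convergence, uniformly in large $\beta$. Near $z$, in a ball $\|y\|\le \delta\sqrt\beta$, strict concavity gives $\beta[\log h(z+y/\sqrt\beta)-\log h(z)]\le -c\|y\|^2$. Outside, uniqueness of the maximum and $h\to0$ at infinity and on $\partial C_N^A$ (shown in the proof of Lemma \ref{lem_pseudo_Jac_char}) give $h(x)\le (1-\eta)h(z)$ off the ball. This region then contributes at most $\beta^{N/2}(1-\eta)^{\beta-\beta_0}\int h^{\beta_0}g\,dx\to0$ for fixed $\beta_0\ge1$, using integrability at $\beta_0=1$ from Definition \ref{def_HP_meas}. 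The normalizations therefore converge too, and Scheffé's lemma gives weak convergence to $N(0,S^{-1})$.
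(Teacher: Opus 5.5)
Your argument is correct, and it differs from the paper in how it gets positive definiteness. The Laplace part (pointwise limit, the bound $-c\|y\|^2$ on the $\delta\sqrt\beta$-ball, the tail estimate via integrability at $\beta_0=1$, Scheff\'e) is a hand proof of Theorem \ref{Laplace2}, which the paper simply cites. Your Hessian agrees with \eqref{clt-proof4}--\eqref{clt-proof5}.

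The paper gets positive definiteness from the spectrum of $\tilde S=DSD$, $D=\mathrm{diag}(\sqrt{1+z_j^2})$. It formally transfers Proposition 3.1 of \cite{HV} to the complex Jacobi parameters $-a-N\pm ib$ with $z^{Jac}=iz$, which also yields the eigenvectors via discrete orthogonal polynomials.

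Your ODE fallback works and should be your main line. The $\arsinh$-convexity idea is not established and not needed. Spelled out:
\begin{itemize}
\item Since $\frac{x_jx_k+1}{x_j-x_k}=\frac{1+x_j^2}{x_j-x_k}-x_j$, the right side of \eqref{eq_ODE_Hua-Pickrell2} is $F(x)=2\,\mathrm{diag}(1+x_j^2)\nabla\ln h(x)$.
\item Because $\nabla\ln h(z)=0$, this gives $DF(z)=-2D^2S$.
\item Let $E=(e_1,\dots,e_N)$ and let $A$ be the lower triangular matrix from \eqref{elementary-ode}. Differentiating the pointwise identity $DE(x)F(x)=AE(x)+c_0$ at $z$ gives $DE(z)\,DF(z)=A\,DE(z)$, and $DE(z)$ is invertible (Vandermonde).
\item Hence $D^2S$, and therefore the symmetric matrix $\tilde S=D^{-1}(D^2S)D$, has eigenvalues $c(m)/2=m(a+(m-1)/2)>0$. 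So $S$ is positive definite, even without using semidefiniteness.
\end{itemize}

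Two small corrections. The diagonal of $A$ is $-c(m)$, not $-2c(m)$, because the factor $2$ is already inside $c(m)=m(2a+m-1)$. Also, an attracting equilibrium alone would not give nondegeneracy; what does the work is this conjugation of the linearizations at the fixed point.

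Your route is self-contained, reusing only the computation of Lemma \ref{lem_HP_pseudo_Jac}. It avoids analytically continuing a result proved for real parameters, and it recovers exactly the eigenvalues \eqref{eigenvalues-hua} of Theorem \ref{clt-hua-pickrell-trig}. It does not by itself give the eigenvectors \eqref{eigenvectors-hua}.
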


In order to obtain further information about the  covariance matrices ${\Sigma}$ and their inverses,
we rewrite the CLT \ref{clt-hua-pickrell-alg} in trigonometric coordinates. For this  consider the diffeomorphism
$$T: \mathbb R\to \mathbb R, \quad (x_1,\ldots,x_N)\mapsto (\sinh(x_1),\ldots,\sinh(x_N)).$$
These trigonometric coordinates
are motivated by
corresponding results for Jacobi ensembles in \cite{HV} and by the  preceding section.

\begin{theorem}\label{clt-hua-pickrell-trig}
 	Let $a>0$, $b\in\mathbb R$, $\beta\ge1$, and $N\ge 2$ an integer.
 	Let $ X_{N,a,b,\beta}$ be random variables with the distributions $\rho_{N,a,b,\beta}$. 
 	Then the distributions of $\sqrt{\beta}( T^{-1}(X_{N,a,b,\beta})-T^{-1}(z))$
 	converge weakly for $\beta\rightarrow\infty$ to the centered $N-$dimensional normal distribution
        $N(0,\tilde\Sigma)$ with the  positive definite  covariance matrix ${\tilde\Sigma}$ whose inverse  
 	$\tilde S:=({\tilde s}_{j,k,j})_{j,k=1,...,N}:={\tilde \Sigma}^{-1}$ satisfies
 	\begin{align}\label{covmatrix-hua-trig}
 	 \tilde s_{j,k}&=   s_{j,k} \cdot \sqrt{z_j^2+1}\cdot \sqrt{z_k^2+1}\notag\\
 	&=\begin{cases}\sum_{ l: \> l\ne j}
 	  \frac{z_j^2+1}{(z_j-z_l)^2}+\frac{(N+a-1)(1-z_j^2)+2bz_j}{(1+z_j^2)} &\textit{ for } j=k\\
 	-\frac{ \sqrt{z_j^2+1}\cdot \sqrt{z_k^2+1}}{(z_j-z_k)^2}&\textit{ for } j\neq k.
 	\end{cases}
 	\end{align}
        The matrix $\tilde S$ has the simple  eigenvalues 
 \begin{equation}\label{eigenvalues-hua}	\lambda_k:=k(a+(k-1)/2)>0 \quad\quad (k=1,\ldots,N).	\end{equation}
 	Each  $\lambda_k$ has an eigenvector of the form
        \begin{equation}\label{eigenvectors-hua}	v_k:=\left(q_{k-1}(z_1)\sqrt{1+z_1^2},\ldots,q_{k-1}(z_N)\sqrt{1+z_N^2}\right)^T
          \quad\quad (k=1,\ldots,N)	\end{equation}
 	for polynomials $q_{k-1}$ of order $k-1$ which are orthonormal w.r.t. the discrete measure
 	\begin{align}\label{orth-measure-hua}
 	\mu_{N,\alpha,\beta}:=(1+z_1^2)\delta_{z_1}+\ldots+(1+z_N^2)\delta_{z_N}.
 	\end{align}
\end{theorem}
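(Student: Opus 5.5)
The plan has three parts: a change of variables for the CLT, an explicit symmetric matrix for $\tilde S$, and an eigenvector computation using the differential equation of the pseudo-Jacobi polynomials.

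\textbf{Step 1: the CLT in trigonometric coordinates.} We deduce the convergence statement directly from Theorem \ref{clt-hua-pickrell-alg} by the delta method. The map $T^{-1}=(\arsinh,\ldots,\arsinh)$ is smooth. Its Jacobian at $z$ is the diagonal matrix
$$D:=\operatorname{diag}\bigl((1+z_j^2)^{-1/2}\bigr)_{j=1,\ldots,N}.$$
Hence $\sqrt\beta\,(T^{-1}(X)-T^{-1}(z))$ converges to $N(0,D\Sigma D)$, so $\tilde\Sigma=D\Sigma D$ and
$$\tilde S=D^{-1}SD^{-1}.$$
This gives $\tilde s_{j,k}=s_{j,k}\sqrt{1+z_j^2}\sqrt{1+z_k^2}$ and the explicit formula \eqref{covmatrix-hua-trig}. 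Positive definiteness is inherited from $S$.

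\textbf{Step 2: reduction to a matrix $A$.} Conjugate by $W:=\operatorname{diag}(\sqrt{1+z_j^2})$ and set $A:=W^{-1}\tilde S W=D^{-1}SW^{-1}\cdot W\ldots$; concretely $A=S\,W^2$, i.e.
$$(Au)_j=(1+z_j^2)\sum_k s_{j,k}u_k.$$
Then $v_k=Wu$ with $u=(q_{k-1}(z_j))_j$ is an eigenvector of $\tilde S$ with eigenvalue $\lambda_k$ if and only if $Au=\lambda_k u$. So we must show that, for each polynomial $q$ of degree $m=k-1\le N-1$,
$$(1+z_j^2)\Bigl[\sum_{l\ne j}\frac{q(z_j)-q(z_l)}{(z_j-z_l)^2}+\frac{(N+a-1)(1-z_j^2)+2bz_j}{(1+z_j^2)^2}\,q(z_j)\Bigr]=\lambda_{m+1}q(z_j)+(\text{lower-degree polynomial in } z_j).$$
Then $A$ is triangular with respect to the polynomial basis $1,x,\ldots,x^{N-1}$, and the diagonal entries are $\lambda_1,\ldots,\lambda_N$. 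These are distinct and positive since $a>0$, so $A$ has simple eigenvalues $\lambda_k$ with polynomial eigenvectors $q_{k-1}$ of exact degree $k-1$.

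\textbf{Step 3: the triangularity computation.} This is the main obstacle. The strategy follows the Jacobi case in \cite{HV, V1}: express the sums over $l$ through derivatives of $f=P_N(\cdot;-(N+a),b)$ at its zeros, using the stationarity relations from Lemma \ref{lem_pseudo_Jac_char}(2) and the ODE \eqref{eq_pf_HP_ODE_station}.

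First expand
$$\frac{q(z_j)-q(z_l)}{(z_j-z_l)^2}=\frac{q'(z_j)}{z_j-z_l}-\frac{q''(z_j)}{2}+O\bigl((z_l-z_j)\bigr)\ldots$$
This expansion is not exact, so instead we use the exact identities
$$\sum_{l\ne j}\frac{1}{z_j-z_l}=\frac{f''(z_j)}{2f'(z_j)},\qquad \sum_{l\ne j}\frac{1}{(z_j-z_l)^2}=\frac{f''(z_j)^2}{4f'(z_j)^2}-\frac{f'''(z_j)}{3f'(z_j)}.$$
The polynomial part of $\sum_{l\ne j}\frac{q(z_j)-q(z_l)}{(z_j-z_l)^2}$ is handled via
$$\sum_l \frac{q(z_l)}{(z_j-z_l)^2},$$
which we write as a residue or partial fraction of $q(x)f'(x)/f(x)$ differentiated, so that it reduces to the exact identities above.

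Differentiating \eqref{eq_pf_HP_ODE_station} (with the parameter $-(N+a)$) once and twice expresses $f''(z_j)$ and $f'''(z_j)$ in terms of $f'(z_j)$, $z_j$ and $b$. Substituting these, the leading coefficient of the resulting polynomial in $z_j$ should come out as $m(a+m/2)+\ldots=\lambda_{m+1}$.

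A cleaner route, which I would try first, avoids the explicit sums. Note that $A$ is the Hessian of $-\ln h_{N,-N-a,b}$ at $z$ multiplied by $(1+z_j^2)$. Perturb $x_j(\epsilon)=z_j+\epsilon(1+z_j^2)q(z_j)$ and linearize the stationary ODE \eqref{eq_ODE_Hua-Pickrell2}. Lemma \ref{inverse-heat-hua-pick} turns this linearization into the operator
$$H\mapsto -(1+x^2)H''+(2(a+N-1)x-2b)H'-N(2a+N-1)H$$
acting on perturbations $f\cdot r$ of $f$ by polynomials. The eigenvalues are then read off from the triangular action on $e_m$, as in \eqref{elementary-ode}: the coefficients there are $c(m)=m(2a+m-1)=2\lambda_m$, which matches $\lambda_k$ up to the time factor $2$.

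\textbf{Step 4: orthonormality.} Since $A=SW^2$ with $S$ symmetric, $A$ is self-adjoint with respect to the inner product $\langle u,w\rangle=\sum_j(1+z_j^2)u_jw_j$. Equivalently, $\tilde S$ is symmetric. Therefore the eigenvectors $q_{k-1}$ belonging to distinct eigenvalues are orthogonal in $L^2(\mu_{N,\alpha,\beta})$, and they can be normalized to be orthonormal.
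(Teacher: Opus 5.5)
\textbf{There is a genuine gap.} Your overall strategy is delta method, then triangularity of a matrix similar to $\tilde S$ on the flag of polynomial vectors, then self-adjointness. That is a legitimate alternative to the paper, which substitutes $\alpha=-a-N+ib$, $\beta=-a-N-ib$, $z_j^{Jac}=iz_j$ into Proposition 3.1 of \cite{HV}, so that $-S^{Jac}/4$ becomes $\tilde S$. However, your decisive Step 3 is never carried out, and the identity you set it up to prove is false.

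In Step 2 you correctly get $A=W^{-1}\tilde S W=SW^2$. But the coordinate formula you then use, $(Au)_j=(1+z_j^2)\sum_k s_{j,k}u_k$, is the formula for $W^2S$. The correct one is $(Au)_j=\sum_k s_{j,k}(1+z_k^2)u_k$, which is also the operator that is self-adjoint for the weight $(1+z_j^2)$ in your Step 4. The matrix $W^2S$ has eigenvectors $\bigl((1+z_j^2)q_{k-1}(z_j)\bigr)_j$ and does not preserve the polynomial flag. For $q\equiv1$ the off-diagonal terms cancel, and $(W^2S\mathbf 1)_j=\frac{(N+a-1)(1-z_j^2)+2bz_j}{1+z_j^2}$. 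For $N=2$, $a=b=1$ one has $z_{1,2}=1\mp\sqrt{2/3}$, and this vector is $(1+\sqrt{3/2},\,1-\sqrt{3/2})$, not $\lambda_1\mathbf 1$. So no manipulation of $f''$ and $f'''$ can produce your displayed target. The phrase ``should come out as $m(a+m/2)+\ldots$'' is exactly where the proof is missing.

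\textbf{How to close it.} With the correct operator the computation is short. Put $p(x):=(1+x^2)q(x)$ and $R_p(x,y):=\frac{p(y)-p(x)-p'(x)(y-x)}{(y-x)^2}$, which is a polynomial with $R_p(x,x)=p''(x)/2$. Lemma \ref{lem_pseudo_Jac_char}(2) gives $\sum_{l\ne j}\frac{1}{z_j-z_l}=\frac{(N+a-1)z_j-b}{1+z_j^2}$. Hence $\frac{(N+a-1)(1-z_j^2)+2bz_j}{1+z_j^2}=(N+a-1)-2z_j\sum_{l\ne j}\frac{1}{z_j-z_l}$. Inserting this together with $2z_jq(z_j)=p'(z_j)-(1+z_j^2)q'(z_j)$ yields
\[ \sum_k s_{j,k}(1+z_k^2)q(z_k)=(N+a-1)q(z_j)+\bigl((N+a-1)z_j-b\bigr)q'(z_j)+\tfrac12 p''(z_j)-\sum_{l=1}^N R_p(z_j,z_l). \]
The right-hand side is a polynomial in $z_j$ of degree at most $m$, the same polynomial for all $j$. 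If $q=cx^m+\ldots$, its leading coefficient is $c\bigl[(N+a-1)(m+1)+\tfrac{(m+1)(m+2)}{2}-N(m+1)\bigr]=c\,\lambda_{m+1}$. With this identity your Steps 2 and 4 go through.

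\textbf{Your linearization route.} It is sound for the spectrum. The ODE is $\dot x=2\operatorname{diag}(1+x_j^2)\nabla\ln h_{N,-N-a,b}(x)$, so its Jacobian at $z$ is $-2W^2S$. Via the differential of $x\mapsto(e_1^N(x),\ldots,e_N^N(x))$ this is conjugate to the triangular matrix of \eqref{elementary-ode}, whose diagonal is $-c(m)=-2\lambda_m$. However, it yields eigenvectors of $W^2S$ only in the dual form $\bigl(r(z_j)/f'(z_j)\bigr)_j$, with $\deg r=N-k$ and $f=P_N(\cdot;-(N+a),b)$. So it does not give \eqref{eigenvectors-hua} without a further duality argument.

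\textbf{Circularity in Step 1.} ``Positive definiteness is inherited from $S$'' is circular. In the paper, the definiteness of $S$ needed for Theorem \ref{Laplace2} is itself deduced from $\lambda_k>0$ for $\tilde S$. So the spectral computation must come first, and only then can you invoke the CLT.
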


We point out that the equivalence of the CLTs themselves
in Theorems \ref{clt-hua-pickrell-alg} and \ref{clt-hua-pickrell-trig} with the relation between
the covariance matrices in \eqref{covmatrix-hua-trig} follow from the 
Delta-method for  CLTs of transformed random variables; see Section  3.1 of \cite{vV}.

We prove  Theorems \ref{clt-hua-pickrell-alg} and \ref{clt-hua-pickrell-trig} together. The proof will consist of two main steps:

We first show that in the setting of Theorem  \ref{clt-hua-pickrell-alg}, the Lebesgue
densities of $\sqrt{\beta}( X_{N,a,b,\beta}-z)$ tend locally uniformly on $\mathbb R^N$ to the density of $N(0,\Sigma)$ for $\beta\to\infty$.
We shall do this in algebraic coordinates, as the computation is easier in this case.
The computation of the limit of these densities decomposes into 2 parts, namely the non-constant part,
which involves the variable $x\in\mathbb R^N$, and the normalizing constants.
We shall restrict our attention to the first non-constant part below. The convergence of the constants then can be worked out by a
nasty computation using Stirling's formula similar to the $\beta$-ensembles in \cite{HV,V1}.  However, it is possible to use the Laplace
method (see \cite{B, W}) in order to obtain the correct convergence of the constants automatically.
This approach also will automatically
lead to weak
convergence of the involved probability measures. For convenience of the reader, we explain this approach  below in Theorems \ref{Laplace1}
and \ref{Laplace2}.
However, we point out that this approach via the Laplace method requires that the covariance matrix (which is a Hessian)
is positive definite.  From the very construction, positive semidefiniteness is clear, but positive definiteness is not obvious.
In our setting the positive definiteness follows from the second main part of the proof in which we compute
the eigenvalues and eigenvectors of $\tilde S$. This result then in particular implies that  $\tilde S$
and thus also $S$ is in fact positive definite. We here still mention that the  eigenvalues and eigenvectors of $\tilde S$
will be computed by using the corresponding result for Jacobi ensembles in Proposition 3.1 of \cite{HV}.
This method will also allow to compute the covariance matrices $\tilde \Sigma$ and $\Sigma$ explicitely by the methods in \cite{AHV}.

We next turn to the details of the proof.
We begin with the Laplace method in asymptotic analysis. In some standard form it is as follows; see
 Ch. IX of \cite{W} or Theorem 41 and Lemma 38 in \cite{B}:

\begin{theorem}\label{Laplace1}
Let  $\phi\in C^2(D)$ and
$\psi \in C_0(D)$ be functions  with $\phi,\psi \ge 0$ on some domain $D\subset \mathbb R^N$   with the following properties:
\begin{enumerate}
\item[\rm{(1)}] $\phi$ has  a unique global maximum at $z \in D$
such that for every neighborhood $V$ of $z$, 
$$\sup\{\phi (x) : \> x \in D\setminus V \} < \phi(z).$$
\item[\rm{(2)}]  $\phi$ has a negative definite Hessian matrix $H_\phi (z)$ in $z$.
\item[\rm{(3)}] The Lebesgue integrals $\int_D  \psi(x)\phi(x)^k\> dx$ ($k\in\mathbb N$) exist and converge  for $k\to\infty$.
\item[\rm{(4)}]  $\psi(z)>0$.
\end{enumerate}
Then,
$$\lim_{k\to\infty} \int_D  \psi(x)\phi(x)^k\> dx \cdot \Bigl(\frac{2\pi}{k}\Bigr)^{-N/2}\cdot
\frac{\det (- H_{\ln \phi} (z))^{1/2}    }{\psi(z)\phi(z)^k}=1.$$
\end{theorem}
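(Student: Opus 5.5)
The statement is the classical Laplace method, so the plan is to localize near $z$ and compare with a Gaussian integral. Write $\phi=e^{g}$ near $z$ with $g:=\ln\phi$. This is legitimate: $\phi(z)>0$ is forced by (3) and (4), because otherwise (1) would make $\phi\equiv0$. Then $g\in C^2$ in a neighborhood of $z$, $\nabla g(z)=0$, and $H_g(z)=H_\phi(z)/\phi(z)$ is negative definite by (2). Set $A:=-H_{\ln\phi}(z)$. Since the claimed limit is unchanged if $\phi$ is replaced by $\phi/\phi(z)$ (both sides scale by $\phi(z)^k$), I may assume $\phi(z)=1$.

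\textbf{Step 1 (local part).} Fix $\varepsilon>0$. By continuity of $H_g$ and $\psi$ at $z$, choose a ball $V=B_\delta(z)\subset D$ on which
$$-\tfrac12 (x-z)^T(A+\varepsilon I)(x-z)\le g(x)\le -\tfrac12 (x-z)^T(A-\varepsilon I)(x-z)$$
and $|\psi(x)-\psi(z)|\le\varepsilon$. This follows from Taylor's formula with the mean-value form of the remainder, taking $\varepsilon$ smaller than the smallest eigenvalue of $A$. Substituting $x=z+y/\sqrt k$ gives
$$\int_V\psi\phi^k\,dx=k^{-N/2}\int_{|y|<\delta\sqrt k}\psi(z+y/\sqrt k)\,e^{k g(z+y/\sqrt k)}\,dy .$$
The integrand is squeezed between $(\psi(z)\mp\varepsilon)\exp\bigl(-\tfrac12 y^T(A\pm\varepsilon I)y\bigr)$, and these bounds integrate over $\mathbb R^N$ to $(\psi(z)\mp\varepsilon)(2\pi)^{N/2}\det(A\pm\varepsilon I)^{-1/2}$. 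The truncation $|y|<\delta\sqrt k$ only costs $o(1)$ in the lower bound. Letting $k\to\infty$ and then $\varepsilon\to0$ yields
$$k^{N/2}\int_V\psi\phi^k\,dx\to\psi(z)(2\pi)^{N/2}\det(A)^{-1/2}.$$

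\textbf{Step 2 (tail).} This is the step where hypothesis (3) is really needed, and I expect it to be the main obstacle. By (1) there is $q<1$ with $\phi\le q$ on $D\setminus V$. Then, for $k\ge k_0$,
$$\int_{D\setminus V}\psi\phi^k\,dx\le q^{k-k_0}\int_{D\setminus V}\psi\phi^{k_0}\,dx .$$
The integral on the right is finite by (3); it is here that I use $\phi,\psi\ge0$ to pass from existence of $\int_D\psi\phi^{k_0}$ to finiteness of the integral over the subset. Hence the tail is $O(q^k)$, which is $o(k^{-N/2})$.

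Combining the two steps, and recalling the normalization $\phi(z)=1$, gives exactly the asserted limit $1$. The only delicate points are the legitimacy of taking the logarithm near $z$, which is handled by positivity of $\phi$ near $z$ together with continuity, and the uniform exponential domination of the tail, which (1) and (3) provide.
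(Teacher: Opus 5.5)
Your proof is correct. It is the standard Laplace-method argument: a Gaussian comparison on a small ball via a second-order Taylor bound for $\ln\phi$, plus an exponential tail bound $\phi\le q<\phi(z)$ off that ball, combined with finiteness of a single integral from (3). The paper does not prove this theorem itself but cites exactly this classical result from the literature.

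Two cosmetic fixes are needed. First, $\phi(z)>0$ follows from (1) or (2), not from (3) and (4): a zero of the nonnegative $\phi$ would be a minimum, which is incompatible with a negative definite Hessian. Second, your ball $V$ depends on $\varepsilon$, so you should combine Steps 1 and 2 into $\limsup$ and $\liminf$ bounds for $k^{N/2}\int_D\psi\phi^k\,dx$ before sending $\varepsilon\to0$.
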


Theorem \ref{Laplace1} implies the following general CLT; see Theorem 3.2 and Corollary 3.3 in \cite{He}:

\begin{theorem}\label{Laplace2}
  Let $D\subset\mathbb{R}^{N}$ be a domain, $\phi\in C^{3}(D)$, and $\psi\in C^0(D)$
  with values in $]0,\infty[$ and with the following properties:
 \begin{enumerate}
 \item[\rm{(1)}] $\phi$ has a global maximum at $z=(z_1,...,z_N)\in D$ such that for every neighborhood $V$ of $z$,
			$$\sup\{\phi(x):x\in D\setminus V\}<\phi(z);$$
\item[\rm{(2)}] The Hessian matrix $H_\phi(z)$ of  $\phi $ in $z$ is negative definite;
\item[\rm{(3)}] $c_\beta^{-1}:=\int_{D}\psi(x)\phi(x)^{\beta}dx<\infty$ for all $\beta>1$;
\item[\rm{(4)}] $\psi(z)> 0$.
  \end{enumerate}
Consider random variables $X_\beta$ on $D$ with the Lebesgue densities 	$f_\beta(x):=c_\beta\psi(x)\phi(x)^{\beta}$ for $\beta\ge1$. Then
the random variables
$\sqrt{\beta}(X_\beta-z)$ converge in distribution to $N(0,\Sigma)$ for $\beta\to\infty$ where the covariance matrix $\Sigma$ satisfies
$$\Sigma^{-1}=-H_{\ln\phi}(z)=-\frac{1}{\phi(z)}H_\phi(z).$$
Moreover,
the expectations $E(X_\beta)$ exist for $\beta\ge1$, and $\sqrt{\beta}(X_\beta-E(X_\beta))$ also converges in distribution to $N(0,\Sigma)$. 
	\end{theorem}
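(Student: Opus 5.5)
The plan is to reduce everything to a local Gaussian approximation of the densities and then upgrade it to convergence in distribution by Scheffé's lemma. Since $z$ is an interior global maximum of $\phi>0$, we have $\nabla\phi(z)=0$. Hence $H_{\ln\phi}(z)=\frac{1}{\phi(z)}H_\phi(z)-\frac{1}{\phi(z)^2}\nabla\phi(z)\nabla\phi(z)^T=\frac{1}{\phi(z)}H_\phi(z)$, which gives the stated identity for $\Sigma^{-1}$. By (2), $\Sigma^{-1}$ is positive definite. The random variable $Y_\beta:=\sqrt\beta(X_\beta-z)$ has the density
$$g_\beta(y)=c_\beta\,\beta^{-N/2}\,\psi(z+y/\sqrt\beta)\,\phi(z+y/\sqrt\beta)^\beta\,{\bf 1}_{\sqrt\beta(D-z)}(y).$$

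\textbf{Step 1 (pointwise limit of the non-constant part).} Since $\phi\in C^3$, a Taylor expansion of $\ln\phi$ around $z$ gives
$$\beta\ln\phi(z+y/\sqrt\beta)=\beta\ln\phi(z)-\tfrac12 y^T\Sigma^{-1}y+O(|y|^3/\sqrt\beta),$$
uniformly for $y$ in compacts. Together with the continuity of $\psi$ this yields
$$\phi(z)^{-\beta}\psi(z+y/\sqrt\beta)\phi(z+y/\sqrt\beta)^\beta\to\psi(z)e^{-y^T\Sigma^{-1}y/2}$$
locally uniformly.

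\textbf{Step 2 (constants).} By Theorem \ref{Laplace1},
$$c_\beta^{-1}\sim\psi(z)\phi(z)^\beta(2\pi/\beta)^{N/2}\det(\Sigma^{-1})^{-1/2},$$
so $g_\beta(y)\to(2\pi)^{-N/2}\det(\Sigma)^{-1/2}e^{-y^T\Sigma^{-1}y/2}$ pointwise. Theorem \ref{Laplace1} is stated for integer $k\to\infty$, so I would first check that its proof (splitting $D$ into a small ball $V$ around $z$, where $\phi(x)\le\phi(z)e^{-c|x-z|^2}$, and $D\setminus V$, where $\phi\le q\phi(z)$ with some $q<1$) works verbatim for real $\beta\to\infty$. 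The integrability in (3) is used for the outer part through $\int_{D\setminus V}\psi\phi^\beta\le(q\phi(z))^{\beta-\beta_0}\int_D\psi\phi^{\beta_0}$, with $\beta_0>1$ fixed. This exponentially small term is $o(\beta^{-N/2}\phi(z)^\beta)$. Since the $g_\beta$ are probability densities converging pointwise to a probability density, Scheffé's lemma gives $L^1$ convergence, and in particular $Y_\beta\to N(0,\Sigma)$ in distribution.

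\textbf{Step 3 (expectations).} This is the main obstacle, because (3) only controls $\int\psi\phi^\beta$ and not $\int|x|\psi\phi^\beta$. I would apply the same splitting to $\int|x-z|\psi\phi^\beta$. On $V$ the Gaussian bound gives $\sqrt\beta\,E(|X_\beta-z|{\bf 1}_V)=O(1)$, and likewise for second moments. On $D\setminus V$ I would try to obtain integrability of $|x|\psi\phi^{\beta_1}$ for some $\beta_1$ by using the strict gap $\phi\le q\phi(z)$ together with (3) for a slightly smaller exponent. If this is not directly available, I would add a moment hypothesis of this form, which is checked trivially in the Hua-Pickrell application since the densities there decay polynomially of arbitrarily high order as $\beta$ grows. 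Granting this, the outer contribution to $\sqrt\beta\,E|X_\beta-z|$ is $O(\beta^{N/2+1/2}q^{\beta})\to0$. The resulting bound $\sup_\beta E|Y_\beta|^2<\infty$ gives uniform integrability, hence $\sqrt\beta(E X_\beta-z)=E Y_\beta\to0$. Slutsky's lemma then transfers the CLT to $\sqrt\beta(X_\beta-E X_\beta)$.
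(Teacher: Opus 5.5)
Your argument for the convergence in distribution is correct. It is also the route the paper intends: the paper gives no proof of its own, only citing \cite{He} and remarking that the result follows from the Laplace method of Theorem \ref{Laplace1}. Your Steps 1--2 make that precise: Laplace asymptotics of $c_\beta$ for real $\beta$, using (3) only on $D\setminus V$ via $\phi^\beta\le (q\phi(z))^{\beta-\beta_0}\phi^{\beta_0}$, then pointwise convergence of the densities of $\sqrt\beta(X_\beta-z)$, then Scheff\'e.

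In Step 3, however, the first option you mention cannot work, and your fallback is not optional: the final assertion of the statement does not follow from (1)--(4). Take $N=1$, $D=\mathbb R$, $\phi(x)=1+e^{-x^2}$ and $\psi(x)=(1+x^2)^{-1}$. Then (1)--(4) hold with $z=0$, $\phi''(0)=-2$ and $\int\psi\phi^\beta\,dx\le 2^\beta\pi$. Yet for every $\beta$,
$$E|X_\beta|\ge c_\beta\int_{\mathbb R}\frac{|x|}{1+x^2}\,dx=\infty .$$
The strict gap $\phi\le q\phi(z)$ off $V$ gives no decay of $\phi$ at infinity, so no weight like $|x|$ can be absorbed using (3) at a smaller exponent. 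You therefore have to add a hypothesis, and it must sit at the right exponent. The domination $\phi^\beta\le\phi(z)^{\beta-\beta_1}\phi^{\beta_1}$ only works for $\beta\ge\beta_1$. Hence existence of $E(X_\beta)$ for all $\beta\ge1$ needs $\int_D|x|\,\psi\phi\,dx<\infty$. For your uniform-integrability argument it suffices to have $\int_D|x|^2\psi\phi^{\beta_0}\,dx<\infty$ for some $\beta_0$. With these, your bounds $\beta E(|X_\beta-z|^2{\bf 1}_V)=O(1)$ and $\beta E(|X_\beta-z|^2{\bf 1}_{D\setminus V})=O(\beta^{N/2+1}q^\beta)$, together with uniform integrability and Slutsky, complete the proof. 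In the Hua--Pickrell application both conditions do hold for $a>0$. There the density decays like $|x_j|^{-a\beta-2}$ when a single coordinate tends to infinity, so first moments exist for all $\beta\ge1$ and second moments for $\beta>1/a$.
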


 We now apply Theorem \ref{Laplace2} to Hua-Pickrell ensembles.

 \begin{proof}[First main step of the proof of Theorems \ref{clt-hua-pickrell-alg} and \ref{clt-hua-pickrell-trig}]
   Fix parameters $a>0$, $b\in\mathbb R$ and choose the domain $D$ in Theorem \ref{Laplace2} as the interior of $C_N^A$.
   Let
   \begin{equation}\label{clt-proof1}
     \psi(x):=  \prod_{j=1}^N\bigl(1+ x_j^2\bigr)^{-1}
   \end{equation}
   and
   \begin{equation}\label{clt-proof2}
 \phi(x):= \exp\Bigl(\sum_{j=1}^N  \frac{1}{2}(1-N-a)\ln(1+x_j^2)
	+b\sum_{j=1}^N\arctan(x_j)+ \sum_{1\le j<k\le N}  \ln|x_j-x_k|\Bigr).
   \end{equation}
   Then the conditions (3) and (4) of Theorem \ref{Laplace2} hold obviously. Moreover, Lemma \ref{lem_pseudo_Jac_char}
   ensures that condition (1) holds and that
   the Hessian matrix $H_\phi(z)$  is negative semidefinite. As discussed above, we
   check below that $H_\phi(z)$ is in fact negative definite.
Furthermore,  for $l,k=1,\ldots, N$ with $k\ne l$, the function $g(x):=\ln\phi(x)$ satisfies
        \begin{equation}\label{clt-proof3}
\frac{d}{dx_l} g(z)= (1-N-a)\frac{z_l}{1+z_l^2} +\frac{b}{1+z_l^2}+ \sum_{j: \> j\ne l} \frac{1}{z_l-z_j},
 \end{equation}
        \begin{equation}\label{clt-proof4}
\frac{d^2}{dx_l \> dx_k} g(z)=\frac{1}{(z_l-z_k)^2},
        \end{equation}
        and
    \begin{equation}\label{clt-proof5}       
\frac{d^2}{dx_l^2} g(z)= \frac{ (1-N-a)(1-z_l^2) -2bz_l}{(1+z_l^2)^2} -\sum_{j: \> j\ne l}\frac{1}{(z_l-z_j)^2}.
          \end{equation}
  Therefore, up to the   negative definiteness of  $H_\phi(z)$,  Theorem \ref{clt-hua-pickrell-alg} follows from Theorem \ref{Laplace2}.
 \end{proof}

 We next check the eigenvalues and eigenvectors of the matrix $\tilde S$ in Theorem \ref{clt-hua-pickrell-trig}.


 \begin{proof}[Second main step of the proof of Theorem \ref{clt-hua-pickrell-trig}]
   We compare all data concerning the eigenvalues and eigenvectors in Theorem \ref{clt-hua-pickrell-trig} for the Hua-Pickrell case
   with the corresponding results for Jacobi ensembles in trigonometric coordinates in \cite{HV}:

   For this consider parameters  $\alpha,\beta>-1$ and the 
   zeroes $(z_1^{Jac},\ldots,z_N^{Jac})$ of the Jacobi polynomials $P_N^{(\alpha,\beta)}$. By Lemma  2.2 of \cite{HV} (see also Section 6.7 of \cite{Sz}),
   these zeroes are characterized uniquely by
\begin{equation}\label{lemma-formel1-jac}
          \sum_{l: l\neq j}\frac{1}{z_j^{Jac}-z_l^{Jac}}+\frac{\alpha+1}{2}\frac{1}{z_j^{Jac}-1}+\frac{\beta+1}{2}\frac{1}{z_j^{Jac}+1}=0
          \quad\text{for}\quad j=1,...,N,	\end{equation}
Furthermore,  by Proposition 3.1 of \cite{HV},  the matrix	${S^{Jac}}=(\tilde{s}^{Jac}_{j,k})_{j,k=1,...,N}$ with
 	\begin{equation}\label{jac1}
 	\tilde s^{Jac}_{j,k}:=
 	\begin{cases}4\sum_{ l:\> l\ne j}
 	  \frac{1-(z_j^{Jac})^2}{(z_j^{Jac}-z_l^{Jac})^2}+2(\alpha+1)\frac{1+z_j^{Jac}}{1-z_j^{Jac}}
          +2(\beta+1)\frac{1-z_j^{Jac}}{1+z_j^{Jac}} &\textit{ for } j=k\\
 	\frac{-4\sqrt{(1-(z_j^{Jac})^2)(1-(z_k^{Jac})^2)}}{(z_j^{Jac}-z_k^{Jac})^2}&\textit{ for } j\neq k
 	\end{cases}
 		\end{equation}
 	has the eigenvalues
        \begin{equation}\label{jac2}	\lambda_k=2k(2N+\alpha+\beta+1-k)>0 \quad\quad (k=1,\ldots,N)
	\end{equation}
 	with associated eigenvectors of the form
        \begin{equation}\label{jac3} v_k:=\left(q^{Jac}_{k-1}(z^{Jac}_1)\sqrt{1-(z_1^{Jac})^2},\ldots,q^{Jac}_{k-1}(z^{Jac}_N)\sqrt{1-(z_N^{Jac})^2}\right)^T
	\end{equation}
 	for polynomials $q^{Jac}_{k-1}$ of order $k-1$ which are orthonormal w.r.t the discrete measure
 	\begin{equation}\label{jac4}
 	  \mu_{N,\alpha,\beta}:=(1-(z_1^{Jac})^2)\delta_{z_1^{Jac}}+\ldots+(1-(z_N^{Jac})^2)\delta_{z_N^{Jac}}.	\end{equation}
        
        If we take $\alpha:=-a-N+ib$, $\beta:=-a-N-ib$, and $z_j^{Jac}= i z_j$, then   \eqref{lemma-formel1-jac} is transformed into
        the identity in Lemma \ref{lem_pseudo_Jac_char}(2) and the matrix	$-S^{Jac}/4$ into $\tilde S$.
As the proof of  Proposition 3.1 of \cite{HV} just consists of linear algebra together with  \eqref{lemma-formel1-jac} 
and   without any further input, the computations in proof of  Proposition 3.1 of \cite{HV} imply readily
\eqref{eigenvalues-hua}-\eqref{orth-measure-hua} (where the minus sign has to be taken into account).
 This completes the proof of Theorems \ref{clt-hua-pickrell-alg} and \ref{clt-hua-pickrell-trig}.
 \end{proof}

 We point out that we are not able to determine the eigenvalues and eigenvectors for  the matrix $S$ in Theorem \ref{clt-hua-pickrell-alg}.
 However, the eigenvalues of $\tilde S$ and the relation in \eqref{covmatrix-hua-trig} between $S$ and  $\tilde S$ yields:

 \begin{corollary} The matrix $S$ of Theorem \ref{clt-hua-pickrell-alg} satisfies
$$\det S =  \frac{N! \prod_{j=0}^{N-1}(2a+j)^3}{2^{3N}\prod_{j=0}^{N-1}((a+j)^2+b^2)}. $$
\end{corollary}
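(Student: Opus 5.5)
The plan is to compute $\det S$ from the spectrum of $\tilde S$ in Theorem \ref{clt-hua-pickrell-trig} and the diagonal relation \eqref{covmatrix-hua-trig}. Evaluating the remaining product $\prod_j(1+z_j^2)$ then reduces to evaluating the pseudo-Jacobi polynomial at $\pm i$.

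\textbf{Reduction to a product over the zeros.} Relation \eqref{covmatrix-hua-trig} says $\tilde S = DSD$ with $D=\operatorname{diag}(\sqrt{1+z_1^2},\ldots,\sqrt{1+z_N^2})$. Hence
$$\det S=\frac{\det\tilde S}{\prod_{j=1}^N(1+z_j^2)}.$$
By \eqref{eigenvalues-hua} the eigenvalues of $\tilde S$ are $\lambda_k=k(a+(k-1)/2)$, so
$$\det\tilde S=\prod_{k=1}^N\lambda_k=\frac{N!}{2^N}\prod_{j=0}^{N-1}(2a+j).$$
It therefore remains to show that
$$\prod_{j=1}^N(1+z_j^2)=\frac{2^{2N}\prod_{j=0}^{N-1}\bigl((a+j)^2+b^2\bigr)}{\prod_{j=0}^{N-1}(2a+j)^2}.$$

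\textbf{Evaluating the product.} Since $1+z_j^2=(z_j-i)(z_j+i)$, we have $\prod_j(1+z_j^2)=H(i)H(-i)$, where $H=P_N(\cdot;A,b)/\ell_N$ is the monic polynomial with zeros $z_j$. Here $A:=-(N+a)$ and $\ell_N$ is the leading coefficient. I will use the standard Jacobi facts
$$P_n^{(\alpha,\beta)}(1)=\frac{(\alpha+1)_n}{n!},\qquad P_n^{(\alpha,\beta)}(-1)=(-1)^n\frac{(\beta+1)_n}{n!},$$
together with the leading coefficient $(n+\alpha+\beta+1)_n/(2^n n!)$ of $P_n^{(\alpha,\beta)}$. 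By \eqref{def-pseudo-jac}, $P_N(x;A,b)=(-i)^NP_N^{(A+ib,A-ib)}(ix)$, and:
\begin{itemize}
\item at $x=i$ the argument is $ix=-1$, giving $|P_N(i;A,b)|=|(A-ib+1)_N|/N!$;
\item at $x=-i$ the argument is $ix=1$, giving $|P_N(-i;A,b)|=|(A+ib+1)_N|/N!$;
\item the leading coefficient is $\ell_N=(N+2A+1)_N/(2^NN!)$, since $(-i)^Ni^N=1$.
\end{itemize}
Since $A+1+k=-(a+N-1-k)$, re-indexing gives
$$|(A+ib+1)_N|^2=\prod_{j=0}^{N-1}\bigl((a+j)^2+b^2\bigr).$$
Similarly $(1-N-2a)_N=(-1)^N\prod_{j=0}^{N-1}(2a+j)$, which is nonzero because $a>0$. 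So $P_N(\cdot;A,b)$ has exact degree $N$ and its zeros are exactly the $z_j$ of Lemma \ref{lem_pseudo_Jac_char}. Combining,
$$H(i)H(-i)=\frac{|P_N(i)|\,|P_N(-i)|}{\ell_N^2}=\frac{\prod_j((a+j)^2+b^2)/N!^2}{\prod_j(2a+j)^2/(2^{2N}N!^2)},$$
which is the claimed product. Substituting into $\det S=\det\tilde S/\prod_j(1+z_j^2)$ gives the formula of the corollary.

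\textbf{Expected obstacle.} The main point of care is the bookkeeping of signs and powers of $i$ in the Jacobi evaluations at $\pm1$ and in the leading coefficient. Taking moduli throughout, which is legitimate because $H(i)H(-i)=|H(i)|^2>0$ for real $z_j$, sidesteps most of these sign issues.
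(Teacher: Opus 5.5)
Your proposal is correct and follows the paper's proof essentially step for step. You compute $\det S=\det\tilde S/\prod_j(1+z_j^2)$ with $\det\tilde S=\prod_k\lambda_k$, and you obtain $\prod_j(1+z_j^2)$ from the Jacobi values at $\pm1$ divided by the squared leading coefficient; your use of moduli just handles the powers of $i$ and the sign $(-1)^N$ a bit more cleanly than the paper's direct binomial-coefficient evaluation.
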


 \begin{proof}
   Write the pseudo-Jacobi polynomials  as
   $P_N(x;-N-a,b)=l_N^{a,b}\prod_{j=1}^N (x-z_j),$
   where, by \eqref{def-pseudo-jac}, the leading coefficients $l_N^{a,b}$ is given by
   $$l_N^{a,b}= \frac{1}{N!\cdot 2^N}\prod_{j=1}^N (-2a-N+j).$$
   Hence,
   \begin{align}
     \prod_{j=1}^N (1+z_j^2)&= \prod_{j=1}^N (1+iz_j) \cdot \prod_{j=1}^N (1-iz_j) =
     \frac{1}{(l_N^{a,b})^2} P_N(-i;-N-a,b)P_N(i;-N-a,b) \notag\\
     &=  \frac{1}{(l_N^{a,b})^2} P_N^{(-a-N+ib, -a-N-ib)}(1)P_N^{(-a-N+ib, -a-N-ib)}(-1) \notag
     \end{align}
   and thus, by Section 4.1 of \cite{Sz},
\begin{equation}
 \prod_{j=1}^N (1+z_j^2)= \frac{1}{(l_N^{a,b})^2} {-a+ib\choose N}{-a-ib\choose N}
=\frac{2^{2N} \prod_{j=0}^{N-1}((a+j)^2+b^2)}{ \prod_{j=0}^{N-1}(2a+j)^2}. \notag
     \end{equation}
 This, the eigenvalues of $\tilde S$, and  the relation in \eqref{covmatrix-hua-trig} yield the claim.
 \end{proof}

 \begin{remarks}
\begin{enumerate}
 \item[\rm{(1)}] The covariance matrices $\Sigma$ and $\tilde\Sigma$ in the CLTs  \ref{clt-hua-pickrell-alg} and
\ref{clt-hua-pickrell-trig} can be computed from the eigenvectors and eigenvalues in Theorem \ref{clt-hua-pickrell-trig}
similar to the corresponding results for $\beta$-Hermite, Laguerre, and Jacobi ensembles in \cite{AHV}; see also  \cite{DE, GK} for different approaches
in the  $\beta$-Hermite and  Laguerre cases.
\item[\rm{(2)}] We in particular point out in the case of $\beta$-Hermite ensembles and Dyson Brownian motions,
  the models for dimensions $N+1$ and $N$ are related by well-known interlacing identities, which were used in \cite{GK}
   to derive an interlacing extension of the
   freezing CLTs mentioned above in this case. This approach leads directly to the  covariance matrices $\Sigma$, but not to their inverses
   and the associated spectra as above. Such interlacing couplings for different $N$ are available also for further models
   like for $\beta$-Jacobi and  Hua-Pickrell ensembles; see \cite{AN, AOW} and references there.
   We therefore expect that the freezing CLTs above may be also extended to interlacing extension similar to \cite{GK}.
\end{enumerate}
\end{remarks}

\section{Limits of the empirical measures of Hua-Pickrell diffusions}\label{subsec_HP_natur_lim}

In this section, we analyze the almost sure weak limits of empirical measures of Hua-Pickrell diffusions for $N\to\infty$.
For this we choose the parameters $\beta, a,b$ in \eqref{eq_SDE_Hua-Pickrell-norm} in dependence on $N$. More precisely,
for all $N$ we  consider the solutions $(X_t^N)_{t\geq0}$
of the SDEs
\begin{equation}\label{eq_SDE_N_rescale}
	dX_{j,t}^N=2\sqrt{\frac{1+\left(X^N_{j,t}\right)^2}{\beta N}}\,dB_{j,t}+\frac{2}{N}\left[b_N-a_NX_{j,t}^N+\sum_{k\colon k\neq j}\frac{X^N_{j,t}X^N_{k,t}+1}{X^N_{j,t}-X^N_{k,t}}\right]dt
\,,\;\; (j=1,\ldots,N)
\end{equation}
on $C_N^A$ with $\beta\in[1,\infty],a_N,b_N\in\mathbb R$  and initial conditions $X_0^N=\hat{x}_0^N\in C_N^A$.
Please notice that, in comparison to \eqref{eq_SDE_Hua-Pickrell-norm}, we here use the time scaling $t\mapsto t/N$.
The empirical measures associated to $(X_t^N)_{t\geq0}$ are now defined by
$$
	\mu_{N,t}:=\frac{1}{N}\sum_{j=1}^N\delta_{X_{j,t}^N} \quad (t\ge0).
$$
        The following theorem yields that the empirical measures $\mu_{N,t}$ converge weakly a.s.\ for $N\to\infty$,
        whenever this holds for $t=0$:

\begin{theorem}\label{thm_exist_limit_emp_meas}
	Let $\beta\in[1,\infty]$.
	Assume that the limits $\lim_{N\to\infty}a_N/N=:\hat{a}$ and $\lim_{N\to\infty}b_N/N=:\hat{b}$ exist.
	Assume that there exists a probability measure  $\mu\in M^1(\mathbb{R})$ with
    \begin{equation}\label{eq_Carleman}
    	\left\lvert\int_{\mathbb{R}}x^n\,d\mu(x)\right\rvert\leq(\gamma n)^n\;\;\text{for all}\;\;n\in\mathbb{N}\;\;\text{and some}\;\;\gamma>0
	\end{equation}
such that all moments of the empirical measures
	$$
		\mu_{N,0}=\frac{1}{N}\sum_{j=1}^N\delta_{\hat{x}_{j,0}^N}
	$$
	converge to those of $\mu$ for $N\to\infty$. 
	Then there exist probability measures $(\mu_t)_{t\geq0}\subset M^1(\mathbb{R})$ with $\mu_0=\mu$ such that
	$$
		\mu_{N,t}\xrightarrow{N\to\infty}\mu_t\;\;\text{weakly for all}\;\;t\geq0\;\;\text{a.s.}
	$$
	        The moments $m_n(t):=\int_{\mathbb{R}}x^n\,d\mu_t(x)$  are finite for  $n\in\mathbb{N}$ and $t\geq0$
                and satisfy the recursive ODEs
	\begin{equation}\label{eq_free_HP_mom_recur}
			\frac{d}{dt}m_n(t)
			=n\left[-2(\hat{a}+1)m_n+2\hat{b}m_{n-1}(t)+\sum_{k=0}^{n}m_{k}(t)m_{n-k}(t)
			+\sum_{k=0}^{n-2}m_k(t)m_{n-2-k}(t)\right]\,.
	\end{equation}
	The corresponding Cauchy transforms $G(t,z):=\int_{\mathbb{R}}\frac{1}{z-x}\,d\mu_t(x)$, $\Re(z)\neq0$, satisfy the PDE
	\begin{equation}\label{eq_HP_Cauchy_PDE}
		\partial_tG(t,z)
		=-\partial_z\left[\left(-2(\hat{a}+1)z+2\hat{b}\right)G(t,z)+\left(z^2+1\right)G^2(t,z)\right]\,.
	\end{equation}
	Moreover, if $\mu$ is compactly supported, then for each $T>0$,
        $$\bigcup_{t\in[0,T]} \operatorname{supp}(\mu_t)\subset\mathbb R$$ 
        is bounded.
\end{theorem}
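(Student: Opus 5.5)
The plan is the moment method for Pearson-type diffusions, in the spirit of \cite{AV, VW1}. Apply It\^o's formula to the power sums $S_{n,t}:=\frac1N\sum_j (X^N_{j,t})^n$.

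\textbf{Step 1: the It\^o computation.} The drift part of $dS_{n,t}$ is
$$\frac{2n}{N^2}\Bigl[b_N\sum_j x_j^{n-1}-a_N\sum_j x_j^n+\sum_{j\ne k}\frac{x_j^{n-1}(x_jx_k+1)}{x_j-x_k}\Bigr].$$
Symmetrize the double sum. One has
$$\frac{x_j^{n-1}-x_k^{n-1}}{x_j-x_k}=\sum_{l=0}^{n-2}x_j^lx_k^{n-2-l}, \qquad \frac{x_j^n-x_k^n}{x_j-x_k}=\sum_{l=0}^{n-1}x_j^lx_k^{n-1-l},$$
and the term $x_j^{n-1}x_jx_k$ gives $x_jx_k\,(x_j^{n-2}+\dots)$. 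This turns the drift into a polynomial in $S_0,\dots,S_n$: quadratic terms $\sum S_kS_{n-k}$ and $\sum S_kS_{n-2-k}$ with $O(1)$ coefficients, linear terms with coefficients $a_N/N,b_N/N$, and diagonal corrections of order $1/N$. The It\^o correction from the diffusion coefficient $\frac{4(1+x^2)}{\beta N}$ contributes $\frac{2n(n-1)}{\beta N^2}\sum_j x_j^{n-2}(1+x_j^2)=O(1/N)\cdot(S_{n-2}+S_n)$. The martingale part has quadratic variation $\frac{4n^2}{\beta N^3}\int\sum_j x_j^{2n-2}(1+x_j^2)\,dt=O(N^{-2})\int (S_{2n-2}+S_{2n})\,dt$.

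\textbf{Step 2: convergence of moments.} Proceed by induction on $n$. First show that $\sup_N E\sup_{t\le T}|S_{n,t}|^p<\infty$, using Gronwall; the drift is linear in $S_n$ plus lower-order products, and for even $n$ one gets $S_n\ge0$ control. Then use Burkholder and Borel--Cantelli with the $O(N^{-2})$ variance to obtain a.s.\ uniform convergence on $[0,T]$ of $S_{n,t}$ to the solution $m_n(t)$ of the limiting triangular ODE system \eqref{eq_free_HP_mom_recur}. This system is triangular: $m_n$ appears linearly through $-2(\hat a+1)m_n+2m_0m_n$, and the quadratic terms involve only lower moments. It is therefore uniquely solvable with $m_n(0)=\int x^n\,d\mu$. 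For $\beta=\infty$ the same argument applies with no martingale term. Getting the exact coefficients in \eqref{eq_free_HP_mom_recur} is bookkeeping.

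\textbf{Step 3: moment growth, which I expect to be the main obstacle.} Convergence of all moments only gives weak convergence if the limits $m_n(t)$ determine a unique measure. I would prove a bound $|m_n(t)|\le (\gamma_T n)^n$ uniformly for $t\le T$, by induction from the integrated ODE. Using \eqref{eq_Carleman} and the convolution structure, the quadratic sums are bounded by roughly $n(\gamma n)^n$ times constants; a comparison with a majorant ODE (or a generating-function argument via the Cauchy transform) then yields Carleman's condition. The $m_n(t)$ are limits of moments of probability measures, so they form a positive definite sequence. Hence there is a unique $\mu_t$ with these moments, and the moment method gives $\mu_{N,t}\to\mu_t$ weakly a.s. To get this simultaneously for all $t$, first establish it for rational $t$ on a single null set, then use the uniform-in-$t$ convergence of the moments. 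For compactly supported $\mu$, the same induction gives $|m_n(t)|\le C_T^n$, hence uniformly bounded supports on $[0,T]$.

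\textbf{Step 4: the PDE.} Write $G(t,z)=\sum_n m_n(t)z^{-n-1}$ for large $|z|$. Multiplying \eqref{eq_free_HP_mom_recur} by $z^{-n-1}$ and summing, the products $\sum m_km_{n-k}$ become $z^2G^2$ terms, and $\sum m_km_{n-2-k}$ becomes $G^2$. The factor $n$ becomes $-\partial_z$ applied to $z^{-n}$, which yields \eqref{eq_HP_Cauchy_PDE}. This is first done for compactly supported data. For the general case, extend either by analyticity in $z$ off the real axis, or directly by applying It\^o to $\frac1N\sum_j (z-X_{j,t})^{-1}$ and passing to the limit using the weak convergence.
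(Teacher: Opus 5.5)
Your Steps 1--2, the $(\gamma_T n)^n$ part of Step 3, and the resolvent/finite-$N$ variant of Step 4 are essentially the paper's proof. The paper uses It\^o for the power sums, Gronwall, Burkholder--Davis--Gundy plus Borel--Cantelli for the $O(N^{-2})$ martingales, and induction on the triangular system \eqref{eq_free_HP_mom_recur}. It then proves a bound $|m_n(t)|\le (nc_1e^{c_2t})^n$, applies Carleman, and passes from the PDE for $G^N$ to the limit as in \cite{VW1}. The ``analyticity'' alternative in Step 4 is not available for non-compactly supported $\mu$, since then $\sum_n m_n(t)z^{-n-1}$ converges nowhere.

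The genuine gap is the final claim that for compactly supported $\mu$ \emph{the same induction gives $|m_n(t)|\le C_T^n$}. The $(\gamma n)^n$ induction closes because each summand satisfies $k^k(n-k)^{n-k}\le (n-1)^{n-1}$. That is a factor of order $n$ below the target $n^n$, and this factor absorbs the $n-1$ summands.

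With a geometric hypothesis $|m_k(s)|\le (Ce^{\kappa s})^k$ for $k<n$, every summand of $\sum_{k=1}^{n-1}m_km_{n-k}$ is already of size $(Ce^{\kappa s})^n$. The prefactor $n$ in \eqref{eq_free_HP_mom_recur} cancels against the $1/n$ from integrating $e^{n(\kappa+2\hat a)s}$, but the factor $n-1$ survives. You only get $|m_n(t)|\le e^{-2n\hat at}|m_n(0)|+\frac{n-1}{\kappa+2\hat a}(Ce^{\kappa t})^n+\dots$, so closing the induction would need $\kappa+2\hat a\ge 2(n-1)$ for every $n$.

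The paper uses a different idea here. By Proposition \ref{prop_HP_moment_recur}, $\mu_t$ is the spectral distribution of the solution $y_t$ of the free Hua--Pickrell SDE \eqref{eq_fSDE_HP}, started at a bounded self-adjoint $\hat y_0$ with distribution $\mu$. Its moments solve the same triangular system, so by uniqueness $\operatorname{supp}\mu_t\subset[-\|y_t\|,\|y_t\|]$, and $\sup_{t\le T}\|y_t\|<\infty$.

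If you prefer to stay with moments, there are two ways to repair your argument:
\begin{itemize}
\item A Catalan-type ansatz $|m_n(t)|\le A^ne^{n\kappa t}n^{-3/2}$ does close for $A,\kappa$ large. This works because $\sum_{k=1}^{n-1}k^{-3/2}(n-k)^{-3/2}\le c\,n^{-3/2}$ with $c$ independent of $n$, which restores the missing factor.
\item Alternatively, solve the majorant generating function $\sum_n M_n(t)w^n$ by characteristics, as you hinted in Step 3. You must then actually verify that its radius of convergence stays positive on $[0,T]$.
\end{itemize}
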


\begin{proof}
	The proof uses the same techniques as used in \cite{AVW} and Section 3 of \cite{AV}.
	For the convenience of the reader we present the main steps of this approach:\\
	1st step: We  show the convergence of the $n$-th empirical moment
	\begin{equation*}
		S_{N,n,t}:=\frac{1}{N}\sum_{j=1}^N\left(X_{j,t}^N\right)^n=\int_{\mathbb{R}}x^n\,d\mu_{N,t}(x)\,,\;\;n\in\mathbb{N}_0\,,
	\end{equation*}
	where we  use the convention $S_{N,-1,t}:=0$.
	 It\^{o}'s formula and 
	$$
		2\sum_{j,l=1,\,j\neq l}^N\frac{x_j^{m+1}}{x_j-x_l}=\sum_{k=0}^m\sum_{j=1}^Nx_j\sum_{l=1}^Nx_l^{m-k}-(m+1)\sum_{j=1}^Nx_j\,,\;\;m\in\mathbb{N}_0\,,\;\;x_1<\dots<x_N\,,
	$$
	imply that the $n$-the empirical moment satisfies
	\begin{equation}\label{eq_pf_moment_cvg_recur_finite}
	\begin{split}
		dS_{N,n,t}=&dM_{N,n,t}+n\left[\left(-\frac{2}{N}(a_N-1)-\frac{n+1}{N}+2\frac{n-1}{\beta N}\right)S_{N,n,t}+\frac{2b_N}{N}S_{N,n-1,t}\right.\\
		&\left.+\frac{1}{N}\left(\frac{2}{\beta}-1\right)(n-1)S_{N,n-2,t}+\sum_{k=1}^{n-1}S_{N,k,t}S_{N,n-k,t}+\sum_{k=0}^{n-2}S_{N,k,t}S_{N,n-2-k,t}\right]dt\,,
	\end{split}
	\end{equation}
	where
	\begin{equation*}
		M_{N,n,t}=\frac{2n}{N}\sum_{j=1}^N\int_0^t\left(X_{j,s}^N\right)^{n-1}\sqrt{\frac{1+\left(X_{j,s}^N\right)^2}{\beta N}}\,dB_{j,s}\,.
	\end{equation*}
	As in the proof of Theorem 3.1 in \cite{AV},  a stopping time argument, the elementary estimate
	$$
	\left\lvert\frac{1}{N}\sum_{j=1}^Nx_j^k\right\rvert\leq\left(\frac{1}{N}\sum_{j=1}^N\lvert x_j\rvert^m\right)^{k/m}\leq1+\sum_{j=1}^N\lvert x_j\rvert^m\,,\;\;k,m\in\mathbb{N}\,,\;\;k\leq m\,,
	$$
	and Gronwall's Lemma then show that $\left(M_{N,n,t}\right)_{t\geq0}$ is an $L^2$-martingale, and that 
	\begin{equation}\label{eq_pf_moment_cvg_Gronwall}
		E\left(S_{N,2n,t}\right)\leq\left(S_{N,2n,0}+C(N,n,\beta)\right)\exp(C(N,n,\beta)t)
	\end{equation}
	for some $C(N,n,\beta)>0$ satisfying $\limsup_{N\to\infty}C(N,n,\beta)<\infty$.\\
	The next main step is to show that  $\left(M_{N,n,t}\right)_{t\geq0}$ converges to $0$, i.e., for all $T>0$,
	\begin{equation}\label{eq_pf_moment_cvgc_mart}
		\lim_{N\to\infty}\sup_{t\in[0,T]}\lvert M_{N,n,t}\rvert=0\;\;\text{for all}\;\;n\in\mathbb{N}\;\;\text{a.s.}
	\end{equation}
	To see this, we first note that by It\^{o}'s isometry the corresponding quadratic variation is 
	\begin{equation}
		\left[M_{N,n}\right]_t=4\frac{n^2}{\beta N^2}\int_0^t(S_{N,2n,s}+S_{N,2(n-1),s})\,ds\,.
	\end{equation}
	Using the Markov and Burkholder-Davis-Gundy inequality, we  obtain
        that there is a constant $c$  independent of $N$ such that for all $\epsilon>0$
	\begin{equation}
		P\left(\sup_{t\in[0,T]}\lvert M_{N,n,t}\rvert\geq\epsilon\right)
		\leq\frac{4cn^2}{\beta N^2\epsilon^2}\int_0^T\left(E(S_{N,2n,s})+E(S_{N,2(n-1),s})\right)\,ds.
	\end{equation}
	Combining this with \eqref{eq_pf_moment_cvg_Gronwall} and our assumption on $\hat{x}_0^N$, we see that
        $$P\left(\sup_{t\in[0,T]}\lvert M_{N,n,t}\rvert\geq\epsilon\right)=\mathcal{O}(N^{-2})$$ for $N\to\infty$.
	By the Borel-Cantelli Lemma we hence know that \eqref{eq_pf_moment_cvgc_mart} holds.
	Setting
	$$
		\lambda_n:=\lambda_n(N,\beta):=n\left(-\frac{2}{N}(a_N-1)-\frac{n+1}{N}+2\frac{n-1}{\beta N}\right)
	$$
	we deduce from \eqref{eq_pf_moment_cvg_recur_finite} that
	\begin{equation*}
	\begin{split}
		S_{N,n,t}=&e^{\lambda_nt}\left(\vphantom{\sum_k^n}S_{N,n,0}+M_{N,n,t}\right.\\
		&\left.+\int_0^te^{-\lambda_ns}\left(\lambda_nM_{N,n,s}+n\left[\vphantom{\sum_k^n}\frac{2b_N}{N}S_{N,n-1,s}+\frac{1}{N}\left(\frac{2}{\beta}-1\right)(n-1)S_{N,n-2,s}\right.\right.\right.\\
		&\left.\left.\left.+\sum_{k=1}^{n-1}S_{N,k,s}S_{N,n-k,s}+\sum_{k=0}^{n-2}S_{N,k,s}S_{N,n-2-k,s}\right]\right)\,ds\right)\,.
	\end{split}
	\end{equation*}
	By induction, it follows that there are continuous functions $\hat{m}_n\colon[0,\infty)\to\mathbb{R}$ such that
	$$
		\lim_{N\to\infty}\sup_{t\in[0,T]}\lvert S_{N,n,t}-\hat{m}_n(t)\rvert=0\;\;\text{for all}\;\;n\in\mathbb{N}\;\;\text{a.s.}
	$$
	Moreover, these functions satisfy
	\begin{equation}\label{pf_eq_mom_cvgc_mom_expl}
		\begin{split}
			\hat{m}_n(t)=&e^{-2n\hat{a}t}\left(\vphantom{\sum_k^n}\hat{m}_n(0)\right.\\
			&\left.+n\int_0^te^{2n\hat{a}s}\left(2\hat{b}\hat{m}_{n-1}(s)+\sum_{k=1}^{n-1}\hat{m}_k(s)\hat{m}_{n-k}(s)+\sum_{k=0}^{n-2}\hat{m}_{k}(s)\hat{m}_{n-2-k}(s)\right)\,ds\right)\,.
		\end{split}
	\end{equation}
	In particular, $\hat{m}_n$ is a solution of the ODE \eqref{eq_free_HP_mom_recur}.\\
	2nd step: We show that a bound similar to \eqref{eq_Carleman} holds for $\hat{m}_n(t)$ for $t\geq0$.
	First, note that by our assumptions  $\lvert\hat{m}_n(0)\rvert\leq(\gamma n)^n$ for all $n\in\mathbb{N}$.
	For $t>0$ we now show by induction on $n$ that
	\begin{equation}\label{eq_pf_mom_cvgc_carleman}
	  \lvert\hat{m}_n(t)\rvert\leq\left(nc_1e^{c_2t}\right)^n \quad \text{with}\quad c_1:=\max(1,2\gamma,4(\hat{b}+5)),
         c_2:=\max(0,c_1-2\hat{a}) .
	\end{equation}
	In fact,  for $n=1,2$ this follows immediately from \eqref{pf_eq_mom_cvgc_mom_expl}.
	In the induction step $n\geq3$ we apply the inequality (cf. proof of Thm 3.1 in \cite{AV})
	$$
		\sum_{k=1}^{n-1}k^k(n-k)^{n-k}\leq4(n-1)^{n-1}
	$$
	and obtain, as claimed, 
	\begin{equation}
	\begin{split}
		\lvert\hat{m}_n(t)\rvert
		\leq&e^{-2\hat{a}nt}\left((\gamma n)^n+n\int_0^te^{2\hat{a}ns}\left(2\lvert\hat{b}\rvert\lvert\hat{m}_{n-1}(s)\rvert+2\lvert\hat{m}_{n-2}(s)\rvert+\sum_{k=1}^{n-1}\lvert\hat{m}_k(s)\rvert\lvert\hat{m}_{n-k}(s)\rvert\right.\right.\\
		&\left.\left.+\sum_{k=1}^{n-3}\lvert\hat{m}_k(s)\rvert\lvert\hat{m}_{n-2-k}(s)\rvert\right)ds\right)\\
		\leq&e^{-2\hat{a}nt}\left((\gamma n)^n+n\int_0^te^{n(c_2+2\hat{a})s}c_1^n\left(2\lvert\hat{b}\rvert(n-1)^{n-1}+2(n-2)^{n-2}+\sum_{k=1}^{n-1}k^k(n-k)^{n-k}\right.\right.\\
		&\left.\left.+\sum_{k=1}^{n-3}k^k(n-2-k)^{n-2-k}\right)ds\right)\\
		\leq&e^{-2\hat{a}nt}\left((\gamma n)^n+e^{n(c_2+2\hat{a})t}\frac{1}{c_2+2\hat{a}}c_1^n\left(2\lvert\hat{b}\rvert(n-1)^{n-1}+2(n-2)^{n-2}+4(n-1)^{n-1}\right.\right.\\
		&\left.\vphantom{\frac{1}{\hat{c}_2}}\left.\vphantom{2\lvert\hat{b}\rvert(n-1)^{n-1}+2(n-2)^{n-2}+4(n-1)^{n-1}}+4(n-3)^{n-3}\right)\right)\\
		\leq&(nc_1e^{c_2t})^n\,.
	\end{split}
	\end{equation}
	3rd step: To complete the proof of the convergence of $\mu_{N,t}$ for $N\to\infty$, we note that by \eqref{eq_pf_mom_cvgc_carleman} for each $t\geq0$ the sequence $(\hat{m}_n(t))_{n\in\mathbb{N}}$ satisfies
	$$
		\sum_{n=1}^{\infty}\left(\hat{m}_{2n}(t)\right)^{-2n}=\infty\,.
	$$
	        By Carleman's condition (cf. \cite{Ak} p.85) and the moment convergence theorem,
                we thus know that there exist probability measures $\mu_t$ with  $\mu_{N,t}\xrightarrow{N\to\infty}\mu_t$ a.s for each $t\geq0$, where
                the moments of the $\mu_t$ are finite and given by $\hat{m}_n(t)$ for $n\in\mathbb{N}$.\\
	        4th step: To show that the Cauchy-transforms $G(t,z)$ of the measures $\mu_t$, $t\geq0$ are solution to \eqref{eq_HP_Cauchy_PDE} we
                use the same approximation argument as in \cite{AVW}.
	        To be more precise, one first derives a PDE for the Cauchy-transforms $G^N(t,z):=\int_{\mathbb{R}}\frac{1}{z-x}\,d\mu_{N,t}(x)$ for
                $\beta=\infty$ using \eqref{eq_pf_moment_cvg_recur_finite} and the relation
	$$
		G^N(t,z)=\sum_{k\geq0}z^{-(k+1)}S_{N,k,t}\,.
	$$
	This PDE is 
	\begin{equation*}
	\begin{split}
		&\partial_tG^N(t,z)\\
		=&-\partial_z\left(\frac{1}{N}\partial_z\left((1+z^2)G^N(t,z)\right)+2\left(-\left(\frac{a^N}{N}+1\right)z+\frac{b_N}{N}\right)G^N(t,z)+(1+z^2)\left(G^N(t,z)\right)^2\right)\,.
	\end{split}
	\end{equation*}
It is then easy to see that the coefficients of this PDE converge to the corresponding coefficients in \eqref{eq_HP_Cauchy_PDE}.
	The rigorous argument which shows that the Cauchy-transform of the weak limit of $\mu_{N,t}$ actually is a solution of \eqref{eq_HP_Cauchy_PDE} is the same as in the proof of Proposition 2.9 in \cite{VW1}.\\
	Final step: The statement about uniform compact support follows from Theorem \ref{prop_HP_moment_recur} below.
\end{proof}

We next study  some properties of the measures $(\mu_t)_{t\geq0}$ in Theorem \ref{thm_exist_limit_emp_meas}, which depend on  $\hat{a},\hat{b}\in\mathbb{R}$ only, and not  on $\beta\in[1,\infty]$.
We start with the behavior of these measures for $t\to\infty$.
For this, we need several well-known measures.

For $a,b\in\mathbb{R}$ with $a>0$ we define the {\it equilibrium Hua-Pickrell measure with parameters $a,b$}
as the probability measure on $\mathbb{R}$ with density
\begin{equation*}
	\mu_{\operatorname{HP},a,b}(dx):=
	\begin{cases}
		\frac{a\sqrt{(x-x_-)(x_+-x)}}{\pi(1+x^2)}dx\,,\quad &x\in(x_-,x_+)\,,\\
		0\,,\quad&\text{otherwise}\,
	\end{cases}
\end{equation*}
with
$$
x_{\pm}:=\frac{1}{a^2}\left(-b(a+1)
\pm\sqrt{(2a+1)(a^2+b^2)}\right)\,.
$$
It is known that the  $\mu_{\operatorname{HP},a,b}$ are the weak  limits for $N\to\infty$  of the empirical measures of the zeros of pseudo-Jacobi polynomials
(\cite{JT} Theorem 2.2) and of empirical measures of Hua-Pickrell (or Cauchy) ensembles by Section 3.2.2 of \cite{FR}.
This fits to the following result:

\begin{lemma}\label{lem_HP_limit_long_time}
  Let $\hat{a},\hat{b}$ and $(\mu_t)_{t\geq0}$ be as in Theorem \ref{thm_exist_limit_emp_meas} with $\hat{a}>0$. Then
	$$
		\lim_{t\to\infty}\mu_{t}=\mu_{\operatorname{HP},\hat{a},\hat{b}}\,
	$$
weakly.	Moreover, if $\mu_{0}=\mu_{\operatorname{HP},\hat{a},\hat{b}}$, then $\mu_{t}=\mu_{\operatorname{HP},\hat{a},\hat{b}}$ for all $t\ge0$.
\end{lemma}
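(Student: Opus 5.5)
The plan is to work entirely with the moment recursion \eqref{eq_free_HP_mom_recur} from Theorem \ref{thm_exist_limit_emp_meas}, and to identify the limit by a stationarity argument rather than by solving the Cauchy-transform PDE explicitly. The argument is in the spirit of the proof of Lemma \ref{lem_HP_pseudo_Jac}.

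First I rewrite \eqref{eq_free_HP_mom_recur} by pulling the terms $k=0$ and $k=n$ (which contribute $2m_0m_n=2m_n$) out of the first convolution sum. This gives
$$\frac{d}{dt}m_n=-2\hat a n\, m_n + n\Bigl[2\hat b m_{n-1}+\sum_{k=1}^{n-1}m_km_{n-k}+\sum_{k=0}^{n-2}m_km_{n-2-k}\Bigr],$$
which is exactly the variation-of-constants form \eqref{pf_eq_mom_cvgc_mom_expl}. Since $\hat a>0$, an induction on $n$ shows that $\lim_{t\to\infty}m_n(t)=:m_n^\infty$ exists and does not depend on the initial moments $m_n(0)$. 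Indeed, if all lower moments converge, the bracket converges to a constant $c_n$, and then
$$m_n(t)=e^{-2\hat a nt}m_n(0)+n\int_0^t e^{-2\hat a n(t-s)}(\ldots)\,ds\;\longrightarrow\; c_n/(2\hat a).$$
In particular, the limits $m_n^\infty$ are the unique solution of the stationary recursion obtained by setting $\frac{d}{dt}m_n=0$.

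It remains to show that the $m_n^\infty$ are the moments of $\mu_{\operatorname{HP},\hat a,\hat b}$, and at the same time that this measure is stationary. For this I use the frozen case $\beta=\infty$ with $a_N:=\hat aN$, $b_N:=\hat bN$. By Lemma \ref{lem_HP_pseudo_Jac}, the vector $z^N$ of ordered zeros of $P_N(\cdot;-(N+a_N),b_N)$ is a stationary solution of \eqref{eq_ODE_Hua-Pickrell2}, hence also of the time-rescaled ODE \eqref{eq_SDE_N_rescale} with $\beta=\infty$. Starting there, $\mu_{N,t}=\mu_{N,0}$ for all $t$. By \cite{JT}, Theorem 2.2, the empirical measures of these zeros converge weakly to $\mu_{\operatorname{HP},\hat a,\hat b}$, which is compactly supported, so the moment bound \eqref{eq_Carleman} holds for it.

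To apply Theorem \ref{thm_exist_limit_emp_meas}, I need convergence of all moments of $\mu_{N,0}$, not just weak convergence. I expect this to be the main obstacle. It suffices to show that the zeros $z^N$ stay in a bounded set uniformly in $N$. My first attempt would be to use the stationarity equations of Lemma \ref{lem_pseudo_Jac_char}(2) at the extreme zero $z_N^N$. All terms $(1+z_N^2)/(z_N-z_k)$ are positive there, and I would try to bound them from below by $N(1+z_N^2)/(2z_N)$ against the term $(N+a_N-1)z_N\approx(1+\hat a)Nz_N$ when $z_N$ is large; the symmetric argument handles $z_1^N$. If that fails, I would fall back on the three-term recurrence in Lemma \ref{facts-peudojacobi-trivial}(1): its coefficients, after rescaling, are bounded, so the Jacobi matrix has bounded norm and hence all zeros are bounded. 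Granted this, Theorem \ref{thm_exist_limit_emp_meas} yields $\mu_t=\lim_N\mu_{N,t}=\mu_{\operatorname{HP},\hat a,\hat b}$ for all $t$. Since the limit $\mu_t$ depends only on $\mu$ (through the recursion), this proves the second assertion. It also shows that the moments of $\mu_{\operatorname{HP},\hat a,\hat b}$ are constant in time, so they solve the stationary recursion and therefore equal the $m_n^\infty$.

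For general initial data $\mu$, the moments of $\mu_t$ thus converge to those of $\mu_{\operatorname{HP},\hat a,\hat b}$. Because this measure has compact support, it is determined by its moments, and the moment convergence theorem gives $\mu_t\to\mu_{\operatorname{HP},\hat a,\hat b}$ weakly. As a consistency check, one can verify that the Cauchy transform of $\mu_{\operatorname{HP},\hat a,\hat b}$ satisfies
$$\bigl(-2(\hat a+1)z+2\hat b\bigr)G+(z^2+1)G^2=-(2\hat a+1).$$
This is the stationary form of \eqref{eq_HP_Cauchy_PDE}, and the constant is fixed by $G\sim 1/z$ as $z\to\infty$.
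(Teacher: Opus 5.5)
Your proof is correct, and the long-time part (variation of constants, induction on $n$, independence of the limit from $m_n(0)$, moment convergence theorem) is exactly the paper's argument. The difference is how you establish that $\mu_{\operatorname{HP},\hat a,\hat b}$ is stationary.

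The paper stays at $N=\infty$. It takes from the proof of Theorem 2.2 in \cite{JT} the quadratic equation for the Cauchy transform of $\mu_{\operatorname{HP},\hat a,\hat b}$. That equation says the bracket in \eqref{eq_HP_Cauchy_PDE} is constant in $z$, so $G_{\mu_{\operatorname{HP}}}$ is a time-independent solution. Expanding in $1/z$ and using uniqueness for \eqref{eq_free_HP_mom_recur} then gives $\mu_t=\mu_{\operatorname{HP},\hat a,\hat b}$. Your constant $-(2\hat a+1)$, forced by $G\sim 1/z$, is the correct one; the paper's display has the opposite sign, which does not affect its argument.

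You work at finite $N$ instead. The zeros of $P_N(\cdot;-(1+\hat a)N,\hat bN)$ are equilibria of the frozen system, \cite{JT} gives their weak limit, and Theorem \ref{thm_exist_limit_emp_meas} carries stationarity over to the limit. This needs only the statement of \cite{JT}, not its proof, and it ties the limit law to the stationary solutions of Section 2. The cost is an extra estimate that the paper's route avoids.

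Your first idea for that estimate does not work. The lower bound $(N-1)(1+z_N^2)/(2z_N)\approx Nz_N/2$ is smaller than $(1+\hat a)Nz_N$, so nothing is contradicted for large $z_N$.

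The fallback does work. In monic form $xp_n=p_{n+1}+B_np_n+C_np_{n-1}$, Lemma \ref{facts-peudojacobi-trivial}(1) gives
\[
B_n=-\frac{ab}{(n+a)(n+a+1)},\qquad C_n=-\frac{n(n+2a)\bigl((n+a)^2+b^2\bigr)}{(2n+2a-1)(2n+2a+1)(n+a)^2}.
\]
Take $a=-(1+\hat a)N$, $b=\hat bN$ and $n\le N-1$. Then each of $|n+a|$, $|n+a+1|$ and $|2n+2a\pm1|/2$ is at least $\hat aN$, so $B_n,C_n=O(1)$ uniformly in $N$. Gershgorin applied to the $N\times N$ tridiagonal matrix whose characteristic polynomial is $p_N$ then bounds all zeros uniformly.

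You could even skip the bound. Set $dS_{N,n}=0$ in \eqref{eq_pf_moment_cvg_recur_finite} with $\beta=\infty$. The coefficient of $S_{N,n}$ tends to $-2n\hat a\neq0$, so by induction the empirical moments of the zeros converge. Bounded moments of all orders together with weak convergence then give moment convergence.
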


\begin{proof}
	If $\mu_0=\mu_{\operatorname{HP},\hat{a},\hat{b}}$, then by the proof of Theorem 2.2 in \cite{JT} its Cauchy transform $G_{\mu_{\operatorname{HP},\hat{a},\hat{b}}}:=\int_{\mathbb{R}}\frac{1}{z-x}\,d\mu_{\operatorname{HP},\hat{a},\hat{b}}(x)$ satisfies
	$$
	(z^2+1)G_{\mu_{\operatorname{HP},\hat{a},\hat{b}}}^2(z)-2((\hat{a}+1)z-\hat{b})G_{\mu_{\operatorname{HP},\hat{a},\hat{b}}}(z)-(2\hat{a} + 1) = 0\,.
	$$
	Hence, $(t,z)\mapsto G_{\mu}(z)$ is a solution to the PDE \eqref{eq_HP_Cauchy_PDE}.
	On the other hand, by Theorem \ref{thm_exist_limit_emp_meas}, $G(t,z):=\int_{\mathbb{R}}\frac{1}{z-x}\,d\mu_t(x)$ also is solution of this PDE,
        and $(\mu_t)_{t\geq0}$ is locally uniformly compactly supported.
	In particular, the relation $\int_{\mathbb{R}}\frac{1}{z-x}\,d\nu(x)=\sum_{k=0}^{\infty}z^{-(k+1)}\int_{\mathbb{R}}x^k\,d\nu(x)$ between Cauchy-transform and moments of compactly supported measures as well as the uniqueness to solutions of the ODEs \eqref{eq_free_HP_mom_recur} imply that $G(t,\cdot)=G_{\mu_{\operatorname{HP},\hat{a},\hat{b}}}(\cdot)$ for all $t\geq0$.\\
	It remains to discuss the weak convergence of $\mu_{t}$ for $t\to\infty$ for arbitrary starting conditions.
	Denote the moments of $\mu_t$ by $m_n(t):=\int_{\mathbb{R}}x^n\,d\mu_t(x)$.
	Then by \eqref{eq_free_HP_mom_recur},
	\begin{equation*}
		\begin{split}
			m_n(t)
			=&e^{-2n\hat{a}t}\left(m_n(0)+\int_0^te^{2n\hat{a}s}\left(2\hat{b}m_{n-1}(s)+\sum_{k=1}^{n-1}m_{k}(s)m_{n-k}(s)+\sum_{k=0}^{n-2}m_{k}(s)m_{n-k}(s)\right)ds\right)\,.
		\end{split}
	\end{equation*}
	Induction on $n$ now shows that $\lim_{t\to\infty}m_n(t)$ exists and that this limit does not depend on $m_n(0)$.
	Hence, the limit is the $n$-th moment of $\mu_{\operatorname{HP},\hat{a},\hat{b}}$ which is the constant solution by the first part of the proof.
	The claim follows from the moment convergence theorem.
\end{proof}

For certain parameters $\hat{a},\hat{b}$ it is possible to identify the $\mu_t$
in Theorem \ref{thm_exist_limit_emp_meas} for general starting conditions via an explicit transform of the {\it free positive multiplicative Brownian motion}
studied in \cite{Z} and references therein.
To state this result, we denote by $\mu_{\operatorname{sc},R}\in M^1(\mathbb R)$ the semicircle distributions with radius $R>0$, which have
 densities
$$\frac{2}{\pi R^2}\sqrt{R^2-x^2} {\bf 1}_{[-R,R]}.$$
 Furthermore, for $a<b$ let $\operatorname{Unif}_{[a,b]}$ the uniform distribution on $[a,b]$ with density $\frac{1}{b-a}{\bf 1}_{[a,b]}$.

\begin{theorem}
  Let $(\mu_t)_{t\geq0}$ be the  probability measures from Theorem \ref{thm_exist_limit_emp_meas} such that
 $\hat{a}=-1$ and $\hat{b}=0$ holds,  and such that the starting distribution $\mu$ is even.
  Let $\nu\in M^1((0,\infty))$ be the unique  measure
which is invariant under the pushforward by $x\mapsto x^{-1}$, and which satisifies $f(\nu)=\mu$ with
	$$
		f(x):=\frac{x^{1/2}-x^{-1/2}}{2}\,.
	$$
	Then
	$$
	\mu_t
	=f\left(\nu\boxtimes\exp\left(\mu_{\operatorname{sc},4\sqrt{2t}}\boxplus\operatorname{Unif}_{[-4t,4t]}\right)\right)\;\;\text{for all}\;\;t\geq0\,,
	$$
	where $f(\cdot),\exp(\cdot)$ denote the pushforward measures under  $f,\exp$,  and where $\boxtimes,\boxplus$ are
        the usual free additive and multiplicative convolutions (see \cite{MS,BV}).
\end{theorem}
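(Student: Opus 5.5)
The plan is to reduce the statement to a known free-probability limit for frozen Heckman-Opdam processes of type A, using the transformation of Lemma \ref{symmetry-heckman-a}.

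\textbf{Reduction to $\beta=\infty$ and symmetric initial data.} By Theorem \ref{thm_exist_limit_emp_meas}, the limits $\mu_t$ are determined by their moments $\hat m_n(t)$. These solve the recursive ODEs \eqref{eq_free_HP_mom_recur}, which involve only $\hat a,\hat b$ and $\hat m_n(0)$, and not $\beta$ or the particular initial configurations. We may therefore take $\beta=\infty$ and $a_N:=-N+\frac12$, $b_N:=0$, so that $a_N/N\to\hat a=-1$ and $\hat b=0$. Since $\mu$ is even, we choose symmetric starting vectors $\hat x_0^N\in C_N^A$ with $\hat x^N_{N+1-j,0}=-\hat x^N_{j,0}$ whose empirical moments converge to those of $\mu$; for instance, symmetrized quantiles of $\mu$. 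Because $f:(0,\infty)\to\mathbb R$ is an increasing bijection with $f(1/y)=-f(y)$, evenness of $\mu$ is equivalent to inversion invariance of $\nu:=f^{-1}(\mu)$, which gives existence and uniqueness of $\nu$. We then put $y^N_{j,0}:=f^{-1}(\hat x^N_{j,0})$, which satisfies $y_{j,0}=y_{N+1-j,0}^{-1}$.

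\textbf{Transfer via Lemma \ref{symmetry-heckman-a}.} With the time scaling $t\mapsto t/N$ of \eqref{eq_SDE_N_rescale}, Lemma \ref{symmetry-heckman-a}(2) gives
\[
X^N_{j,t}=f\bigl(y^N_{j,4t/N}\bigr),
\]
where $y^N$ solves \eqref{eq_froz_mult_BM_ODE}. Hence $\mu_{N,t}$ is the $f$-pushforward of the empirical measure of $y^N_{4t/N}$. The process $y^N_{s/N}$ is the frozen multiplicative (Heckman-Opdam type A) particle system in the natural $N\to\infty$ scaling. For this system the weak limit of the empirical measures is known from \cite{AV, Au1, Au2, Ho}: it is $\nu\boxtimes\exp(\mu_{\operatorname{sc},R(s)}\boxplus \operatorname{Unif}_{[-s,s]})$, the free positive multiplicative Brownian motion of \cite{Z} with a uniform drift part coming from the $\coth$ interaction. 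Substituting $s=4t$ and checking the constants should give $R=4\sqrt{2t}$ and $\operatorname{Unif}_{[-4t,4t]}$. Continuity of $f$ then transfers the weak convergence, and uniqueness of the limit in Theorem \ref{thm_exist_limit_emp_meas} gives the formula for all admissible starting data.

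\textbf{Main obstacle.} The hard step is the precise identification of the Heckman-Opdam limit together with the time and radius constants. If I cannot cite it in exactly this form, I would verify it directly. For this I would compute the limiting equation for the Cauchy transform (or the $S$-transform in log coordinates) of the empirical measures of $y^N$, starting from \eqref{eq_froz_mult_BM_ODE}. I would then check that $\nu\boxtimes\exp(\mu_{\operatorname{sc},4\sqrt{2s}}\boxplus\operatorname{Unif}_{[-4s,4s]})$ solves it, using the known $S$-transform of $\exp(\mu_{\operatorname{sc}}\boxplus\operatorname{Unif})$ from \cite{Z} and multiplicativity of $S$ under $\boxtimes$. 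Equivalently, I would push forward under $f$ and check against the PDE \eqref{eq_HP_Cauchy_PDE} with $\hat a=-1$, $\hat b=0$, which here reads
\[
\partial_t G=-\partial_z\bigl[(z^2+1)G^2\bigr].
\]
A further technical issue is that $\nu$, being supported on $(0,\infty)$ with exponential tails, may not satisfy a Carleman condition. So uniqueness should be argued through analytic transforms or through approximation by compactly supported $\mu$, using the bounded-support part of Theorem \ref{thm_exist_limit_emp_meas}, rather than through moments of $\nu$.
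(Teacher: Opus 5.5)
Your proposal is essentially the paper's proof. The paper also takes $\beta=\infty$, $a_N=-N+\frac12$, $b_N=0$ with inversion-symmetric initial data, and writes the frozen Hua-Pickrell system as $f(y^N_{4t/N})$ via Lemma \ref{symmetry-heckman-a}. It then quotes Theorems 3.1--3.2 of \cite{AV}, which after the rescaling by $D_2$ give the limit $\nu\boxtimes\exp(\mu_{\operatorname{sc},2\sqrt{2s}}\boxplus\operatorname{Unif}_{[-s,s]})$ for $y^N_{s/N}$, so $s=4t$ yields exactly your predicted $R=4\sqrt{2t}$.

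The paper does not address your determinacy concern. In any case $\nu$ is determined by its moments among measures on $(0,\infty)$: $\int y^n\,d\nu\le\int(2|x|+1)^{2n}\,d\mu$ grows at most like $C^n n^{2n}$, so the Stieltjes form of Carleman's condition holds.
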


\begin{proof}
%
	Let $\nu,\mu$ be probability measures as in the assumption.
	Let $\hat{y}_0^N=(\hat{y}_{1,0}^N,\dots,\hat{y}_{N,0}^N)\in C_N^{A}\cap(0,\infty)^N$, $N\in\mathbb{N}$, be a sequence of vectors such that
	\begin{enumerate}
		\item[(i)] $y_{j,0}^N=(y_{N+1-j,0}^N)^{-1}$, $j\in\{1,\dots,N\}$, and
		\item[(ii)] the empirical measures $\frac{1}{N}\sum_{j=1}^N\delta_{\hat{y}_{j,0}}^N$ converge for $N\to\infty$ in moments to $\nu$.
	\end{enumerate}
	Let $\left(y_t^N=(y_{1,t}^N,\dots,y_{N,t}^N)\right)_{t\geq0}$ be solutions of \eqref{eq_froz_mult_BM_ODE}
with start in $y_0^N=\hat{y}_0^N$ for 
        $N\in\mathbb{N}$.
	Then by Theorems 3.1 and 3.2 in \cite{AV},
	\begin{equation}\label{eq_pf_Thm_2_emp_meas_limit_1}
		\frac{1}{N}\sum_{j=1}^N\delta_{y_{j,t/N}^N}\xrightarrow{N\to\infty}\nu\boxtimes\exp\left(\mu_{\operatorname{sc},2\sqrt{t/2}}\boxplus\operatorname{Unif}_{[-t/2,t/2]}\right)
	\end{equation}
	where we used that the pushforward under the linear map $D_{a}(x):=ax$ satisfies
	$$
		D_2(\mu_{\operatorname{sc},2\sqrt{t/2}}\boxplus\operatorname{Unif}_{[-t/2,t/2]})
		=D_2(\mu_{\operatorname{sc},2\sqrt{t/2}})\boxplus D_2(\operatorname{Unif}_{[-t/2,t/2]})
		=\mu_{\operatorname{sc},2\sqrt{2t}}\boxplus\operatorname{Unif}_{[-t,t]}\,.
	$$
	On the other hand, we know by Lemma \ref{symmetry-heckman-a} that
	\begin{equation}\label{eq_pf_Thm_2_emp_meas_limit_2}
		x_t^N:=(f(y_{1,4t}^N),\dots,f(y_{N,4t}^N))\,,\,\,t\geq0\,,
	\end{equation}
	is a frozen Hua-Pickrell diffusion with parameters $a_N=-N+\frac{1}{2}$, $b_N=0$ and start in $\hat{x}_0^N:=(f(\hat{y}_{1,0}^N),\dots,f(\hat{y}_{N,0}^N))$.
	In particular,  by Theorem 3.1,  the corresponding empirical measures satisfy
	$$
		\frac{1}{N}\sum_{j=1}^N\delta_{x_{j,t/N}^N}\xrightarrow{N\to\infty}\mu_t\;\;\text{for all}\;\;t\geq0\,.
	$$
	This, \eqref{eq_pf_Thm_2_emp_meas_limit_1}, and \eqref{eq_pf_Thm_2_emp_meas_limit_2} now yield the claim.
\end{proof}

We next describe the connection of the measures $(\mu_t)_{t\geq0}$  in Theorem \ref{thm_exist_limit_emp_meas} with free It\^{o} processes.
These processes are $N=\infty$-analogues of It\^{o} processes with values in the space of square matrices of size $N$.
For an in depth introduction to these processes as well as the corresponding It\^{o} calculus see \cite{N}.

\begin{definition}
	Let $(\mathcal{A}, (\mathcal{A}_t)_{t\geq0},\tau)$ be a filtered $W^{*}$-probability space.
	Let $(c_t)_{t\geq0}$ be a circular Brownian motion on $(\mathcal{A}, (\mathcal{A}_t)_{t\geq0},\tau)$ (cf. \cite{N}).
	Let $\hat{y}_0$ be some self-adjoint bounded operator freely independent of $(c_t)_{t\geq0}$.
	Denote the identity operator in $\mathcal{A}$ by $\mathbf{1}$.
	We call a solution to the free stochastic differential equation
	\begin{equation}\label{eq_fSDE_HP}
		dy_t
		=dc_t\,\sqrt{\mathbf{1}+y^2_t}+\sqrt{\mathbf{1}+y^2_t}\,dc^*_t+\left[-(1+2a)y_t+(2b+\tau(y_t))\mathbf{1}\right]dt\,,\;y_0=\hat{y}_0\,,
	\end{equation}
	{\it free Hua-Pickrell process with parameter $a,b$ driven by $(c_t)_{t\geq0}$}; see Section 7 of \cite{As2} for the corresponding matrix process.
\end{definition}

\begin{lemma}
	Let $a,b\in\mathbb{R}$, let $(c_t)_{t\geq0}$ be a circular Brownian motion on some filtered $W^{*}$-probability space $(\mathcal{A}, (\mathcal{A}_t)_{t\geq0},\tau)$, and let $\hat{y}_0$ be a self-adjoint bounded operator free from $(c_t)_{t\geq0}$.
	Then the free stochastic differential equation \eqref{eq_fSDE_HP} has a unique solution.
\end{lemma}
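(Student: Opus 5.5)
We need existence and uniqueness for the free SDE \eqref{eq_fSDE_HP}. The plan is to treat it as a free SDE with Lipschitz coefficients in operator norm. For self-adjoint $y$, the map $y\mapsto\sqrt{\mathbf{1}+y^2}$ is defined by functional calculus, and $\mathbf{1}+y^2\ge\mathbf{1}$ is bounded below. The drift $y\mapsto -(1+2a)y+(2b+\tau(y))\mathbf{1}$ is affine, and it is Lipschitz in operator norm because $|\tau(y)-\tau(y')|\le\|y-y'\|$. The diffusion coefficient $h(y):=\sqrt{\mathbf{1}+y^2}=g(y)$ with $g(x)=\sqrt{1+x^2}$ is the delicate point. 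A function that is Lipschitz on $\mathbb R$ need not be operator Lipschitz. However, $g$ extends holomorphically to the strip $|\Im x|<1$ and has bounded derivative there, and its Fourier transform behaves well after subtracting the linear growth. Concretely, $g(x)-|x|$ type decompositions are not smooth, so instead I would write $g'(x)=x/\sqrt{1+x^2}$. This $g'$ is bounded and has integrable Fourier transform of $g''=(1+x^2)^{-3/2}$, so $\int|\xi||\widehat{g'}(\xi)|\,d\xi<\infty$ holds in a suitable sense. Then the standard criterion (functions whose derivative is the Fourier transform of a finite measure are operator Lipschitz) gives $\|g(y)-g(y')\|\le C\|y-y'\|$ for all bounded self-adjoint $y,y'$. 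This is the step I expect to be the main technical obstacle; if the Fourier criterion proves awkward, I would use the integral representation $\sqrt{1+x^2}=\frac{1}{\pi}\int_0^\infty \frac{1+x^2}{s^{1/2}(s+1+x^2)}\,ds$ and resolvent identities to get the operator Lipschitz bound directly.

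With Lipschitz coefficients in hand, I would run a Picard iteration in the Banach space of adapted, norm-continuous processes on $[0,T]$ with values in $\mathcal A$, equipped with the sup of operator norms. The key estimate is the free Burkholder–Gundy inequality of Biane–Speicher (cf.\ \cite{N}): for adapted $u$, $\|\int_0^t dc_s\,u_s\|\le 2\sqrt2\,(\int_0^t\|u_s\|^2ds)^{1/2}$, and similarly for $\int u_s\,dc_s^*$. Combined with the Lipschitz bounds, this gives
$$\sup_{s\le t}\|y^{(n+1)}_s-y^{(n)}_s\|^2\le C_T\int_0^t\sup_{r\le s}\|y^{(n)}_r-y^{(n-1)}_r\|^2\,dr,$$
which yields convergence of the iteration and hence existence.

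For uniqueness, I would apply the same inequality to the difference of two solutions and conclude with Gronwall. The solution stays self-adjoint because the equation is invariant under taking adjoints: the adjoint of $dc_t\,h(y_t)$ is $h(y_t)\,dc_t^*$, and $\tau(y_t)$ is real for self-adjoint $y_t$. Thus the iteration preserves self-adjointness, and the functional calculus stays legitimate. Freeness of $\hat y_0$ from $(c_t)$ is only needed so that $\hat y_0\in\mathcal A_0$ and the stochastic integrals are well defined. Since $T$ is arbitrary, the solutions patch together to a unique global solution.
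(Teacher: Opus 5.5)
\textbf{There is a genuine gap: a global operator Lipschitz bound for $\sqrt{\mathbf 1+y^2}$ is false.} Your argument rests on the claim that $g(x)=\sqrt{1+x^2}$ satisfies $\|g(y)-g(y')\|\le C\|y-y'\|$ for all bounded self-adjoint $y,y'$. Without it, the Picard iteration and the Gronwall step have no valid constant as written.

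Suppose such a $C$ existed. Applying the bound to $y/\varepsilon,y'/\varepsilon$ shows that $g_\varepsilon(x):=\varepsilon g(x/\varepsilon)=\sqrt{\varepsilon^2+x^2}$ satisfies it with the same $C$. Since $\sup_x\lvert g_\varepsilon(x)-\lvert x\rvert\rvert\le\varepsilon$, letting $\varepsilon\to0$ gives $\|\,\lvert y\rvert-\lvert y'\rvert\,\|\le C\|y-y'\|$. But $\lvert x\rvert$ is the classical Lipschitz function that is not operator Lipschitz (Kato, McIntosh). Its Lipschitz ratios on $n\times n$ self-adjoint matrices are unbounded as $n\to\infty$. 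Moreover, $M_n(\mathbb C)$ embeds unitally into the II$_1$ factor generated by $(c_t)_{t\ge0}$, so the failure also happens inside $\mathcal A$.

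Your Fourier computation proves something else. The bound $\int\lvert\xi\rvert\,\lvert\widehat{g'}(\xi)\rvert\,d\xi=\int\lvert\widehat{g''}\rvert<\infty$ is a sufficient condition for $g'$, not $g$, to be operator Lipschitz. For $g$ you would need $g'$ itself to be the Fourier transform of a finite measure. That fails: $g'(x)\to\pm1$ as $x\to\pm\infty$, which gives $\widehat{g'}$ the same non-integrable $1/\xi$ singularity at $0$ as the sign function. The resolvent representation cannot give a global bound either. What it does give is the local estimate $\|g(y)-g(y')\|\le\frac12\|y^2-y'^2\|\le\frac12(\|y\|+\|y'\|)\|y-y'\|$.

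\textbf{What is missing is a localization via an a priori norm bound.} This is what the paper supplies. From $\mathbf 1+y^2\le(\mathbf 1+\lvert y\rvert)^2$ and operator monotonicity of the square root one gets $\sqrt{\mathbf 1+y^2}\le\mathbf 1+\lvert y\rvert$, hence the linear growth bound $\|\sqrt{\mathbf 1+y^2}\|^2\le2(1+\|y\|^2)$. The paper combines this with local operator Lipschitz continuity ($C^2$ functions are locally operator Lipschitz) and with $\lvert\tau(A)\rvert\le\|A\|$ for the term $\tau(y)\mathbf 1$. The equation then falls under Proposition A.1 of \cite{CD}, an existence and uniqueness theorem for free SDEs with locally operator Lipschitz coefficients of linear growth.

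To keep your self-contained Picard route, add exactly this:
\begin{enumerate}
\item[\rm{(i)}] The growth bound, the free Burkholder--Gundy inequality and Gronwall give $\sup_{t\le T}\|y_t\|\le R$, with $R$ depending only on $T,a,b,\|\hat y_0\|$. Operator norms are deterministic, so no stopping times are needed.
\item[\rm{(ii)}] Replace $g$ by $g_R=g\circ\phi_R$, where $\phi_R$ is smooth, equal to the identity on $[-R,R]$, constant outside a compact set, and satisfies $\lvert\phi_R(x)\rvert\le\lvert x\rvert$. Then $g_R'\in C_c^\infty$, so $g_R$ is globally operator Lipschitz and obeys the same growth bound.
\item[\rm{(iii)}] Solve the truncated equation by your Picard scheme. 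Its solution obeys the same norm bound, hence solves the original equation.
\item[\rm{(iv)}] Uniqueness follows the same way: any solution of the original equation satisfies the norm bound, so it solves the truncated equation and coincides with the Picard solution.
\end{enumerate}
Your treatment of the drift, of self-adjointness and of the role of freeness is fine.
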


\begin{proof}
	This essentially follows from Proposition A.1 in \cite{CD}.
	Here, we use that $x\mapsto\sqrt{1+x^2}$ and affine linear maps are in $C^2(\mathbb{R})$ and hence locally operator Lipschitz (see \cite{BS2} Section 2.3).
	Furthermore, for self-adjoint $x\in\mathcal{A}$ we have $\mathbf{1}+x^2\leq(\mathbf{1}+\lvert x\rvert)^2$.
	Hence, by Theorem 3.1.12 of \cite{St},  $\sqrt{\mathbf{1}+x^2}\leq\mathbf{1}+\lvert x\rvert$.
	This in turn implies the growth bound $\lvert\lvert\sqrt{\mathbf{1}+x^2}\rvert\rvert^2\leq2(1+\lvert\lvert x\rvert\rvert^2)$.
	Finally, note that the map $\mathcal{A}\to\mathcal{A}$, $A\to\tau(A)\mathbf{1}$ does not match the assumption of the cited Proposition in the strict sense.
	However, it is clear from the corresponding proof that it suffices to show
	$$
		\lvert\lvert\tau(A)\mathbf{1}-\tau(B)\mathbf{1}\rvert\rvert\leq\lvert\lvert A-B\rvert\rvert
		\;\;\text{and}\;\;
		\lvert\lvert\tau(A)\mathbf{1}\rvert\rvert^2\leq1+\lvert\lvert A\rvert\rvert^2\;\;\text{for all}\;\;A,B\in\mathcal{A}\,.
	$$
	These inequalities directly follow from the  general von Neumann algebra estimate for tracial states $\lvert\tau(A)\rvert\leq\lvert\lvert A\rvert\rvert$ for  $A\in\mathcal{A}$; see e.g. Lemma 2.1.3 of \cite{dS}.
\end{proof}

\begin{proposition}\label{prop_HP_moment_recur}
	Let $\hat{a},\hat{b}\in\mathbb{R}$.
	Let $(y_t)_{t\geq0}$ be a free Hua-Pickrell process with parameters $\hat{a},\hat{b}\in\mathbb{R}$ on some filtered $W^{*}$-probability space $(\mathcal{A}, (\mathcal{A}_t)_{t\geq0},\tau)$.
	Then the moments $m_n(t):=\tau(y^n(t))$, $n\in\mathbb{N}_0$ satisfy the ODEs \eqref{eq_free_HP_mom_recur}.\\
	Let $(\mu_t)_{t\geq0}$ be the family of probability measures in Theorem \ref{thm_exist_limit_emp_meas} for the parameters $\hat{a},\hat{b}$ and in the case that $\mu_0$ equals the spectral distribution of $y_0$.
	Then the spectral distribution of $y_t$ coincides with the measure $\mu_t$ for all $t\geq0$.
\end{proposition}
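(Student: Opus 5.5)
The plan is to compute the moments $m_n(t)=\tau(y_t^n)$ with the free It\^o calculus for circular Brownian motion (see \cite{N}). Write $\sigma_t:=\sqrt{\mathbf 1+y_t^2}$. The Leibniz-type free It\^o formula gives
$$d(y_t^n)=\sum_{i+1+m=n} y_t^i\,dy_t\,y_t^m+\sum_{i+l+m=n-2} y_t^i\,dy_t\,y_t^l\,dy_t\,y_t^m .$$
Three rules will be used.
\begin{enumerate}
\item[(a)] $\tau$ annihilates the martingale parts $y^i\,dc_t\,\sigma_t\,y^m$ and $y^i\,\sigma_t\,dc_t^*\,y^m$.
\item[(b)] $dc_t\,A\,dc_t=dc_t^*\,A\,dc_t^*=0$.
\item[(c)] $dc_t\,A\,dc_t^*=dc_t^*\,A\,dc_t=\tau(A)\mathbf 1\,dt$ for adapted $A$.
\end{enumerate}

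\textbf{First-order terms.} After taking traces and using cyclicity, these contribute
$$n\bigl[-(1+2\hat a)m_n+(2\hat b+m_1)m_{n-1}\bigr]dt .$$

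\textbf{Second-order terms.} In the double sum, only the cross pairs $(dc\,\sigma)\,y^l\,(\sigma\,dc^*)$ and $(\sigma\,dc^*)\,y^l\,(dc\,\sigma)$ survive. By (c), each cross pair yields $\tau(\sigma y^l\sigma)\,\tau(y^{i+m})=\tau\bigl((\mathbf 1+y^2)y^l\bigr)\,\tau(y^{i+m})$ by traciality, and there are two cross pairs per position $(i,l,m)$. Summing over $i+l+m=n-2$, the number of pairs $(i,m)$ with $i+m=n-2-l$ is $n-1-l$. Symmetrizing in the two trace factors then turns the sum into
$$n\Bigl[\sum_{k=0}^{n-2}m_km_{n-2-k}+\sum_{k}m_k m_{n-k}\Bigr],$$
where the second sum still lacks its boundary terms. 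The $m_1m_{n-1}$ contribution from the drift and the $-m_n$ shift in the linear coefficient supply exactly the missing boundary terms. Together these produce the terms $k=0,n$ of $\sum_{k=0}^n m_km_{n-k}$ and the coefficient $-2(\hat a+1)$, which gives \eqref{eq_free_HP_mom_recur}.

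I expect the bookkeeping of these boundary and symmetrization terms to be the main obstacle. I would check it first on $n=1,2$ and then carry out the general counting. Existence of the moments and their differentiability are unproblematic, since $y_t$ is bounded and the solution is norm-continuous (previous lemma).

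\textbf{Identification of the spectral distribution.} The system \eqref{eq_free_HP_mom_recur} is triangular. For fixed $m_0,\dots,m_{n-1}$, the equation for $m_n$ is a linear scalar ODE in $m_n$, so by induction on $n$ the moment functions are uniquely determined by the initial moments. By Theorem \ref{thm_exist_limit_emp_meas}, the moments of $\mu_t$ solve the same system with the same initial data, namely the moments of $\mu_0$, which equals the spectral distribution of $y_0$. Hence $\tau(y_t^n)=\int x^n\,d\mu_t$ for all $n$ and $t$. The spectral measure of the bounded self-adjoint $y_t$ is compactly supported and therefore determined by its moments, and $\mu_t$ satisfies Carleman's condition by step 2 of that proof. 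So the two measures coincide.

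As a byproduct, $\operatorname{supp}\mu_t\subset[-\|y_t\|,\|y_t\|]$, and $\sup_{t\le T}\|y_t\|<\infty$ by continuity of the solution. This yields the uniform boundedness of supports claimed in the final step of Theorem \ref{thm_exist_limit_emp_meas}. If $\mu$ is compactly supported, one realizes it as the spectral distribution of some bounded $\hat y_0$ free from $(c_t)_{t\geq0}$.
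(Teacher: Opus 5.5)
Your overall route is the paper's: free It\^o calculus for $\tau(y_t^n)$, then absorbing boundary terms, then uniqueness for the triangular moment system plus moment determinacy. Your first-order terms and the identification step are fine. The gap is in the second-order term, which is the real content of the first claim. You assert that each cross pair yields $\tau(\sigma y^l\sigma)\tau(y^{i+m})$, but this is false for the pair $(\sigma\,dc^*)\,y^l\,(dc\,\sigma)$. By your own rule (c),
\[
y^i\,\sigma\,dc_t^*\,y^l\,dc_t\,\sigma\,y^m=\tau(y^l)\,y^i(\mathbf 1+y^2)y^m\,dt ,
\]
whose trace is $\tau(y^l)\,\tau\bigl((\mathbf 1+y^2)y^{i+m}\bigr)=m_l(m_{i+m}+m_{i+m+2})$, not $(m_l+m_{l+2})m_{i+m}$. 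The factor $\mathbf 1+y^2$ lands in the outer trace, not the inner one.

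With your version, the second-order term would be $2\sum_{k=0}^{n-2}(k+1)m_k(m_{n-2-k}+m_{n-k})$. No symmetrization turns its $m_km_{n-k}$ part into $n\sum_{k=0}^{n-2}m_km_{n-k}$. For $n=3$ you get $6m_1+2m_3+4m_1m_2$ instead of the correct $6m_1+3m_3+3m_1m_2$, so the resulting ODE for $m_3$ is wrong. Your planned check on $n=1,2$ would not catch this, because the discrepancy $\sum_{k=0}^{n-2}(k+1)\bigl(m_km_{n-k}-m_{k+2}m_{n-2-k}\bigr)$ vanishes for $n\le 2$.

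The correct bookkeeping puts both cross pairs in the form $\tau(y^k)\,\tau\bigl((\mathbf 1+y^2)y^{n-2-k}\bigr)$. The first pair has multiplicity $k+1$ (taking $k=i+m$), and the second has multiplicity $n-1-k$ (taking $k=l$). These multiplicities add to $n$, so the second-order term is $n\sum_{k=0}^{n-2}m_k\bigl(m_{n-k}+m_{n-2-k}\bigr)$. This is exactly what the traced It\^o formula gives in the paper's proof.

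Only then does your boundary argument go through. The identity $\sum_{k=0}^{n-2}m_km_{n-k}=\sum_{k=0}^{n}m_km_{n-k}-m_n-m_1m_{n-1}$ cancels the drift's $m_1m_{n-1}$ and turns $-(1+2\hat a)$ into $-2(\hat a+1)$, which yields \eqref{eq_free_HP_mom_recur}.
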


\begin{proof}
	By the traced It\^{o} formula (\cite{N} Theorem 3.5.3), we have
	\begin{align*}
		d\tau(y^n_t)
		=&n\left[\vphantom{\sum_k^n}-(1+2\hat{a})\tau(y_t^n)+(2\hat{b}+\tau(y_t))\tau(y_t^{n-1})\right.\\
		&\left.+\sum_{k=0}^{n-2}(\tau\otimes\tau^{op})\left(\left(\mathbf{1}\otimes\sqrt{\mathbf{1}+y_t^2}\right)\cdot\left(y_t^k\otimes y_t^{n-2-k}\right)\cdot\left(\mathbf{1}\otimes\sqrt{\mathbf{1}\otimes y_t^2}\right)\right)\right]dt\\
			=&n\left[\vphantom{\sum_k^n}-(1+\hat{a})\tau(y_t^n)+(2\hat{b}+\tau(y_t))\tau(y_t^{n-1})\right.\\
		&\left.+\sum_{k=0}^{n-2}\tau(y_t^k)\left(\tau(y_t^{n-k})+\tau(y_t^{n-2-k})\right)\right]dt\,.
	\end{align*}
	The first claim follows from
	$$
		\sum_{k=0}^{n-2}\tau(y_t^k)\tau(y_t^{n-k})=\sum_{k=0}^n\tau(y_t^k)\tau(y_t^{n-k})-\tau(y_t^n)-\tau(y_t)\tau(y_t^{n-1})\,.
	$$
	The second claim is immediate as compactly supported measures are determined by their moments and solutions to the ODEs \eqref{eq_free_HP_mom_recur} with matching initial conditions are unique.
\end{proof}

Lastly, we construct free Hua-Pickrell processes from more basic free stochastic processes.
This construction should be seen as the free probability analogue of Proposition 2.3 in \cite{As1}, which states that the solution of the matrix version of equation \eqref{eq_fSDE_HP} can be constructed from matrix geometric Brownian motions.
As It\^{o} formulas for matrix-valued stochastic processes and for free processes are structurally the same (see e.g. \cite{BS1,N}), it is not surprising that the referenced Proposition carries over to the free probability setting.

\begin{theorem}\label{thm_free_HP_process}
	Let $a,b\in\mathbb{R}$.
	Let $(\mathcal{A}, (\mathcal{A}_t)_{t\geq0},\tau)$ be a $W^{*}$-probability space.
	Let $(c_t=(c_{1,t},c_{2,t}))_{t\geq0}$ be a two-dimensional circular Brownian motion.
	Let $\hat{y}_0\in\mathcal{A}$ be freely independent of $(c_{t})_{t\geq0}$.
	Let $(g_t)_{t\geq0}$ be a free multiplicative Brownian motion driven by $(c_{1,t})_{t\geq0}$ with start in $\mathbf{1}$, i.e., $(g_t)_{t\geq0}$ is the unique solution to the free stochastic differential equation
	$$
		dg_t=g_t\,dc_{1,t}\,.
	$$
	Set $g_t^{(a)}:=e^{(a+1/2)t}g_t$, $t\geq0$.
	Then
	\begin{equation*}
		y_t:=\left(g_t^{(a)}\right)^{-1}\left[\hat{y}_0+\int_0^tg_s^{(a)}\,d\left(c_{2,s}+c_{2,s}^{*}+2bs\mathbf{1}\right)\,\left(g_s^{(a)}\right)^*\right]\left(g_t^{(a)}\right)^{-*}\,,\;\;t\geq0\,,
	\end{equation*}
	is a free Hua-Pickrell diffusion driven by $(\tilde{c}_t)_{t\geq0}$ with start in $\hat{y}_0$ and parameters $a,b$ where $$
		\tilde{c}_t:=-\int_0^tdc_{1,s}\,y_s(\mathbf{1}+y_s^2)^{-1/2}+\int_0^tdc_{2,s}\,(\mathbf{1}+y_s^2)^{-1/2}\,,\;\;t\geq0\,,
	$$
	is a circular Brownian motion.
\end{theorem}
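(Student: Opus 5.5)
The plan is to write $y_t$ as a product of three free Itô processes and apply the free Itô product rule (see \cite{N}, and \cite{BS1} for the matrix analogue). We then read off the drift and martingale parts and match them with \eqref{eq_fSDE_HP}. Finally we verify by a free Lévy characterization that $\tilde c$ is a circular Brownian motion.

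\emph{Setup.} Put $h_t:=g_t^{(a)}$ and
$$Z_t:=\hat y_0+\int_0^t h_s\,d\bigl(c_{2,s}+c_{2,s}^*+2bs\mathbf 1\bigr)\,h_s^*,$$
so that $y_t=h_t^{-1}Z_th_t^{-*}$. The free multiplicative Brownian motion $g_t$ is invertible for all $t$ (standard, cf.~\cite{BS1}). From $dg_t=g_t\,dc_{1,t}$ and the free Itô rules for a circular Brownian motion,
$$dc\,A\,dc=dc^*\,A\,dc^*=0,\qquad dc\,A\,dc^*=dc^*\,A\,dc=\tau(A)\,dt\,\mathbf 1,$$
we get $d(g_t^{-1})=-dc_{1,t}\,g_t^{-1}$, because the only possible correction involves $dc_1\cdots dc_1$, which vanishes. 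Consequently
$$d(h_t^{-1})=-(a+\tfrac12)h_t^{-1}dt-dc_{1,t}\,h_t^{-1},\qquad d(h_t^{-*})=-(a+\tfrac12)h_t^{-*}dt-h_t^{-*}dc_{1,t}^*.$$
In addition, $dZ_t=h_t(dc_{2,t}+dc_{2,t}^*+2b\,dt)h_t^*$.

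\emph{Product rule.} Expand $d(h^{-1}Zh^{-*})$. The first-order terms give
$$-(2a+1)y_t\,dt-dc_{1,t}\,y_t-y_t\,dc_{1,t}^*+dc_{2,t}+dc_{2,t}^*+2b\,dt\,\mathbf 1.$$
The second-order corrections are as follows.
\begin{itemize}
\item The cross term of $d(h^{-1})$ and $d(h^{-*})$ gives $dc_{1,t}\,y_t\,dc_{1,t}^*=\tau(y_t)\,dt\,\mathbf 1$.
\item Cross terms between $c_1$ and $c_2$ vanish, since $c_1$ and $c_2$ are free with zero covariation.
\item Cross terms pairing $dc_1$ with $dc_1$, or $dc_1^*$ with $dc_1^*$, vanish.
\end{itemize}
The drift is therefore $[-(1+2a)y_t+(2b+\tau(y_t))\mathbf 1]dt$, as in \eqref{eq_fSDE_HP}. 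For the martingale part, set
$$u_t:=y_t(\mathbf 1+y_t^2)^{-1/2},\qquad v_t:=(\mathbf 1+y_t^2)^{-1/2}.$$
Then $d\tilde c_t=-dc_{1,t}u_t+dc_{2,t}v_t$. Since $u_t,v_t$ are functions of $y_t$ and commute with $\sqrt{\mathbf 1+y_t^2}$, we get $d\tilde c_t\sqrt{\mathbf 1+y_t^2}=-dc_{1,t}y_t+dc_{2,t}$, and adjoining gives the remaining two terms. So $y$ solves \eqref{eq_fSDE_HP} driven by $\tilde c$.

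\emph{Circularity of $\tilde c$.} We invoke the free Lévy characterization: a continuous free martingale $m$ with $dm\,A\,dm=0$ and $dm\,A\,dm^*=dm^*\,A\,dm=\tau(A)\,dt$ for adapted $A$ is circular (see \cite{N}). Using freeness of $c_1,c_2$, for adapted $A$ we compute
$$d\tilde c\,A\,d\tilde c^*=\tau(u_tAu_t+v_tAv_t)\,dt=\tau\bigl(A(u_t^2+v_t^2)\bigr)\,dt=\tau(A)\,dt,$$
by traciality and $u_t^2+v_t^2=\mathbf 1$. Similarly, $d\tilde c^*A\,d\tilde c=(\tau(u_t^2)+\tau(v_t^2))\tau(A)\,dt=\tau(A)\,dt$, and $d\tilde c\,A\,d\tilde c=0$.

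\emph{Main obstacle.} The delicate point is justification rather than algebra: applying the free Itô formula to $h^{-1}$ and to the three-factor product, and applying the Lévy characterization, both with unbounded-in-time but locally bounded adapted integrands. I would first localize on $[0,T]$, where $g_t$, $g_t^{-1}$ and $y_t$ are norm-bounded, and then cite the corresponding statements in \cite{N}. The possibility that $\hat y_0$ is not free from $\mathcal A_0$-type issues is avoided by the freeness assumption in the statement.
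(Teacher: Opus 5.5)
Your proposal is correct and follows the paper's proof essentially step by step: you use $d(g_t^{-1})=-dc_{1,t}\,g_t^{-1}$, then the free It\^o product rule on $h_t^{-1}Z_th_t^{-*}$ (the paper applies the two-factor rule twice), whose only surviving correction is $dc_{1,t}\,y_t\,dc_{1,t}^*=\tau(y_t)\,dt$, and then a free L\'evy characterization of $\tilde c$ resting on $u_t^2+v_t^2=\mathbf 1$. Two small points to tighten. First, the characterization actually available is Theorem 6.2 of \cite{BCG}, which the paper applies to $z_1=(\tilde c+\tilde c^*)/\sqrt2$ and $z_2=(\tilde c-\tilde c^*)/(i\sqrt2)$; it also requires $\tau((z_{j,t}-z_{j,s})^4)\le K(t-s)^2$, which your bracket identities do not supply but which follows at once from the traced It\^o formula as in the paper, or from the free Burkholder--Davis--Gundy inequality since $\|u_t\|,\|v_t\|\le1$. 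Second, $d\tilde c^*A\,d\tilde c=\tau(A)(u_t^2+v_t^2)\,dt$ is an operator identity rather than $(\tau(u_t^2)+\tau(v_t^2))\tau(A)\,dt$, although here the result is the same.
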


\begin{proof}
	Note that as $dg_t=g_t\,dc_{1,t}$ and $dg_t^{-1}=-dc_{1,t}\,g_t^{-1}$ we have
	$$
		dg_t^{(a)}=g_t^{(a)}\,dc_{1,t}+\left(a+\frac{1}{2}\right)g_t^{(a)}dt\,,\;\;d\left(g_t^{(a)}\right)^{-1}=-dc_{1,t}\,\left(g_t^{(a)}\right)^{-1}-\left(a+\frac{1}{2}\right)\left(g_t^{(a)}\right)^{-1}dt\,.
	$$
	Using the free It\^{o} product rule (\cite{N} Theorem 3.2.5) twice, we get
	\begin{align*}
		dy_t
		=&-dc_{1,t}\,y_t+dc_{2,t}+dc_{2,t}^{*}-y_t\,dc_{1,t}^*+\left[-(2a+1)y_t+(2b+\tau(y_t))\mathbf{1}\right]dt\\
		=&d\tilde{c}_t\,\sqrt{\mathbf{1}+y_t^2}+\sqrt{\mathbf{1}+y_t^2}\,d\tilde{c}_t^*+\left[-(2a+1)y_t+(2b+\tau(y_t))\mathbf{1}\right]dt\,.
	\end{align*}
	We thus still have to show that $(\tilde{c}_t)_{t\geq0}$ is a circular Brownian motion.
	For this recall that $(\tilde{c}_t)_{t\geq0}$ is a circular Brownian motion if $(z_t=(z_{1,t},z_{2,t}))_{t\geq0}$ with $z_{1,t}:=(\tilde{c}_t+\tilde{c}_t^*)/\sqrt{2}$ and $z_{2,t}:=(\tilde{c}_t-\tilde{c}_t^*)/(i\sqrt{2})$, $t\geq0$, is a two-dimensional semi-circular Brownian motion.
	In turn, it suffices to verify the conditions of the free L\'{e}vy characterization Theorem (\cite{BCG} Theorem 6.2) for $(z_t)_{t\geq0}$.
	That each component of $z_t$ is a bounded operator and a free martingale with $z_{j,0}=\mathbf{0}$, $j\in\{1,2\}$, is clear as it is the sum of free stochastic integrals w.r.t.\ circular Brownian motions.
	Next, we show condition "3." in \cite{BCG} Theorem 6.2.
	Note that, as $(z_t)_{t\geq0}$ is a free martingale, it suffices to verify
	\begin{equation}\label{eq_pf_bougerol_id}
		\tau(A(z_{j,t}-z_{j,s})B(z_{k,t}-z_{k,s}))
		=\begin{cases}
			\tau(A)\tau(B)(t-s)\,,&j=k\,,\\
			0\,,&j\neq k\,,
		\end{cases}
	\end{equation}
	for all $0\leq s\leq t$, $A,B\in\mathcal{A}_s$ and $j,k\in\{1,2\}$.
	So let $A,B\in\mathcal{A}_s$, then
	\begin{equation}\label{eq_pf_bougerol_id2}
	\begin{split}
		&\tau\left(A(z_{1,t}-z_{1,s})B(z_{1,t}-z_{1,s})\right)\\
		=&\frac{1}{2}\tau\left[\left(\int_s^tA\left(-dc_{1,r}\,y_r+dc_{2,r}\right)(\mathbf{1}+y_r^2)^{-1/2}+\int_s^tA(\mathbf{1}+y_r^2)(-y_r\,dc_{1,r}^*+dc_{2,r}^*)\right)\right.\\
		&\cdot\left.\left(\int_s^tB\left(-dc_{1,r}\,y_r+dc_{2,r}\right)(\mathbf{1}+y_r^2)^{-1/2}+\int_s^tB(\mathbf{1}+y_r^2)(-y_r\,dc_{1,r}^*+dc_{2,r}^*)\right)\right]\\
		=&\frac{1}{2}\left(\int_s^t\left(\tau(A)\tau(y_r(\mathbf{1}+y_r^2)^{-1/2}B(\mathbf{1}+y_r^2)^{-1/2}y_r)+\tau(A)\tau((\mathbf{1}+y_r^2)^{-1/2}B(\mathbf{1}+y_r^2)^{-1/2})\right.\right.\\
		&\left.\left.+\tau(A(\mathbf{1}+y_r^2)^{-1/2}y_r^2(\mathbf{1}+y_r^2)^{-1/2})\tau(B)+\tau(A(\mathbf{1}+y_r^2)^{-1})\tau(B)\right)dr\vphantom{\int_s^t}\right)\\
		=&\tau(A)\tau(B)(t-s)\,.
	\end{split}
	\end{equation}
	The other cases in \eqref{eq_pf_bougerol_id} follow in the same way.
	Finally, we show condition "2." in \cite{BCG} Theorem 6, i.e., the bound $\tau((z_{j,t}-z_{j,s})^4)\leq K(t-s)^2$ for all $0\leq s\leq t$, $j\in\{1,2\}$, and some positive $K$.
	For this fix $s\geq0$ and set
	\begin{align*}
		\tilde{z}_{1,t}
		:=&\begin{rcases}
			\begin{dcases}
			0\,,&t\leq s\\
			z_{1,t}-z_{1,s}\,,&t>s
			\end{dcases}
		\end{rcases}
		=\int_0^t\mathbf{1}_{[s,\infty)}(r)dz_{1,r}\\
		=&-\frac{1}{\sqrt{2}}\int_0^t\mathbf{1}_{[s,\infty)}(r)\left( dc_{1,r}\,y_r(\mathbf{1}+y_r^2)^{-1/2}+(\mathbf{1}+y_r^2)^{-1/2}y_r\,dc_{1,r}^*\right)\\
		&+\frac{1}{\sqrt{2}}\int_0^t\mathbf{1}_{[s,\infty)}(r)\left(dc_{2,r}\,(\mathbf{1}+y_r^2)^{-1/2}+(\mathbf{1}+y_r^2)^{-1/2}\,dc_{2,r}^*\right)\,.
	\end{align*}
	Now, by the traced It\^{o} formula (\cite{N} Theorem 3.5.3), we have
	\begin{align*}
		&\tau(\tilde{z}_{1,t}^4)\\
		=&2\sum_{k=0}^2\int_0^t(\tau\otimes\tau^{\operatorname{op}})\left((-\mathbf{1}\otimes(\mathbf{1}+y_r^2)^{-1/2}y_r)\cdot\left(\tilde{z}_{1,r}^k\otimes\tilde{z}_{1,r}^{2-k}\right)\cdot(-\mathbf{1}\otimes y_r(\mathbf{1}+y_r^2)^{-1/2})\right)\\
		&\quad\quad\quad\;\cdot\mathbf{1}_{[s,\infty)}(r)\,dr\\
		&+2\sum_{k=0}^2\int_0^t(\tau\otimes\tau^{\operatorname{op}})\left((\mathbf{1}\otimes(\mathbf{1}+y_r^2)^{-1/2})\cdot\left(\tilde{z}_{1,r}^k\otimes\tilde{z}_{1,r}^{2-k}\right)\cdot(\mathbf{1}\otimes(\mathbf{1}+y_r^2)^{-1/2})\right)\mathbf{1}_{[s,\infty)}(r)\,dr\\
		=&2\sum_{k=0}^2\int_0^t\tau(\tilde{z}_{1,r}^k)\tau(\tilde{z}_{1,r}^{2-k})\mathbf{1}_{[s,\infty)}(r)\,dr\,.
	\end{align*}
	Note that the $k=1$ term vanishes as $\tau(\tilde{z}_{1,t})\equiv0$.
	Now let $t>s$.
	By setting $A=B=\mathbf{1}$, we know by \eqref{eq_pf_bougerol_id2} that $\tau(\tilde{z}_{1,t}^2)=t-s$.
	Plugging this in, we get
	\begin{equation*}
		\tau((z_{1,t}-z_{1,s})^4)
		=4\int_s^t(r-s)dr=2(t-s)^2\,.
	\end{equation*}
	Similarly, one also sees that $\tau((z_{2,t}-z_{2,s})^4)=2(t-s)^2$.
	This finishes the proof.
\end{proof}

\section{Appendix: Existence and uniqueness of solutions of the Hua-Pickrell SDEs}\label{appendix}

In this section we prove Theorem \ref{lem_HP_exist_unique} on the existence and uniqueness of solutions $(\tilde X_t)_{t\geq0}$ 
of the SDEs \eqref{eq_SDE_Hua-Pickrell-norm} for $\beta\in [1,\infty[$ and the ODEs  \eqref{eq_ODE_Hua-Pickrell} for $\beta=\infty$ respectively
    on $C_N^A$ with the same initial condition $\tilde X_0=\hat{x}_0\in C_N^A$.  We treat both cases
    simultaneously by interpreting the diffusion term as $0$ for $\beta=\infty$.
In principle, we can adapt the proof for $\beta=2$  in Lemma 4.2 of \cite{As2}.
	However, some arguments in the  proof in \cite{As2}  are imprecise, which are related to the application of
	Theorem 2.2 of \cite{GM}. In fact, it is required in the beginning of Section 2 in \cite{GM}
         that some function  $H$ is non-negative, where in our setting we have $H(x,y):=H_{ij}(x,y):=2(1+xy)$ for $x,y\in\mathbb{R}$, i.e., the condition
      in \cite{GM}  does not hold here.
In order to avoid this condition, we  discuss where this non-negativity is needed in \cite{GM}.
However, before we do so we need to state a minor correction to the formulas (4.3) and (4.6) in \cite{GM} as these are important in our argument.
We here state the necessary correction in terms of the notation of \cite{GM}.

Let $x\in\mathbb{R}^N$, and let
 $e_n(x):=\sum_{1\leq i_1<\dots<i_n\leq N}x_{i_1}\dots x_{i_n}$ ($n=1,\ldots,N$) be the  elementary symmetric polynomials in $N$ variables.
Moreover, let $e_0\equiv1$ and $e_{-1}\equiv0$, and
for distinct $j_1,\dots,j_k\in\{1,\dots,N\}$ we write
$$
e_n^{j_1,\dots,{j_k}}(x):=\sum_{1\leq i_1<\dots<i_n\leq N\colon i_l\neq j_m\;\forall\;l,m}x_{i_1}\dots x_{i_n}\,.
$$
  For $p\in\mathbb{N}$ consider a stochastic process $((\lambda_{1}(t),\dots,\lambda_{p}(t)))_{t\geq0}$ with values in the Weyl chamber $C_p^A$
  as well as
 the corresponding squared differences  $a_{i,j}(t):=(\lambda_{i}(t)-\lambda_{j}(t))^2$, $1\leq i<j\leq p$. Form the vector
$$
A(t):=(a_{1,2}(t),\dots,a_{1,p}(t),a_{2,3}(t),\dots,a_{2,p}(t),\dots,a_{p-1,p}(t))\in\mathbb{R}^{p(p-1)/2}\,,
$$
and set $V_{n}(t):=e_n(A(t))$ for  $t\geq0$ and  $n=1,\dots,p(p-1)/2$.\\
From now on we suppress the dependence of stochastic processes on the time $t$ in our notation.\\
In the setting of Section 4 in \cite{GM} the  process $V_n$ has the form $V_n=M_n+\int_0^tD_n(s)ds$ for some  martingale $M_n$ where 
$M_n,D_n$ can be expressed
 in terms of the $\lambda_{i}$ and some continuous functions $\sigma_{i}\colon\mathbb{R}\to\mathbb{R}$, $b_i\colon\mathbb{R}\to\mathbb{R}$, $H_{ij}\colon\mathbb{R}^2\to[0,\infty)$, $i,j\in\{1,\dots,p\}$, $j\neq i$; see equations (4.1) and (4.2) in \cite{GM}.
Moreover, as in \cite{GM}, we assume that  $H_{ij}(x,y)=H_{ji}(y,x)$ for all $x,y\in\mathbb{R}$, and we also
 use the abbreviations
\begin{equation*}
	i<j\;\;\mathrel{\widehat{=}}\;\;1\leq i<j\leq p\quad\text{and}\quad
	i\neq j\neq k\;\;\mathrel{\widehat{=}}\;\;1\leq i,j,k\leq p\colon i\neq j\neq k\neq i\,.
\end{equation*}
We now derive some correction of Eq. (4.3) in \cite{GM}.
As $e_{n-1}^{{ij}}(A)=a_{ik}e_{n-2}^{{ij},{ik}}(A)+e_{n-1}^{{ij},{ik}}(A)$, we have
\begin{align}\label{43lh}
	\sum_{i\neq j\neq k}(\lambda_i-\lambda_j)\frac{H_{ik}(\lambda_i,\lambda_k)}{\lambda_i-\lambda_k}e_{n-1}^{\bar{a}_{ij}}(A)
	=&\sum_{i\neq j\neq k}(\lambda_i-\lambda_j)(\lambda_i-\lambda_k)H_{ik}(\lambda_i,\lambda_k)e_{n-2}^{\bar{a}_{ij},\bar{a}_{ik}}(A)\notag\\
	&+\sum_{i\neq j\neq k}(\lambda_i-\lambda_j)\frac{H_{ik}(\lambda_i,\lambda_k)}{\lambda_i-\lambda_k}e_{n-1}^{\bar{a}_{ij},\bar{a}_{ik}}(A)\,.
\end{align}
The second term on the r.h.s. of \eqref{43lh} can be written as
\begin{align*}
	&\sum_{i\neq j\neq k}(\lambda_i-\lambda_j)\frac{H_{ik}(\lambda_i,\lambda_k)}{\lambda_i-\lambda_k}e_{n-1}^{{ij},{ik}}(A)\\
  =&\sum_{i<k}\sum_{j\neq i,j\neq k}\frac{H_{ik}(\lambda_{i},\lambda_k)}{\lambda_i-\lambda_k}\left((\lambda_i-\lambda_k+\lambda_k-\lambda_j)
  e_{n-1}^{{ij},ik}(A)-(\lambda_k-\lambda_j)e_{n-1}^{{kj},{ki}}(A)\right)\\
	=&\sum_{i<k}\sum_{j\neq i,j\neq k}H_{ik}(\lambda_{i},\lambda_k)\left(e_{n-1}^{{ij},ik}(A)+\frac{\lambda_k-\lambda_j}{\lambda_i-\lambda_k}\left(e_{n-1}^{{ij},{ik}}(A)-e_{n-1}^{{kj},{ki}}(A)\right)\right)\\
	=&\sum_{i<k}\sum_{j\neq i,j\neq k}H_{ik}(\lambda_i,\lambda_k)\left(e_{n-1}^{{ij},{ik}}(A)-\left(\lambda_k-\lambda_j\right)^2e_{n-2}^{{ij},{ik},{kj}}(A)-(\lambda_k-\lambda_j)(\lambda_i-\lambda_j)e_{n-2}^{{ij},{ik},{kj}}(A)\right)\\
	=&\sum_{i<k}\sum_{j\neq i,j\neq k}H_{ik}(\lambda_i,\lambda_k)\left(e_{n-1}^{{ij},{ik},{kj}}(A)-(\lambda_k-\lambda_j)(\lambda_i-\lambda_j)e_{n-2}^{{ij},{ik},{kj}}(A)\right)\,,
\end{align*}
where we used
$$
e_{n-1}^{{ij},{ik}}(A)-e_{n-1}^{{kj},{ki}}(A)
=(a_{kj}-a_{ij})e_{n-2}^{\bar{a}_{ij},\bar{a}_{ik},\bar{a}_{kj}}(A)
=(\lambda_k-\lambda_i)(\lambda_k-\lambda_j+\lambda_i-\lambda_j)e_{n-2}^{{ij},{ik},{kj}}(A)
$$
and
$$
e_{n-1}^{{ij},{ik}}
=a_{kj}e_{n-2}^{{ij},{ik},{kj}}(A)+e_{n-1}^{{ij},{ik},{kj}}(A)\,.
$$
We thus obtain the following identity which replaces (4.3) in \cite{GM}:
\begin{equation}\label{eq_appendix_GM_corr_4.3}\tag{4.3'}
	\begin{split}
		\sum_{i\neq j\neq k}(\lambda_i-\lambda_j)\frac{H_{ik}(\lambda_i,\lambda_k)}{\lambda_i-\lambda_k}e_{n-1}^{\bar{a}_{ij}}(A)
		=&\sum_{i\neq j\neq k}(\lambda_i-\lambda_j)(\lambda_i-\lambda_k)H_{ik}(\lambda_i,\lambda_k)e_{n-2}^{\bar{a}_{ij},\bar{a}_{ik}}(A)\\
		&+\sum_{i<k}\sum_{j\neq i,j\neq k}H_{ik}(\lambda_i,\lambda_k)e_{n-1}^{\bar{a}_{ij},\bar{a}_{ik},\bar{a}_{kj}}(A)\\
		&-\sum_{i<k}\sum_{j\neq i,j\neq k}(\lambda_k-\lambda_j)(\lambda_i-\lambda_j)H_{ik}(\lambda_i,\lambda_k)e_{n-2}^{\bar{a}_{ij},\bar{a}_{ik},\bar{a}_{kj}}(A)\,.
	\end{split}
\end{equation}
We next briefly discuss where (4.3) in \cite{GM} is used. First of all, in Proposition 4.2 of \cite{GM},  (4.3) must be replaced by \eqref{eq_appendix_GM_corr_4.3}.
Moreover, (4.3) in \cite{GM} is used in the the proof of Proposition 4.3 in order to derive (4.6) in \cite{GM}.
There, one considers the case that the stopping time $\tau_n=\inf\{t>0\colon V_n(t)>0\}$ is positive with positive probability.
For all $\omega\in\{\tau_n>0\}$ one then has that the finite variation part $\int_0^tD_n(\omega,s)\,ds$ of $V_n(\omega,t)$ must vanish for all $t<\tau_n(\omega)$.
Hence, one also has $D_n(\omega,t)=0$ for all $t<\tau_n(\omega)$.
By  (4.2) in \cite{GM} and by \eqref{eq_appendix_GM_corr_4.3} above, we obtain the following identity which replaces (4.6) in \cite{GM}:
\begin{equation}\label{eq_appendix_GM_corr_4.6}\tag{4.6'}
	\begin{split}
		&\sum_{i=1}^p\sigma_i^2(\lambda_i)\sum_{j\colon j\neq i}e_{n-1}^{{ij}}(A)
		+4\sum_{i<j}H_{ij}(\lambda_i,\lambda_j)e_{n-1}^{{ij}}(A)
		+2\sum_{i<k}\sum_{j\neq i,j\neq k}H_{ik}(\lambda_i,\lambda_k)e_{n-1}^{{ij},{ik},{jk}}(A)\\
		&=D_n=0\,,
	\end{split}
\end{equation}
where we used that $V_n=0$ implies that for all $j\neq i\neq k$ the terms 
$$
(\lambda_i-\lambda_j)e_{n-1}^{{ij}}(A)\,,\;\;(\lambda_i-\lambda_j)(\lambda_i-\lambda_k)e_{n-2}^{{ij},{ik}}(A)\,,\;\;
(\lambda_k-\lambda_j)(\lambda_i-\lambda_j)H_{ik}(\lambda_i,\lambda_k)e_{n-2}^{{ij},{ik},{kj}}(A)
$$
are equal to $0$.  It can be now easily checked  that the proof of Proposition 4.3 still works with \eqref{eq_appendix_GM_corr_4.6} instead of  (4.6).\\

We now return to the proof of Theorem \ref{lem_HP_exist_unique}. We  there have  to analyze where the non-negativity assumption on the $H_{i,j}$ is applied in \cite{GM};
recall that we here are interested in  $H_{ij}(x,y)=2(1+xy)$ for $i,j=1,\dots,N$ with $i\neq j$.
There are two places where this assumption is needed in \cite{GM}:
 \begin{enumerate}
\item[\rm{(1)}] In the proof of Proposition 3.5 of  \cite{GM}:
		There the implication
		$$
			\sigma_i^2(x)+\sigma_j^2(x)+H_{ij}(x,x)=0\quad\Rightarrow\quad H_{i,j}(x,x)=0
		$$
		is used where in the Hua-Pickrell setting we have $\sigma_i(x)=2\sqrt{(1+x^2)/\beta}$ for $i=1,\dots,N$.
		Clearly, the sets
                $$G_{ij}:=\{x\in\mathbb{R}\colon\sigma_i^2(x)+\sigma_j^2(x)+H_{i,j}(x,x)=0\}, \quad i,j\in\{1,\dots,N\}$$
                are empty here  which yields that  Proposition 3.5 of  \cite{GM} holds in our setting.
   \item[\rm{(2)}]             In the proof of Proposition 4.3  of  \cite{GM}:
		There,  the non-negativity assumption  enters in the context of Eq.~(4.6)  of  \cite{GM}.
		However, here Eq.~(4.6)  of \cite{GM} must be 
replaced  by \eqref{eq_appendix_GM_corr_4.6}.
Using $4\lambda_i\lambda_j=(\lambda_i+\lambda_j)^2-(\lambda_i-\lambda_j)^2$ and $(\lambda_i-\lambda_j)^2e_{n-1}^{\bar{a}_{ij}}(A)=0=(\lambda_i-\lambda_j)^2e_{n-1}^{\bar{a}_{ij},\bar{a}_{ik},\bar{a}_{jk}}(A)$ whenever $V_n=e_n(A)=0$, we now have
		\begin{align*}
			0=D_n
			=&\sum_{i=1}^p\sigma_i^2(\lambda_i)\sum_{j\colon j\neq i}e_{n-1}^{\bar{a}_{ij}}(A)
				+2\sum_{i<j}\left(4+(\lambda_i+\lambda_k)^2\right)e_{n-1}^{\bar{a}_{ij}}(A)\\
				&+\sum_{i<k}\sum_{j\neq i,j\neq k}\left(4+(\lambda_i+\lambda_k)^2\right)e_{n-1}^{\bar{a}_{ij},\bar{a}_{ik},\bar{a}_{jk}}(A)\,.
		\end{align*}
instead 	of equation (4.6) in \cite{GM}. 	Note that all terms are non-negative.
		Thus, even  when $H=H_{ij}$ is not non-negative,  the proof
                of Proposition 4.3 of \cite{GM} is still applicable.
 \end{enumerate}
 
	In summary, by the proof of Lemma 4.2 in \cite{As2}, we can apply Theorem 2.2 of \cite{GM} in our setting, 
	and  all arguments carry over to $\beta\in[1,\infty]$ with minor modifications as claimed.

\end{document}